\newtheorem{lemma}{Lemma}[section]
\newtheorem*{theorem*}{Theorem}
\newtheorem{definition}[lemma]{Definition}
\newtheorem{proposition}[lemma]{Proposition}
\newtheorem*{proposition*}{Proposition}
\newtheorem{example}[lemma]{Example}
\newtheorem{corollary}[lemma]{Corollary}
\newtheorem{remark}[lemma]{Remark}
\newtheorem{notation}[lemma]{Notation}
\newtheorem{question}{Question}
\newtheorem{result}{Theorem}
\DeclareMathOperator{\lcm}{lcm}
\newcommand{\QQ}{\mathbb{Q}}
\newcommand{\RR}{\mathbb{R}}
\newcommand{\ZZ}{\mathbb{Z}}
\newcommand{\NN}{\mathbb{N}}
\newcommand{\inv}{^{-1}}
\newcommand{\comp}{\circ}
\newcommand{\abs}[1]{\left|#1\right|}
\newcommand{\norm}[1]{\left|\left|#1\right|\right|}
\newcommand{\set}[1]{\left\{#1\right\}}
\newcommand{\cat}[1]{\mathscr{#1}}
\newcommand{\mf}[1]{\mathfrak{#1}}
\newcommand{\mc}[1]{\mathcal{#1}}
\newcommand{\without}[1]{\backslash\{#1\}} 
\newcommand{\normal}{\vartriangleleft}
\newcommand{\conj}{\mathrm{Conj}}
\newcommand{\rf}{\mathrm{Rf}}
\newcommand{\cl}{\mathrm{Cl}}
\newcommand{\lelangle}{\left\langle}
\newcommand{\rerangle}{\right\rangle}
\newcommand{\llangle}{\langle\hspace{-2.5pt}\langle}
\newcommand{\rrangle}{\rangle\hspace{-2.5pt}\rangle}
\newcommand{\change}[1]{}
\title{Residual finiteness properties of some of Halls groups.}
\author{Lukas Vandeputte\thanks{The author  kindly acknowledges the support by the group of science Engineering and Technology at KU Leuven Campus Kulak.}}
\begin{document}
	\maketitle
	\begin{abstract}
		In this article we study a class of central extensions of $\ZZ\wr\ZZ$, as first described by Hall.
		On the one hand, we consider groups of this type with cyclic centre, our construction yields a rich class of groups. In particular we obtain a group that is conjugacy separable with solvable word problem but unsolvable conjugacy problem, we obtain a group with large conjugacy separability growth but small conjugator length function and residual finiteness growth, and we also obtain both a class of groups that for most functions $f:\NN\rightarrow\NN$ larger then $n^3$, contain a group $G$ such that the conjugator length of $G$ is given by $f$, as well as a group where the conjugator length is superlinear but subquadratic.
		
		On the other hand we also consider a different group with larger centre. This is the first example of a group where the residual finiteness growth is faster than any polynomial but slower than any exponential.
	\end{abstract}
	\section{Introduction}
	We study a class of central extensions of $\ZZ\wr\ZZ$. These groups where first constructed by Hall in \cite{hall1954finiteness} to demonstrate that a certain variety of $2$-generated groups has uncountably many non-isomorphic groups. These groups are all central quotients of a $2$-generated $3$-step solvable group $G_0$ with centre $\bigoplus_\NN\ZZ$. This large central subgroup allows us to encode a lot of information by taking a central quotient.
	
	We mainly study these groups through the lens of residual-finiteness and conjugacy-separability.
	Remember that a group $G$ is \textbf{residually finite} if for any non-trivial element $g\in G$, there exists a finite index normal subgroup $N$ such that $gN\neq N$ and that a group is \textbf{conjugacy separable} if for any pair of non-conjugate elements $g_1,g_2\in G$, there exists a finite index normal subgroup $N$ such that $g_1N$ and $g_2N$ are non-conjugate in $\frac{G}{N}$. A classical way to motivate these notions, is that if a finitely-presented group is residually finite or conjugacy separable, then that group has solvable word problem or solvable conjugacy problem respectively. This is done by McKinsey's algorithm, which enumerates all finite quotients and checks triviality of an element or conjugacy of these elements in those quotients.
	
	Notice that the condition of being finitely presented guarantees that it is possible to enumerate these quotients, and thus for these algorithms to work. For instance in \cite{meskin1974finitely} the author constructs a finitely generated recursively presented residually finite group with unsolvable word problem.
	
	A first class of quotients we consider are those with cyclic centre. These groups are then parametrised by a function $f:\NN\rightarrow \NN$. We characteristics of our groups are then determined by the growth and number theory of this sequence, leading to a rich class of groups. In particular we obtain criteria for when these groups are residually finite and conjugacy separable. In particular we obtain.
	\begin{result}
		There exist a finitely generated, recursively presented, conjugacy separable group with solvable word problem but unsolvable conjugacy problem.
	\end{result}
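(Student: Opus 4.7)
The plan is to realise the required example as a central quotient $G = G_0/N$ for a carefully chosen recursive subgroup $N$ of the centre $Z := \bigoplus_\NN \ZZ$ of Hall's group $G_0$. The group $G_0$ admits a recursive presentation with decidable word problem, and for any recursive $N \leq Z$, appending an enumeration of generators of $N$ as further relators produces a recursive presentation of $G$; since $N$ is central, the word problem of $G$ reduces to deciding equality in $G_0$ combined with deciding membership in $N$, both decidable. Consequently $G$ is $2$-generated, recursively presented and has solvable word problem whenever $N \leq Z$ is recursive. I will choose $N$ to be infinitely generated so that $G$ is not finitely presented — this is essential, since a finitely presented conjugacy separable group would have solvable conjugacy problem via McKinsey's algorithm.

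The next step is to invoke the conjugacy separability criterion for central quotients of $G_0$ established earlier in the paper. This criterion cuts out an admissible family of subgroups $N \leq Z$, characterised by a profinite-closure-type condition, and contains many recursive subgroups; the task becomes to identify one in this family producing an unsolvable conjugacy problem.

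To force undecidability of the conjugacy problem I would select an element $g \in G_0$ with $[G_0, g] \subseteq Z$, so that the map $h \mapsto [h, g]$ is a homomorphism $G_0 \to Z$ with image $C_g := [G_0, g]$. For any $z \in Z$, the elements $g$ and $gz$ are conjugate in $G$ if and only if $z \in C_g + N$. Therefore the conjugacy problem of $G$ is at least as hard as membership in $C_g + N$ inside $Z$, and if $N$ is recursive while $C_g + N$ is r.e.\ but not recursive, we obtain a reduction from a fixed r.e.\ non-recursive set $S \subseteq \NN$ to the conjugacy problem: pick a computable sequence $z_n \in Z$ with $z_n \in C_g + N$ iff $n \in S$, and query conjugacy of $g$ with $gz_n$.

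The main obstacle is the simultaneous realisation of (i) $N$ recursive, (ii) $N$ satisfying the conjugacy separability criterion, and (iii) $C_g + N$ non-recursive. Conditions (i) and (iii) are compatible because the sum of two recursive subgroups of $\bigoplus_\NN \ZZ$ is in general only r.e., so one can construct $N$ by interleaving an enumeration of its generators with an enumeration of $S$, timed so that exactly for $n \in S$ does $z_n$ eventually become expressible as a sum in $C_g + N$. The delicate point, and where the substantive verification lies, is reconciling this timing-sensitive construction with the profinite-closure condition coming from the conjugacy separability criterion; I expect this to be achievable by choosing an appropriate basis of $Z$ on which to encode $S$ and by including at each stage enough additional generators of $N$ to maintain the closure condition, but the check must be carried out explicitly once the precise form of the criterion is invoked.
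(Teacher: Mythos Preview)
Your outline captures the broad strategy the paper uses---a central quotient of $G_0$ with a recursive kernel, conjugacy separability secured by an explicit criterion, and undecidability fed in through the centre---but there is a concrete error and the substantive work is deferred.

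The condition ``$[G_0,g]\subseteq Z$'' is too strong. Since $G_0/C_0\cong\ZZ\wr\ZZ$ has trivial centre, one has $Z_2(G_0)=Z(G_0)=C_0$; hence any $g$ with $[G_0,g]\subseteq C_0$ is already central, so $C_g=0$ and conjugacy of $g$ with $gz$ collapses to the decidable question $z\in N$. What you actually need is to restrict the commutator map to those $h$ lying over the centraliser of $gC$ in $\ZZ\wr\ZZ$. The paper does exactly this, but it does \emph{not} use a single $g$: it works with the family $g_1^{(i)}=a_0a_1\cdots a_{3^i-1}$ and, for each $i$, computes the subgroup $\{[h,g_1^{(i)}]:h\in N\}\leq\langle c_1\rangle$ (Lemma~\ref{prop:comutWithSeries} and Lemma~\ref{prop:TranslateConjToMembership}). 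The undecidable set is encoded not by timing generators of $N$ against an enumeration, but by building a computable function $d$ so that this subgroup, for varying $i$, detects whether a given prime lies in the image of a fixed computable prime-valued function with undecidable range.

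Relatedly, the paper does not prove a conjugacy separability criterion for arbitrary central quotients of $G_0$. It first passes to the subclass $G_d$ with \emph{cyclic} centre ($c_i=c_1^{d(i)}$), for which the criterion is the clean statement ``$d\bmod q$ is periodic for every $q$'' (Proposition~\ref{prop:conjsep}); this is what makes the compatibility check you flag tractable. Your proposal of keeping a large centre and engineering $N$ by interleaving against an r.e.\ set would have to confront a criterion you have not stated, and (as you acknowledge) the reconciliation is precisely where the difficulty lies. In short: fix the second-centre slip, specialise to cyclic centre as the paper does, and replace the single-$g$ reduction by the family $g_1^{(i)}$; then the periodicity criterion and the explicit $d$ of \S\ref{sec:function} carry the argument through.
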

	
	This answers a question by Ashot Minasyan in the negative \cite{minasyan2010conjugacy}.
	Note that a very recent preprint by Darbinyan and Rauzy answers the same question \cite{darbinyan2025robust}. Our construction is different and our results were found independently.
	
	A more recent angle to residual finiteness comes through the lens of effective separability. For a residually finite group $G$ that is finitely generated by $S$, in \cite{bou2010quantifying} the author constructs the \textbf{residual finiteness growth function} $\rf_{G,S}:\NN\rightarrow\NN$ which maps an integer $n$ to $$
	\rf_{G,S}(n)=\max_{\substack{g\in G\without 1\\\norm{g}_S\leq n}}\left(\min\{Q\mid \varphi:G\rightarrow Q,\abs{Q}<\infty,\varphi(g)\neq 1\}\right).
	$$
	This function is known for a number of groups. A recent survey on this can be found in \cite{dere2022survey}. For linear groups, among others, it is known that the residual finiteness growth is bounded above by a polynomial. On the other hand by \cite{bradford2024spectrum} it is known that most reasonable functions larger than the exponential can up to a natural equivalence relation be obtained as $\rf_{G,S}$ for some finitely generated group $G$.
	There is also a conjugacy analogue to $\rf$, given by the \textbf{conjugacy separability function} $\conj_{G,S}:\NN\rightarrow\NN$ which maps an integer $n$ to$$
	\conj_{G,S}(n)=\max_{\substack{g_1,g_2\in G\\\norm{g_1}_S,\norm{g_2}_S\leq n\\g_1\not\sim g_2}}\left(\min\{Q\mid \varphi:G\rightarrow Q,\abs{Q}<\infty,\varphi(g_1)\not\sim \varphi(g_2)\}\right).
	$$
	The knowledge on $\conj$ is more limited. Some results are again present in \cite{dere2022survey}. There is also an analogue of \cite{bradford2024spectrum} given in \cite{vandeputte2024spectrum}: given two functions $f_1\leq f_2$ that grow sufficiently fast, then there exists some group $G$ such that $\rf_{G,S}$ is given by $f_1$ and $\conj_{G,S}$ by $f_2$.
	
	These functions give rise to bounds to the time complexity of one half of McKinsey's algorithm. The other side of McKinsey's algorithm for the conjugacy problem works by enumerating all possible conjugates of one of the two group elements. The complexity of this side of the algorithm is determined by the \textbf{conjugator length function} $\cl_{G,S}:\NN\rightarrow\NN$ which maps an integer $n$ to$$
	\cl_{G,S}(n)=\max_{\substack{g_1,g_2\in G\\
	\norm{g_1}_S,\norm{g_2}_S\leq n\\
	g_1\sim g_2}}(\min\{\norm{h}_S\mid h\in G,hg_1h\inv=g_2\})
	$$
	
	Also this function has been studied on its own. A lot of groups have been shown to have at most linear conjugator length function such as word hyperbolic groups \cite{lysenok1989some}, mapping class groups\cite{tau2013linearly}, lamplighers\cite{sale2016conjugacy}, free-by-cyclic groups\cite{bridson2025conjugacy}, affine coxeter groups and split crystalographic groups\cite{rego2025conjugator}, but other functions appear as well, for instance all polynomials appear in the nilpotent groups\cite{bridson2025lengths}. It is also not difficult to construct groups with with larger conjugator length function: for instance if a group $G$ has solvable word problem but unsolvable conjugacy problem, then the $\cl_{G,S}$ grows faster than any computable function.
	
	One might expect that knowing $\rf$ and $\cl$ gives insight on $\conj$. For instance one might hope that given $g_1,g_2\in G$ non-conjugate of length $n$, if for all $h$ of length at most $\cl_{G,S}(n)$ one can separate $hg_1h\inv$ from $g_2$ in a finite quotient, this aids in finding a finite quotient separating the conjugacy classes of $g_1$ and $g_2$. Nevertheless we show that this is not the case. Indeed we construct a class of groups similar to the one of \Cref{prop:ExistsConjNotSolvable} that gives us the following:
	\begin{result}
			There exists a family of finitely generated recursively presented torsion-free 3-step solvable conjugacy separable groups $G_d$ such that:\begin{itemize}
				\item$\rf_{G_d}\prec \exp(n^\delta)$ for some integer $\delta$; 
				\item$\cl_{G_d}$ is polynomial; \item the conjugacy problem is polynomial time solvable; \item for any computable function $\Phi$, there exists some $d$ such that $\conj_{G_d}\succ \Phi$.
			\end{itemize}
	\end{result}
	
	Notice that in particular, this result gives a partial answer to \cite[Question 1.]{vandeputte2024spectrum} which asks about $\rf$ and $\conj$ in torsion-free groups. Notice furthermore that this result is a strengthening of \cite{vandeputte2025independence}. 
	
	We further demonstrate the control we have over the conjugator length with our construction by proving the following:
	\begin{result}
		Let $\Phi$ be a function such that either one of the following holds:
		\begin{enumerate}[label=(\alph*)]
			\item $\forall i\in \NN, \Phi(i)\geq i^3$ and $\Phi(3i)\geq 3\Phi(i)$;\\
			\item $\Phi(i)\geq (i)^{1+\log_3(2)+\log_3\log_3(i)}$.\\
		\end{enumerate}		
		Then there exists a torsion-free, finitely generated, 3-step solvable group $G$ such that $\cl_G\simeq \Phi$. Furthermore if $\Phi$ is computable then $G$ can be chosen to be recursively presented.
	\end{result}
	To our knowledge, we are the first to demonstrate such a large class of functions appears as $\cl_G$.
	
	Not only large functions appear as conjugator length, however. In \cite[Problem 1]{gillis2026conjugator}
	the authors ask if there are finitely generated groups $G$ with $\cl_G(n)$ strictly in between $n$ and $n^2$.
	We answer this in the positive in \Cref{prop:clIntermediateLower} and \Cref{prop:clIntermediateUpper} by constructing a group $G$ with $n^{\frac{4}{3}}\prec \cl_G(n)\prec n^{\frac{5}{3}}$.
	
	note that aforementioned \cite{gillis2026conjugator} obtains similar results to \Cref{prop:clSpectrum} but for finitely presented groups. This paper was released later then the first two versions of our own (the second version containing \Cref{prop:clSpectrum}). The methods between the two are completely different.
	
	In the final section, we shift our focus back to $\rf$.
	So far, there are no known examples of finitely generated groups where $\rf_{G,S}$ lies strictly in between the polynomials and the exponential functions.	
	By constructing another central quotient of $G_0$, we obtain\begin{result}
		There exists a finitely generated recursively presented group $G$ such that for any polynomial $f$, $\rf_{G,S}(n)\geq f(n)$ for $n$ sufficiently large, and such that $\rf_{G,S}\leq\exp(n^{\frac{1}{2}+\epsilon})$ for $n$ sufficiently large.
	\end{result}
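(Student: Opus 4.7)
The plan is to take $G = G_0/N$ where $N$ is a specific recursively enumerable central subgroup of $G_0$. Let $z_1, z_2, \ldots$ be the $\ZZ$-basis of $Z(G_0) = \bigoplus_\NN \ZZ$ coming from the iterated commutator description in Hall's construction, so that each $z_k$ has word length $O(k)$ in the generators $a, b$ of $G_0$. Fix a computable sequence of primes $(p_k)$ with $\log p_k \sim k^{1/2}$, say $p_k$ the smallest prime at least $\exp(k^{1/2})$, and set $N := \spn{p_k z_k : k \in \NN}$. Then $Z(G_0)/N \cong \bigoplus_k \ZZ/p_k\ZZ$ is residually finite, $N$ is recursively enumerable (hence $G$ is recursively presented), and $G$ is finitely generated by the images of $a, b$.

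For the super-polynomial lower bound on $\rf_{G,S}$, observe that the image $\bar z_k$ of $z_k$ in $G$ has word length $O(k)$; and in any finite quotient $G \twoheadrightarrow Q$ in which $\bar z_k$ is non-trivial, the order of $\bar z_k$ in $Q$ divides $p_k$ and is greater than $1$, so $p_k$ divides $|Q|$. Thus $\rf_{G,S}(O(k)) \geq p_k \sim \exp(k^{1/2})$, which dominates every polynomial, giving $\rf_{G,S}(n) \geq f(n)$ eventually for every polynomial $f$.

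For the upper bound, take $g \in G \without{1}$ with $\norm{g}_S \leq n$. Project $G$ onto $G_0/Z(G_0) \cong \ZZ \wr \ZZ$; since $\ZZ \wr \ZZ$ has polynomially bounded residual finiteness growth, the image of $g$ may be assumed trivial, so that $g$ represents a central element $\sum_k c_k \bar z_k$ of $R := \bigoplus_k \ZZ/p_k\ZZ$. The commutator structure of $G_0$ forces the support to lie in $\{k : k \leq O(n)\}$ and the coefficients to satisfy $|c_k| \leq \mathrm{poly}(n)$. Since $p_k$ dwarfs the coefficient bound for $k$ in the relevant range and $n$ sufficiently large, $g$ is non-trivial in $R$ iff some integer $c_{k_0} \neq 0$, and projecting $R$ onto $\ZZ/p_{k_0}\ZZ$ yields a finite quotient of $G$ of size at most $p_{O(n)} = \exp(O(n^{1/2})) \leq \exp(n^{1/2+\epsilon})$ in which $g$ survives.

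The main obstacle is the word-length bookkeeping in the central extension: showing that a central element of word length $n$ in $G_0$ can only involve $z_k$ for $k \leq O(n)$, with coefficients polynomial in $n$. This rigidity is a quantitative statement about the commutator calculus of the three-step solvable Hall group $G_0$, and will probably require setting up an explicit filtration (akin to the lower central series of the Fitting subgroup) to track how commutators of a given length can reach into the centre. Once this is in place, the arithmetic optimisation is straightforward, and the exponent $1/2$ appears transparently as a balance between the linear support bound $k \leq O(n)$ and the sub-linear growth rate $\log p_k \sim k^{1/2}$ chosen for the primes.
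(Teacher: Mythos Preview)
Your construction has a fatal gap: the group $G$ you define is not residually finite, so neither bound on $\rf_G$ even makes sense. The problem is the step ``projecting $R$ onto $\ZZ/p_{k_0}\ZZ$ yields a finite quotient of $G$''. Any finite quotient $Q$ of $G$ must kill some power $t^I$ of the shift generator; normality then forces $a_j = a_{j+I}$ in $Q$ for all $j$, and hence $z_k = [a_k,a_0] = [a_{k+I},a_0] = z_{k+I}$ in $Q$. The common image of $z_k, z_{k+I}, z_{k+2I}, \ldots$ in $Q$ therefore has order dividing $\gcd(p_k, p_{k+I}, p_{k+2I}, \ldots)$. Since your primes satisfy $p_j \to \infty$, for any fixed $I\geq 1$ some $p_{k+mI}$ exceeds (and is thus coprime to) $p_k$, so this gcd equals $1$. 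Hence every $\bar z_k$ dies in every finite quotient of $G$, and the whole centre lies in the finite residual.

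The paper avoids this by imposing torsion relations only at an extremely sparse set of indices: one sets $c_{d_i}^{q_i}=1$ where the $d_i$ grow like an exponential tower and each modulus $q_i$ is a product of primes drawn from a sliding window, so that consecutive $q_i$ share prime factors. All other $c_j$ keep infinite order. This lets one pass to finite quotients $G_{p,q}$ (with $t^{2q}=1$ and $a_i^p=c_i^p=1$) for which the handful of indices $d_i$ below the period collapse compatibly. The genuinely hard part of the upper bound is then not the commutator bookkeeping you flag---that is exactly \Cref{prop:support} and \Cref{prop:supportC}---but a polynomial/cyclotomic argument (\Cref{prop:centerreduct}) guaranteeing that a central element supported on $[1,n]$ survives reduction modulo $X^{\lcm(q,m_0)}-1$ for some odd prime $q \leq c\,n^{1/2+\epsilon}$. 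The exponent $\tfrac12$ arises because one needs $\sum_{q \leq c\,n^{1/2+\epsilon}} (q-1) > n$ so that the product of the cyclotomic polynomials $\phi_q$ has degree exceeding $n$; it is not a balance between support size and prime growth as in your sketch.
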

		
	In doing so we also demonstrate:
	\begin{result}
		There exists some constant $c>0$ such that $$
		\rf_{\ZZ\wr\ZZ,S}(n)\leq cn^2.
		$$
	\end{result}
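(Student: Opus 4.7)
The plan is to separate a nontrivial element $g \in \ZZ \wr \ZZ$ of word length at most $n$ by a finite quotient of size $O(n^2)$. I would identify the base $B = \bigoplus_\ZZ \ZZ$ with the group ring $\ZZ[t,t^{-1}]$, so that the top $\langle a \rangle$ acts by multiplication by $t$, and write $g = p(t)\cdot a^k$ with $p(t) = \sum_i n_i t^i \in \ZZ[t,t^{-1}]$ and $k \in \ZZ$. Standard word length estimates in wreath products yield $|k| \leq n$, $\sum_i |n_i| \leq n$, and $\operatorname{supp}(p) \subset [-n,n]$; in particular the coefficient sup norm of $p$ is at most $n$ and its support diameter is at most $2n$.

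If $k \neq 0$ I would compose the abelianization $\ZZ \wr \ZZ \to \ZZ^2$ with projection onto the $a$-coordinate and reduction modulo $|k|+1$: this separates $g$ in a cyclic group of order at most $n+1$, a purely linear contribution. The main case is $k = 0$ and $p \neq 0$. By Bertrand's postulate I pick a prime $\ell$ with $2n+1 < \ell \leq 4n+2$, so $\ell = O(n)$. Since $\ell > n \geq |n_i|$ for all $i$, the reduction $\bar p \in \mathbb{F}_\ell[t,t^{-1}]$ is nonzero, and after multiplying by $t^n$ it becomes a polynomial in $\mathbb{F}_\ell[t]$ of degree at most $2n < \ell - 1$; hence $\bar p$ has at most $2n$ zeros in $\mathbb{F}_\ell^\times$, so some $\alpha \in \mathbb{F}_\ell^\times$ satisfies $\bar p(\alpha) \neq 0$.

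Setting $s = \operatorname{ord}(\alpha)$, which divides $\ell - 1$, the evaluation $t \mapsto \alpha$ is a ring map $\ZZ[t,t^{-1}] \to \mathbb{F}_\ell$ equivariant for the shift on the source and multiplication by $\alpha$ on the target, and the latter action factors through $\ZZ/s$. This induces a surjection $\varphi : \ZZ \wr \ZZ \twoheadrightarrow \mathbb{F}_\ell \rtimes_\alpha \ZZ/s$ onto a group of order $\ell s \leq \ell(\ell-1) = O(n^2)$, in which $\varphi(g) = (p(\alpha),0) \neq (0,0)$. Combining the two cases then gives $\rf_{\ZZ\wr\ZZ, S}(n) \leq cn^2$.

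The only step that is not a routine size count is the word length estimate controlling $p$: without the bound $\operatorname{supp}(p) \subset [-n, n]$ together with $|n_i| \leq n$ I could not simultaneously guarantee that $\bar p$ is nonzero in $\mathbb{F}_\ell[t,t^{-1}]$ and that it admits a non-root in $\mathbb{F}_\ell^\times$ for some prime $\ell = O(n)$. Once the support and sup norm of $p$ are both bounded by $O(n)$, the construction of $\varphi$ and the bookkeeping of $|\mathbb{F}_\ell \rtimes_\alpha \ZZ/s| \leq \ell(\ell-1)$ are straightforward.
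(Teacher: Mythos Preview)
Your argument is correct, and it is genuinely more elementary than the paper's. The paper fixes a real quadratic field $K$ with a unit $u\in\mc O_K^\times$ of infinite order, evaluates the base polynomial at some power $u^k$ (using a pigeonhole on the $2n+3$ distinct powers against at most $2n+1$ roots), bounds the field norm of the resulting algebraic integer by $\exp(O(n))$, and then invokes an effective Chebotarev-type statement (\Cref{prop:RfZChebotarev}) to find a prime ideal $I$ of norm $O(n)$ not containing $f(u^k)$; the separating quotient is $\mc O_K/I\rtimes \ZZ/r\ZZ$ with $r$ the order of $u$ modulo $I$, of size $O(n)\cdot O(n)=O(n^2)$.

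You bypass the number field and Chebotarev entirely: a single rational prime $\ell\in(2n+1,4n+2]$ from Bertrand already kills no coefficient (since $|n_i|\le n<\ell$) and leaves enough room in $\mathbb F_\ell^\times$ for a non-root of a degree-$\le 2n$ polynomial, giving the same shape of quotient $\mathbb F_\ell\rtimes\ZZ/s\ZZ$ with the same $O(n^2)$ bound. The paper's route is more flexible in principle (one could try to exploit density of split primes more cleverly), but for the stated bound your approach is strictly simpler and equally sharp. The word-length control you flag as the one non-routine ingredient is exactly the content of \Cref{prop:support} in the paper.
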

	This matches the best known upper bound for $\frac{\ZZ}{n\ZZ}\wr\ZZ$ and answers \cite[Question 3.25.]{dere2022survey}.

	The remainder of this article is structured as follows: In \cref{sec:hall} we start by describing the general class of groups, as well as some general results of these groups. Here we also demonstrate our \Cref{prop:rflamplighter}. In \cref{sec:conjugacy}, we describe in more detail the subclass of groups with cyclic centre. We specify further in $3$ different ways to find a group satisfying \Cref{prop:ExistsConjNotSolvable}, a class of groups satisfying \Cref{prop:cLengthCojSep}, and a class of groups satisfying \Cref{prop:clSpectrum}.
	In \cref{sec:intermediate}, we describe a different group, this time with larger centre, and demonstrate it satisfies \Cref{prop:ExistsIntermediate}.

	\section{Preliminaries}
	We start by fixing the convention that given $g_1,g_2\in G$, $[g_1,g_2]=g_1g_2g_1\inv g_2\inv.$
	A group is \textbf{residually finite} if for any non-trivial element $g$, there exists a finite quotient ${\varphi:G\rightarrow Q}$ such that $\varphi(g)\neq1$. Related to this a group is called \textbf{conjugacy separable} if for any pair of non-conjugate elements $g_1,g_2$, there exists a finite quotient such that $\varphi(g_1)$ and $\varphi(g_2)$ are non-conjugate. The word problem in a finitely presented residually finite group is solvable\cite{Malcev1985}, as is the conjugacy problem in a finitely presented conjugacy separable group. 
	A lot of groups have been shown to be residually finite or conjugacy separable. Important for our purposes is that finitely generated Linear groups (in particular $\ZZ\wr\ZZ$) have been shown to be residually finite\cite{malcev1940isomorphic}, and that finitely generated virtually nilpotent groups and $\ZZ\wr\ZZ$ are known to be conjugacy separable\cite{formanek1976conjugate,remeslennikov1971finite}.
	
	The degree to which a group $G$, finitely generated by $S$, is residually finite can be measured through the function$$ \rf_{G,S}(n)=\max_{\substack{g\in G\without 1\\\norm{g}_S\leq n}}\left(\min\{Q\mid \varphi:G\rightarrow Q,\abs{Q}<\infty,\varphi(g)\neq 1\}\right).$$
	
	Here $\norm{\_}_S$ denotes the word norm on $G$, defined as $$
	\norm{g}_S=\min\{k\in\NN\mid \exists s_1,s_2,\cdots s_k\in S\cup S\inv:\:s_1s_2\cdots s_k=g\}
	.$$ The dependence of $\rf_{G,S}$ on $S$ is only up to a constant factor:
	\begin{lemma}
		Let $G$ be a residually finite group and let $S$ and $T$ be finite generating sets of $G$. Then there exists a constant $C$ such that$$
		\rf_{G,S}(n)\leq C\rf_{G,T}(Cn)
		$$
		and vice versa.
	\end{lemma}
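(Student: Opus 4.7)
The plan is to reduce the statement to the standard fact that the word norms $\|\cdot\|_S$ and $\|\cdot\|_T$ are bi-Lipschitz equivalent, and to observe that the outer $\rf$ depends on its argument only through the set of elements of length at most $n$.

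First, I would set $K = \max_{s\in S}\|s\|_T$, which is finite since $S$ is finite. Writing any $g\in G$ with $\|g\|_S\leq n$ as a product of at most $n$ elements of $S\cup S^{-1}$ and then expanding each factor as a $T$-word of length at most $K$, one obtains $\|g\|_T\leq Kn$. Hence there is an inclusion
\[
\{g\in G\setminus\{1\}:\|g\|_S\leq n\}\;\subseteq\;\{g\in G\setminus\{1\}:\|g\|_T\leq Kn\}.
\]

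Second, since the inner quantity $\min\{|Q|:\varphi:G\to Q,\,|Q|<\infty,\,\varphi(g)\neq1\}$ in the definition of $\rf$ depends only on the element $g$ and not on the generating set, taking the maximum over a smaller set yields a smaller value. Applied to the inclusion above this gives
\[
\rf_{G,S}(n)\;\leq\;\rf_{G,T}(Kn),
\]
and choosing $C=\max(K,1)$ (or simply $C=K$) proves one direction; the reverse direction is identical with the roles of $S$ and $T$ swapped, using $K' = \max_{t\in T}\|t\|_S$.

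There is no real obstacle here: the whole content is that finitely many generators can each be written as a bounded-length word in the other generating set, so lengths change only by a multiplicative constant, and $\rf_{G,S}$ is monotone in its range of maximization. The looser form $C\,\rf_{G,T}(Cn)$ in the statement simply accommodates the constant on either side; the sharper bound $\rf_{G,S}(n)\leq \rf_{G,T}(Kn)$ is in fact what the argument produces.
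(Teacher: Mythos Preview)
Your argument is correct and is exactly the standard one: bi-Lipschitz equivalence of word norms gives an inclusion of balls, and since the inner quantity in $\rf$ is independent of the generating set, taking the maximum over a larger set only increases the value. The paper itself does not spell this out but simply refers to \cite[Lemma 1.1]{bou2010quantifying}, whose proof is precisely the one you wrote.
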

	\begin{proof}
		See for instance \cite[Lemma 1.1]{bou2010quantifying}.
	\end{proof}
	When $f,g:\NN_{>0}\rightarrow\NN_{>0}$ are increasing functions, we denote $f\succ g$ if $f(n)\geq Cg(Cn)$ for some constant $C$, and we denote $f\simeq g$ if $f\prec g$ and $f\succ g$.
	The above lemma thus gives us that $\rf_{G,S}$ is up to $\simeq$ independent of $S$. We may thus denote $\rf_{G}$.
	
	It has been shown in \cite{bou2010quantifying} that $\rf_{\ZZ}\simeq \ln(n)$. The `$\prec$' implied in this statement follows from the $K=\QQ$ case of the following statement.	
	
	\begin{proposition}\label{prop:RfZChebotarev}
		Let $\mc O_K$ be the ring of integers of a Galois number field. There exists some constant $C$ such that whenever $x\in \mc O_K$ is non-trivial, then there exists an ideal $I\normal\mc O_K$ of prime norm $p$ such that $p\leq C\log(C\norm{x}_K)$ where $\norm{x}_K=[\mc O_K:x\mc O_K]$ denotes the field norm of $x$ in $K$.
	\end{proposition}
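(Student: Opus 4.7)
The plan is to find, for each nontrivial $x$, a rational prime $p = O_K(\log \norm{x}_K)$ which splits completely in $K$ and does not divide $\norm{x}_K$. Once such a $p$ is produced, any prime ideal $\mathfrak p$ of $\mc O_K$ lying over $p$ serves as the required $I$: since $K/\QQ$ is Galois and $p$ splits completely, $\mathfrak p$ has residue degree $1$, so $N(\mathfrak p)=p$ is a rational prime; and since $p \nmid \norm{x}_K = N((x))$, $\mathfrak p$ cannot divide $(x)$, so $x \notin \mathfrak p$.

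To produce $p$ I would use a Chebyshev-style comparison. On one side, the product of the distinct rational primes dividing $\norm{x}_K$ is at most $\norm{x}_K$, giving
\[
\sum_{p \mid \norm{x}_K} \log p \leq \log \norm{x}_K.
\]
On the other side, applying the Chebotarev density theorem to the trivial conjugacy class of $\operatorname{Gal}(K/\QQ)$, in combination with Landau's prime ideal theorem and partial summation, yields constants $\kappa_K > 0$ and $N_0 = N_0(K)$ with
\[
\sum_{\substack{p \leq N \\ p \text{ splits completely in } K}} \log p \geq \kappa_K N \qquad (N \geq N_0).
\]
Taking $N := \max\{N_0,\; \lceil 2\kappa_K\inv(\log \norm{x}_K + 1)\rceil\}$ makes the second sum strictly larger than the first, so some completely-split rational prime $p \leq N$ fails to divide $\norm{x}_K$. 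Absorbing $\kappa_K$, $N_0$, and the finitely many cases with $\norm{x}_K$ below a $K$-dependent threshold (in particular the unit case $\norm{x}_K=1$) into a single constant $C$ then gives $p \leq C\log(C\norm{x}_K)$.

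The main obstacle is the lower bound on $\sum_{p \leq N,\, p \text{ splits in } K} \log p$. Since the proposition asks only for the existence of \emph{some} constant $C$, no explicit effective Chebotarev (Lagarias--Odlyzko style) is needed; the classical density statement together with the prime ideal theorem already suffices. It is worth flagging that the argument genuinely requires the weighted sum over $\log p$ and not merely the cardinality $\omega(\norm{x}_K)$: the cheap bound $\omega(\norm{x}_K) \leq \log_2 \norm{x}_K$ combined with $\pi_{\mathrm{split}}(N) \gtrsim N/\log N$ would only yield $N = O(\log \norm{x}_K \cdot \log\log \norm{x}_K)$, one $\log\log$ factor weaker than claimed, so the Chebyshev-style use of $\theta$-like sums is essential.
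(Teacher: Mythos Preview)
Your proposal is correct and follows the same route as the paper: use Chebotarev to guarantee a positive density of rational primes with a degree-one prime above them in $\mc O_K$, then pigeonhole against the primes dividing $\norm{x}_K$ to find one that misses $x$. The paper states the counting asymptotic $\Theta(n/\log n)$ and then outsources the pigeonhole step to \cite[Proposition 4.7]{dere2024residual}; you instead spell that step out explicitly via the weighted comparison $\sum_{p\mid\norm{x}_K}\log p\leq\log\norm{x}_K$ versus $\sum_{p\leq N,\;p\text{ split}}\log p\gtrsim N$, and your remark that the unweighted count alone would cost a $\log\log$ factor is exactly the content hidden in the cited proposition.
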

	\begin{proof}
		By Chebotarevs density theorem as stated in \cite[Theorem 3.4.]{serre2012lectures}, the set of primes $p$ less than $n$ such that there exists some ideal in $\mc O_K$ of norm $p$, is asymptotically of size $\Theta(\frac{n}{\log(n)})$. It follows for instance by \cite[Proposition 4.7.]{dere2024residual} that for any $y\in\ZZ_{>0}$, there exists such a prime $p$ such that $p\nmid y$ and $p\leq C\log(Cy)$. The result follows when $y=\norm{x}_K$ and $I$ is a prime ideal of order $p$.
	\end{proof}
	Similar to how the residual finiteness growth measures how residually finite a group is, the conjugacy separability is quantised using the \textbf{conjugacy separability growth} given by$$
	\conj_{G,S}(n)=\max_{\substack{g_1,g_2\in G\\\norm{g_1}_S,\norm{g_2}_S\leq n\\g_1\not\sim g_2}}\left(\min\{Q\mid \varphi:G\rightarrow Q,\abs{Q}<\infty,\varphi(g_1)\not\sim \varphi(g_2)\}\right).
	$$
	Again this function only depends on $S$ up to the equivalence relation $\simeq$.
	
	Similarly \textbf{conjugator length function}, given by $\cl_{G,S}:\NN\rightarrow\NN$ mapping an integer $n$ to$$
	\cl_{G,S}(n)=\max_{\substack{g_1,g_2\in G\\
			\norm{g_1}_S,\norm{g_2}_S\leq n\\
			g_1\sim g_2}}(\min\{\norm{h}_S\mid h\in G,hg_1h\inv=g_2\})
	$$
	also only depends on $S$ up to $\simeq$.

	\section{Halls group}\label{sec:hall}
	We start of by constructing the group of \cite[Theorem 7]{hall1954finiteness}.
	Let $N_0$ be the $2$-step nilpotent group with presentation$$
	\lelangle \{a_i,c_i\mid i\in\ZZ\}\bigg\vert \begin{matrix}[a_i,a_j]=c_{i-j}&i,j\in\ZZ\\ [c_i,a_j]=[c_i,c_j]=1&i,j\in\ZZ
		\end{matrix}\rerangle.
	$$
	Notice that by taking $i=j$, it follows that $c_0$ is the identity. By swapping $i$ and $j$, we obtain that $c_i$ is the inverse of $c_{-i}$. There is a natural action of $\ZZ$ on $N$, given by $\rho_i(a_j)=a_{i+j}$ and $\rho_i(c_j)=c_j$. One checks easily that this action is well defined. We can thus consider $G_0$ the semi direct product $N_0\rtimes_\rho \ZZ$. 
	
	The presentation associated to this semi direct product is then given by 
	$$
	G_0=\lelangle \{t,a_i,c_i\vert i\in\ZZ\}\Bigg\vert \begin{matrix} ta_it\inv=a_{i+1}&i\in\ZZ\\
		tc_it\inv=c_i&i\in\ZZ\\
		[a_i,a_j]=c_{i-j}&i,j\in\ZZ\\ [c_i,a_j]=[c_i,c_j]=1&i,j\in\ZZ
	\end{matrix}\rerangle.
	$$
	Using relations of the first and third kind, one sees that this group is generated by $S=\{t,a_0\}$. 
	Let $C_0$ be the subgroup generated by the generators $c_i$, notice that $C_0$ is not only central in $D_0$, but also in $G_0$.
	Notice that $C_0$ is the free abelian group on $\{c_i\mid i\in \ZZ_{>0}\}$.
	
	The groups we consider in this paper are obtained by taking quotients of $G_0$, with kernel in $C_0$. The fact that $C_0$ is a large central subgroup, gives us a lot of flexibility to encode information into this quotient. We will see that depending on which quotient $G$ of $G_0$ we take, the properties of $G_0$ change drastically.
	\begin{definition}
		Let $G_0,C_0$ be as above, denote then $\cat H$ the class of central quotients of $G_0$, that is:$$
		\cat H=\left\{\frac{G_0}{K}\big\vert K\normal C_0\right\}
		$$ 
	\end{definition}
	Throughout the paper, we will use $t,a_i,c_i$ as elements of $G$ whenever $G\in\cat H$. These are then to be regarded as the images of the elements of $G_0$ with the same name. When confusion is possible, we clarify to which group they belong by denoting $a_i\in G$ or $a_i\in G_0$.
	\begin{notation}
		Let $G\in \cat H$ be a central quotient of $G_0$, denote then $N$ and $C$ as the images of $N_0$ and $C_0$ respectively in that quotient.
	\end{notation}
	\begin{example}
		The wreath product $\ZZ\wr\ZZ\in \cat H$. Here $C$ is the trivial group and $N=\bigoplus_{\ZZ}\ZZ$. 
		Notice that $\ZZ\wr\ZZ$ is  a quotient of all other groups $G\in \cat H$.
	\end{example}
	
	The remainder of this section will be spend on general properties of $G_0$ or $\cat H$, then in \cref{sec:conjugacy} and \cref{sec:intermediate} we will specialize to specific quotients to demonstrate \Cref{prop:ExistsConjNotSolvable} and \Cref{prop:ExistsIntermediate} respectively.
	
	We begin with studying the word norm on $G_0$.
	\begin{definition}
		Let $S_n\subset N_0$ be defined as$$
		S_{n}=\set{a_i\mid i\in [-n,n]}.
		$$
	\end{definition}
	None of the sets $S_n$ generate $N_0$, nevertheless we have the following.
	\begin{lemma}\label{prop:support}
		Let $g\in N_0<G_0$ and suppose that $\norm{g}_S\leq n$, then $g\in \langle S_{n}\rangle$ and $\norm{g}_{S_n}\leq n$.
	\end{lemma}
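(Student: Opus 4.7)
The plan is to exploit the semidirect product structure $G_0 = N_0 \rtimes \ZZ$ to push all of the $t$-letters in a short word for $g$ over to one side, leaving behind conjugated copies of $a_0^{\pm 1}$ whose subscripts we can control.

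Concretely, I would write $g = s_1 s_2 \cdots s_k$ with $k \leq n$ and each $s_j \in \{t^{\pm 1}, a_0^{\pm 1}\}$. For each $j$, let
\[
\sigma_j = \#\{i \leq j : s_i = t\} - \#\{i \leq j : s_i = t^{-1}\},
\]
with $\sigma_0 = 0$. Since $g \in N_0$ lies in the kernel of the projection $G_0 \to \ZZ$, we have $\sigma_k = 0$. Expanding the product in the semidirect product using the identity $t^m a_0^{\epsilon} t^{-m} = a_m^{\epsilon}$, one obtains in $N_0$
\[
g = \prod_{\substack{j=1 \\ s_j \in \{a_0^{\pm 1}\}}}^{k} a_{\sigma_{j-1}}^{\epsilon_j},
\]
where $\epsilon_j \in \{\pm 1\}$ is the sign of the exponent of $s_j$, and the product is taken in the order of increasing $j$.

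Now the two conclusions follow at once: each subscript satisfies $|\sigma_{j-1}| \leq j - 1 \leq n - 1$, so every factor $a_{\sigma_{j-1}}^{\epsilon_j}$ lies in $S_n \cup S_n^{-1}$, witnessing $g \in \langle S_n \rangle$; and the total number of factors is at most the number of $a_0^{\pm 1}$-letters appearing in $s_1 \cdots s_k$, which is at most $k \leq n$, giving $\norm{g}_{S_n} \leq n$.

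No real obstacle arises: the only thing to verify carefully is the rewriting identity in the semidirect product, which is a direct induction on the number of letters using $(n_1, \tau_1)(n_2, \tau_2) = (n_1 \rho_{\tau_1}(n_2), \tau_1 + \tau_2)$ together with $\rho_{\sigma_{j-1}}(a_0^{\epsilon_j}) = a_{\sigma_{j-1}}^{\epsilon_j}$.
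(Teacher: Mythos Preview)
Your proof is correct and follows essentially the same approach as the paper: both push the $t$-letters past the $a_0$-letters via the conjugation relation $t^m a_0 t^{-m} = a_m$, then bound the resulting subscripts by the partial sums of $t$-exponents and the word length by the number of $a$-letters. The only cosmetic difference is that the paper first groups consecutive identical generators into blocks $t^{n_i} a^{m_i}$ before rewriting, whereas you treat each letter individually; your version is arguably cleaner and avoids a subscript/exponent typo present in the paper's write-up.
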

	\begin{proof}
		We may rewrite $g=t^{n_1}a^{m_1}t^{n_2}a^{m_2}t^{n_3}\cdots t^{n_k}a^{m_k}t^{n_{k+1}}$ such that $n_{k+1}+\sum_{i=1}^k \abs{n_i}+\abs{m_i}\leq n$. We may rewrite this as$$
		t^{n_1}a^{m_1}t^{-n_1}\cdot t^{n_1+n_2}a^{m_2}t^{-n_1-n_2}\cdots t^{n_1+\cdots+n_k}a^{m_k}t^{-n_1-\cdots-n_k}\cdot t^{n_1+\cdots+n_k+n_{k+1}}
		$$
		As $g\in N$, we have that $n_1+\cdots+n_k+n_{k+1}=0$, the above is thus equal to$$
		a_{n_1}^{m_1}a_{n_1+n_2}^{m_2}\cdots a_{n_1+\cdots+n_k}^{m_k}
		$$\change{n and m indices needed to be swapped}
		The partial sums $n_1+n_2+\cdots n_i$ are in absolute value bounded above by $n$ thus $g\in \langle S_{n}\rangle$. Furthermore $\sum \abs{m_i}$ is also bounded by $m$ thus $\norm{g}_{S_n}\leq n$.
	\end{proof}
	In the context of lamplighter groups, there is a partial converse to this.
	\begin{lemma}\label{prop:supportInverse}
		Let $g\in \langle S_n\rangle<\ZZ\wr\ZZ$, and suppose $\norm{g}_{S_n}\leq m$, then $\norm{g}_S\leq 4n+m$.
	\end{lemma}
	\begin{proof}
		Let $g$ be given by $\sum_{i=-n}^na_i^{e_i}$ such that $\sum \abs{e_i}\leq m$, then $g$ is also given by $$
		g=t^{-m}\left(\prod_{i=-m}^{m-1}a_0^{e_i}t\right)a_0^{e_m}t^{-m}.
		$$
	\end{proof}
	Note that the above might fail if $G\neq \ZZ\wr\ZZ$.
	We have an analogous result to \Cref{prop:support} for the subgroup $C$.
	\begin{definition}
		Let $S_{c,n}\subset C_0$ be defined as$$
		S_{c,n}=\set{c_i\mid i\in [1,n]}.
		$$
	\end{definition}
	\begin{corollary}\label{prop:supportC}
		Let $g\in \langle \set{c_i\mid i\in \ZZ}\rangle$ and suppose that $\norm{g}\leq n$, then $g\in \lelangle S_{c,n}\rerangle$ and $\norm{g}_{S_{c,n}}<n^2$.
	\end{corollary}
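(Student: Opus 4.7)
The plan is to apply \Cref{prop:support} to write $g$ as a short word in the $a_i$'s, and then convert that word to a product of $c_\ell$'s using the central commutator identity $a_i a_j = c_{i-j}\, a_j a_i$. Concretely, \Cref{prop:support} gives an expression
\[
g = a_{p_1}^{e_1} a_{p_2}^{e_2} \cdots a_{p_k}^{e_k}
\]
with $k \leq n$ and each $e_j \in \{\pm 1\}$, where the indices $p_j$ are the partial sums of the $t$-exponents in some length-$n$ word for $g$ in $\{t,a_0\}$, as in the proof of \Cref{prop:support}.

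The crucial point is bounding $\max_j p_j - \min_j p_j$ by $n/2$. Since $g\in N_0$, the total $t$-exponent vanishes, so the partial-sum trajectory $0,p_1,p_2,\ldots,p_k,0$ is a \emph{closed} walk in $\ZZ$ with total variation $V \leq n$. For any closed walk, the range is at most $V/2$: writing $M = \max_j p_j \geq 0$ and $m = \min_j p_j \leq 0$, the walk must reach $M$ from $0$, then later reach $m$ (or vice versa), then return to $0$, which costs total variation at least $M + (M-m) + |m| = 2(M-m)$. Hence every $|p_i - p_j| \leq n/2$.

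Now I would sort the expression $a_{p_1}^{e_1} \cdots a_{p_k}^{e_k}$ via adjacent transpositions, using $a_i a_j = c_{i-j}\, a_j a_i$ (valid because the $c_\ell$'s are central in $G_0$). Each swap contributes a factor $c_{p_i - p_j}^{\pm 1}$, and the range bound together with the identifications $c_0 = 1$ and $c_{-\ell} = c_\ell^{-1}$ places every such factor in $S_{c,n} \cup S_{c,n}^{-1}$. Once fully sorted, the remaining $a$-part is the image of $g$ in the abelianization $N_0/C_0$, which is trivial because $g \in C_0$; hence $g$ equals precisely the product of $c$-factors collected during sorting. The number of adjacent transpositions in a bubble sort of $k$ items is at most $\binom{k}{2} \leq n(n-1)/2 < n^2$, which yields $\norm{g}_{S_{c,n}} < n^2$.

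The main obstacle is exactly this range bound $\leq n/2$. If one only used the crude estimate $|p_j|\leq n$ from \Cref{prop:support} directly, the indices $|p_i - p_j|$ could be as large as $2n$ and one would obtain only $g \in \langle S_{c,2n}\rangle$; it is the fact that $g \in N_0$ forces the walk to close up that improves $2n$ to $n$.
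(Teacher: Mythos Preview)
Your proof is correct, and for the support claim it is in fact more careful than the paper's own argument. The paper disposes of the first part in one line by asserting $\langle S_n\rangle\cap C_0=\langle S_{c,n}\rangle$, but this equality is off by a factor of two: the commutators $[a_i,a_j]$ with $i,j\in[-n,n]$ generate $\langle S_{c,2n}\rangle$, since for instance $c_{2n}=[a_n,a_{-n}]\in\langle S_n\rangle\cap C_0$. Your closed-walk observation --- that a closed walk of total variation $V$ has range at most $V/2$ --- is exactly what is needed to bring the support back down to $[1,n]$ (indeed to $[1,\lfloor n/2\rfloor]$), and you correctly flag this as the point where the naive reading of \Cref{prop:support} would only give $\langle S_{c,2n}\rangle$.

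For the norm bound the paper simply appeals to the general distortion estimate for finitely generated nilpotent groups from Guivarc'h. Your bubble-sort count is an elementary, self-contained substitute: in a $2$-step nilpotent group each adjacent swap of $a_i^{\pm1}$ and $a_j^{\pm1}$ emits a single factor $c_{i-j}^{\pm1}$, so at most $\binom{k}{2}\le\binom{n}{2}<n^2$ factors arise, and the sorted $a$-part vanishes because $g\in C_0$. Both routes work; yours avoids the external reference, the paper's is shorter to state.
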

	\begin{proof}
		Notice that if $g\in \langle \set{c_i\mid i\in \ZZ}\rangle$ then we also have $g\in\langle \set{a_i\mid i\in \ZZ}\rangle$. The first part of the statement follows by \Cref{prop:support} as $\lelangle S_n\rerangle\cap C_0=\lelangle S_{c,n}\rerangle$. The second part follows by well known results on the word norm on finitely generated nilpotent groups, as can be found for instance in the proof of \cite[Theorem II.1]{guivarch1973croissance}.
	\end{proof}
	Note that \Cref{prop:support} and \Cref{prop:supportC} are both preserved under taking central quotients of $G_0$. These statements thus still hold after replacing $G_0,N_0$ and $C_0$ with $G,N$ and $C$ respectively for some $G\in\cat H$.

	To talk about residual finiteness of central quotients of $G_0$, we often need to reduce to the residual finiteness of $\ZZ\wr\ZZ$. This group is linear and thus known to be residually finite. We provide an upper bound on the residual finiteness growth. Notice that $\ZZ\wr\ZZ$ is a central quotient of $G_0$ where the kernel is the entire centre. We thus denote $a_i$ the projections of the generators $a_i\in G_0$.
	\setcounter{result}{4}
	\begin{result}\label{prop:rflamplighter}
		$\rf_{\ZZ\wr\ZZ}\prec n^2$.
	\end{result}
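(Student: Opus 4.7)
The plan is to exploit the identification $\ZZ\wr\ZZ = B \rtimes \ZZ$, where $B = \bigoplus_{i\in\ZZ} \ZZ$ is identified with $\ZZ[x,x^{-1}]$ via $a_i \leftrightarrow x^i$ (so $t$ acts by multiplication by $x$), and produce small finite quotients by reducing $B$ modulo suitable cyclotomic polynomials. Given nontrivial $g \in \ZZ\wr\ZZ$ with $\norm{g}_S \leq n$, write $g = f \cdot t^k$ with $f \in B$ and $k \in \ZZ$. If the image of $g$ in the abelianization $\ZZ^2$, namely $(\sum_i f(i), k)$, is nonzero, then this image has a nonzero coordinate of absolute value at most $n$, and by the $K = \QQ$ case of \Cref{prop:RfZChebotarev} a prime $p = O(\log n)$ detects it, giving a quotient of size $O(\log n)$ in which $g$ survives.

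So I may assume $k = 0$, $\sum_i f(i) = 0$, but $f \neq 0$. By \Cref{prop:support} (which applies equally to $\ZZ\wr\ZZ \in \cat H$), $\operatorname{supp}(f) \subset [-n,n]$, so the Laurent polynomial $h(x) := \sum_i f(i)x^i$ satisfies $x^n h(x) \in \ZZ[x]$ with degree at most $2n$. The first key step is to find $e \leq C\sqrt{n}$ such that the $e$-th cyclotomic polynomial $\Phi_e$ does not divide $h(x)$. Indeed, if $\Phi_e \mid h$ for every $e \leq D$, then since the $\Phi_e$ are pairwise coprime and coprime to $x$ in $\ZZ[x]$, their product divides $x^n h(x)$, forcing $\sum_{e=1}^D \phi(e) \leq 2n$; combined with the classical estimate $\sum_{e \leq D}\phi(e) = \Theta(D^2)$, this yields $D = O(\sqrt n)$.

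Fixing such $e$, set $K = \QQ(\zeta_e)$ so that $h(\zeta_e) \in \mc O_K = \ZZ[\zeta_e]$ is nonzero. Each Galois conjugate $\sigma(\zeta_e)$ is a root of unity, so $|\sigma(h(\zeta_e))| = |h(\sigma(\zeta_e))| \leq \norm{h}_1 \leq n$, giving the field norm bound $\norm{h(\zeta_e)}_K \leq n^{\phi(e)}$. \Cref{prop:RfZChebotarev} then produces a prime ideal $\mathfrak{p} \normal \mc O_K$ of norm $O(\phi(e) \log n)$ with $h(\zeta_e) \notin \mathfrak{p}$. The ring surjection $\ZZ[x,x^{-1}] \twoheadrightarrow \mc O_K/\mathfrak{p}$, $x \mapsto \zeta_e$, is $\ZZ$-equivariant, and the image of $\ZZ$ in $\operatorname{Aut}(\mc O_K/\mathfrak{p})$ has finite order $e' \mid e$, inducing a surjection $\ZZ\wr\ZZ \twoheadrightarrow (\mc O_K/\mathfrak{p}) \rtimes \ZZ/e'$ of size at most $N(\mathfrak{p}) \cdot e = O(e^2 \log n) = O(n \log n)$, in which $g$ maps to $h(\zeta_e) \bmod \mathfrak{p} \neq 0$. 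This is comfortably within $\prec n^2$.

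The main technical step is the cyclotomic degree count bounding the smallest $e$ with $\Phi_e \nmid h$ by $O(\sqrt n)$; once this is in hand, the Chebotarev-style bound of \Cref{prop:RfZChebotarev} applied to $K = \QQ(\zeta_e)$, together with the standard semidirect product construction, does the rest.
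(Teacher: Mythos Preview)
Your overall strategy is natural, and in fact would give a slightly better bound than the paper's if it worked. The difficulty is in the step where you invoke \Cref{prop:RfZChebotarev} for $K=\QQ(\zeta_e)$. That proposition fixes a number field $K$ and then produces a constant $C=C(K)$; the density of degree-one primes in $\mc O_K$, and hence the constant, genuinely depends on $K$. In your argument $e$ (and thus $K$) varies with $g$, and $e$ can be as large as $C\sqrt{n}$. So the bound $N(\mathfrak p)=O(\phi(e)\log n)$ you write down is not justified: the implicit constant depends on $e$. Concretely, a degree-one prime in $\ZZ[\zeta_e]$ is a rational prime $p\equiv 1\pmod e$, and unconditionally one has no control on how many such primes occur below a threshold that is only polynomial in $e$ (Linnik gives one prime below $e^L$, but you need enough of them to avoid all prime divisors of the norm). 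If instead you take an arbitrary prime ideal above a small rational prime $p$, its norm is $p^{f}$ with $f$ the multiplicative order of $p$ modulo $e$, which can be as large as $\phi(e)$, destroying the bound.

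The paper sidesteps this by fixing a single real quadratic field $K$ (say $\QQ(\sqrt 2)$) once and for all, so that the Chebotarev constant is genuinely absolute, and then varies the \emph{evaluation point} rather than the field: writing $g$ as a polynomial $f$ of degree $\le 2n$, one of the $2n+3$ units $u^k$, $|k|\le n+1$, is not a root. The price is that $|f(u^k)|$ is only bounded by $e^{cn}$ (rather than by $n$ as for your roots of unity), so the prime ideal has norm $O(n)$ and the final quotient has size $O(n^2)$. Your approach, with $|h(\zeta_e)|\le n$ on every embedding, would give $O(n\log n)$ \emph{if} one had a uniform (e.g.\ GRH-conditional) version of \Cref{prop:RfZChebotarev}; as stated, however, it does not go through.
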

	\begin{proof}
		Fix $K$ some real degree $2$ number field and let $\mc O_K$ be its ring of integers. (For instance let $K=\QQ[\sqrt{2}]$ and $\mc O_K=\ZZ[\sqrt{2}]$). Fix $u$ a unit in $\mc O_K$ of infinite order.
		
		Let $g\in \ZZ\wr\ZZ\without 1$ with $\norm{n}_S\leq n$. By \cite[Theorem 2.2.]{bou2010quantifying}, we know that $\rf_{\ZZ}\prec \log(n)$, we may thus assume that $g$ lies in $\lelangle\{a_i\mid i\in\ZZ\}\rerangle$ and thus by \Cref{prop:support}, we have that $g\in \langle S_{n}\rangle$ and that $\norm{g}_{S_{n}}\leq n$. We may thus write $g=\sum_{\abs{i}\leq n} \gamma_i a_i$ with $\abs{\gamma_i}\leq n$. Denote $f(X)$ the polynomial $\sum \gamma_iX^{i+n}$ of degree $2n+1$. As this polynomial is not identically $0$, we can thus find some $k$ with $\abs{k}\leq n+1$ such that $f(u^k)\neq 0$. We can bound the complex norm $\abs{f(u^k)}$ by $(2n+1)n \abs{u}^{n+1}$ which is bounded above by $\exp(cn)$ for some constant $c$, independent from $n$. Similarly, if we denote $\overline u$ the Galois conjugate of $u$, then we again obtain a bound $\abs{f(\overline u^k)}\leq \exp(c'n)$. The field norm $\norm{f(u^k)}_{\mc O_k}$ is the integer given by $f(u^k)f(\overline u^k)\leq \exp((c+c')n)$.
			
		By \Cref{prop:RfZChebotarev}, we might find some prime ideal $I$ of $\mc O_k$ such that $f(u^k)\notin I$ and such that the index of $I$ is at most $c''\log(\norm{f(u^k)}_{\mc O_k})\leq c''(c+c')n$.
		Let $r$ be the multiplicative order of $u$ in $\frac{\mc O_K}{I}$. Notice that $r$ is bounded above by $[\mc O_K:I]$.
		Let $\frac{\mc O_K}{I}\rtimes\frac{\ZZ}{r\ZZ}$ denote the semi direct product where the right group acts on the left by multiplication with $u^k$ and consider the morphism $\pi:\ZZ\wr\ZZ\rightarrow \frac{\mc O_K}{I}\rtimes\frac{\ZZ}{r\ZZ}$
		Where $(0,n)$ gets mapped to $(0,n+r\ZZ)$ and where $a_i$ gets mapped to $(u^{ki},0)$.
		Notice that the image of $g$ under this morphism is given by $(u^{-kn}f(u^k)+I,0)$, which is a non-trivial element. The result follows as the order of $\frac{\mc O_K}{I}\rtimes\frac{\ZZ}{r\ZZ}$ is bounded by $(c''(c+c'))^2n^2$.
	\end{proof}

	To study conjugacy in the groups in $\cat H$, we need to study centralisers in $\ZZ\wr\ZZ$.
	\begin{lemma}\label{prop:Centraliser}
		Let $g\in \ZZ\wr\ZZ\backslash N$, then $g$ has cyclic centraliser. On the other hand if $g\in N$, then $g$ is centralised by  $N$.
	\end{lemma}
	\begin{proof}
		The first case follows for instance from \cite[Corollary 4]{Meldrum1979centralisers}. If $g\in N$, then clearly every other element of $N$ commutes with $g$. On the other hand, again by \cite[Corollary 4]{Meldrum1979centralisers}, no other element commutes with $g$.
	\end{proof}
	\begin{corollary}\label{prop:conjInSubgroup}
		Let $g_1,g_2\in G$ such that $g_1C=g_2C$. If $g_1,g_2\notin N$, then $g_1\sim g_2$ if and only if $g_1=g_2$. On the other hand if $g_1,g_1\in N$, then $g_1\sim g_2$ if and only if $g_2g_1\inv\in [g_1,N]$.
	\end{corollary}
	\begin{proof}
		For the first case, Let $g_0$ be such that $g_0C$ generates $Z(g_1C)$. As the centraliser of $g_1C$ is cyclic by \Cref{prop:Centraliser} and as $g_1C$ centralises itself, there exists some $k\in\ZZ$ and $c\in C$ such that $g_1=g_0^kc$. As $c$ is central, it follows that $g_0$ and $g_1$ commute. Suppose that there exists some $h$ such that $hg_1h\inv=g_2$, then as $g_1C=g_2C$, $hC$ must centralise $g_1C$. It follows that $h$ must lie in the subgroup generated by $C$ and $g_0$. In particular, as both of these elements centralise $g_1$, it follows that $h$ itself centralises $g_1$ and thus that $g_2=hg_1h\inv=g_1$.
		
		For the second case, assume $g_1\in N$ and let $h$ be such that $hg_1h=g_2$. Again $hC$ must centralise $g_1C$ and thus by \Cref{prop:Centraliser}, $h\in N$. Rewriting we obtain $hg_1h\inv g_1\inv=g_2g_1\inv$ and thus $g_2g_1\inv\in[N,g_1]$, or equivalently as $[N,g_1]$ is a subgroup, $g_2g_1\inv\in[g_1,N]$.
	\end{proof}
	\begin{lemma}\label{prop:describeCentralizer}
		Let $g=(h,n)\in \ZZ\wr\frac{\ZZ}{I\ZZ}$, then the centraliser $Z(g)$ is generated by 
		$$P_n=\{
		a_ia_{i+n}a_{i+2n}\cdots a_{i-n}\mid i\in \frac{\ZZ}{I\ZZ}
		\}$$
		and by $g'=(h',n')$ where $in'=n$ for some $i\in\ZZ$ and where $(g')^ig\inv\in P_{n'}$.
	\end{lemma}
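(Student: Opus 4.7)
The plan is to analyse $Z(g)$ via the standard short exact sequence
$$
1\to B\to \ZZ\wr\frac{\ZZ}{I\ZZ}\xrightarrow{\pi}\frac{\ZZ}{I\ZZ}\to 1,
$$
where $B$ denotes the base group and $\pi$ projects onto the shift coordinate. First I would compute the intersection $Z(g)\cap B$ directly; then I would choose a preimage in $Z(g)$ of a generator of $\pi(Z(g))$ to produce the extra generator $g'$; finally I would check the three consistency properties listed in the statement.

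For the intersection, write a general element of $B$ as $k=\sum_i\gamma_ia_i$. Expanding the relation $(k,0)(h,n)=(h,n)(k,0)$ in the wreath product reduces to the single condition $k=n\cdot k$, i.e.\ the coefficient sequence $(\gamma_i)$ is invariant under the shift by $n$ on $\frac{\ZZ}{I\ZZ}$. Such invariant sequences form a free $\ZZ$-module freely generated by the orbit sums of the shift-by-$n$ action on $\{a_i\}$, which are exactly the elements of $P_n$. Hence $Z(g)\cap B=\langle P_n\rangle$.

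Next, the image $\pi(Z(g))\subseteq\frac{\ZZ}{I\ZZ}$ is a cyclic subgroup containing $n$, so it equals $\langle n'\rangle$ for some $n'$ with $n=in'$. Choose any lift $g'=(h',n')\in Z(g)$ of $n'$. The standard short-exact-sequence argument then yields generation: for $x\in Z(g)$, write $\pi(x)=jn'$ so that $x(g')^{-j}\in Z(g)\cap B=\langle P_n\rangle$, giving $Z(g)=\langle P_n,g'\rangle$. Finally, $(g')^ig\inv$ projects to $in'-n=0$ and so lies in $B$; moreover it commutes with $g'$ (because $g$ does, by hypothesis), so applying the first step with $g'$ in place of $g$ places $(g')^ig\inv$ inside $\langle P_{n'}\rangle$ as claimed.

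The main obstacle is mostly bookkeeping --- keeping the wreath product multiplication straight and treating indices modulo $I$ consistently when writing out $k=n\cdot k$; once $Z(g)\cap B$ is pinned down, everything else reduces to the routine short-exact-sequence argument. One minor interpretive point: the statement literally says $(g')^ig\inv\in P_{n'}$, whereas the computation yields membership in the subgroup generated by $P_{n'}$, which I read as the intended meaning.
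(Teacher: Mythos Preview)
Your proposal is correct and follows essentially the same route as the paper: compute $Z(g)\cap B=\langle P_n\rangle$ via shift-by-$n$ invariance, pick a lift $g'$ of a generator of $\pi(Z(g))$, deduce generation from the short exact sequence, and then observe $(g')^ig\inv\in Z(g')\cap B=\langle P_{n'}\rangle$. Your interpretive remark about $P_{n'}$ versus $\langle P_{n'}\rangle$ is also on point; the paper uses the latter in its applications.
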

		\begin{proof}
		If $(h',0)\in Z(g)$, then it is obvious that $(h',0)$ lies in the subgroup generated by $P_n$.
		Let $\pi$ denote the natural projection onto $\frac{\ZZ}{I\ZZ}$. Notice that $\pi(Z(g))$ is a subgroup of $\frac{\ZZ}{I\ZZ}$ containing ${\pi(g)=n}$. It follows that there exists some $n'\mid n$ generating $\pi(Z(g))$ and thus some ${g'=(h',n')\in Z(g)}$. Let $\tilde g=(\tilde h,\tilde n)\in Z(g)$. Multiplying with some appropriate power of $g'$ returns an element in $Z(g)\cap \ZZ^n=\langle P_n\rangle$. It thus follows that $P_n\cup \{g'\}$ generates $Z(g)$. All that is left to show is that $g'$ is of the form above. Notice that $g,g'\in Z(g')$ and thus lies $(g')^ng\inv\in Z(g')\cap \ZZ^d=\langle P_{n'}\rangle$.
	\end{proof}

	Not all groups in $\cat H$ are conjugacy separable. We give a \namecref{prop:cSepPeriodicCentre} that aids us in showing when they are.
	\begin{definition}
		Let $G\in \cat H$, we say $G$ satisfies \textbf{property $\mathbf{(P)}$} if there exists some integer $P$ such that $\forall i,j\in\ZZ, [a_i,a_j]=[a_i,a_{j+P}]$.
	\end{definition}
	Notice that if $G$ satisfies property $(P)$, then $C$ is finitely generated by $\{[a_0,a_i]\mid i\in [0,P-1]\}$.
	\begin{proposition}\label{prop:cSepPeriodicCentre}
		Let $G\in\cat H$ satisfy property $(P)$, then $G$ is conjugacy separable.
	\end{proposition}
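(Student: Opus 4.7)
The plan is to reduce to the conjugacy separability of $G/C \cong \ZZ\wr\ZZ$, exploiting that property $(P)$ forces $C$ to be finitely generated abelian.

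Let $g_1,g_2\in G$ be non-conjugate. If their images $\bar g_1,\bar g_2$ in $G/C$ are non-conjugate, then conjugacy separability of $\ZZ\wr\ZZ$ yields a finite quotient of $G/C$ (and hence of $G$) separating them. Otherwise, after replacing $g_2$ by an appropriate conjugate in $G$, I may assume $g_2 = g_1c$ for some $c\in C$.

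Since $C$ is central in $G$, the assignment $\bar y \mapsto yg_1y^{-1}g_1^{-1}$ descends to a well-defined homomorphism $\psi\colon Z\to C$, where $Z=Z_{G/C}(\bar g_1)$ denotes the centralizer of $\bar g_1$ in $G/C$; one checks that $g_1\sim g_1c$ in $G$ if and only if $c\in\psi(Z)$, so by hypothesis $c\notin\psi(Z)$. Property $(P)$ implies that $C$ is finitely generated abelian, so every subgroup of $C$ is closed in its profinite topology, and there exists a finite-index subgroup $K\leq C$ with $c\notin\psi(Z)+K$; this $K$ is automatically normal in $G$ by centrality of $C$.

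It remains to find a finite-index normal subgroup $L\normal G$ with $L\cap C\subseteq K$ and whose image $\pi(L)\normal G/C$ is chosen so that the centralizer of the image of $\bar g_1$ in $(G/C)/\pi(L)$ coincides with the image of $Z$. Given such an $L$, suppose $y\in G$ satisfies $yg_1y^{-1}\equiv g_1c\pmod L$; projecting to $(G/C)/\pi(L)$ forces $\bar y\in Z\cdot\pi(L)$, so I may write $y=z_0w_0c_0$ with $z_0$ lifting an element of $Z$, $w_0\in L$ and $c_0\in C$. The commutator identity $[z_0w_0,g_1]=z_0[w_0,g_1]z_0^{-1}\cdot[z_0,g_1]$ combined with normality of $L$ then yields $[y,g_1]\equiv\psi(\bar z_0)\pmod L$, hence $\psi(\bar z_0)-c\in L\cap C\subseteq K$, contradicting $c\notin\psi(Z)+K$. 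So $g_1$ and $g_1c$ are non-conjugate in the finite quotient $G/L$.

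The main obstacle I expect is the construction of $\pi(L)$ with the required centralizer-matching property; this amounts to showing that the centralizer $Z$ is closed in the profinite topology of $\ZZ\wr\ZZ$. I would verify this by an explicit case analysis of centralizers in $\ZZ\wr\ZZ$, splitting on whether the $t$-exponent of $\bar g_1$ vanishes, together with the quantitative residual finiteness of $\ZZ\wr\ZZ$ established in \Cref{prop:rflamplighter}. Once $\pi(L)$ is in hand, residual finiteness of $G$ lets me intersect its preimage with a further finite-index normal subgroup of $G$ to guarantee $L\cap C\subseteq K$, completing the proof.
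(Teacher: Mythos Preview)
Your opening reduction---replacing $g_2$ by $g_1c$ with $c\in C\setminus\psi(Z)$, and using that property~$(P)$ makes $C$ finitely generated abelian to find a finite-index $K\le C$ with $c\notin\psi(Z)+K$---is exactly how the paper begins. The problem is your final paragraph, which contains two genuine gaps. First, residual finiteness of $G$ alone does not furnish a finite-index normal $M\trianglelefteq G$ with $M\cap C\subseteq K$: residual finiteness separates individual elements, not cosets of a subgroup, so you would need to know that the profinite topology of $G$ induces the full profinite topology on $C$, and you have not established this. Second, and more seriously, the centralizer-matching property is not stable under shrinking $\pi(L)$. If $\pi(L')\subseteq\pi(L)$ and the preimage in $G/C$ of the centralizer of $\bar g_1\pi(L)$ equals $Z\cdot\pi(L)$, the corresponding preimage for $\pi(L')$ need only lie inside $Z\cdot\pi(L)$; any $y=z\ell$ with $z\in Z$, $\ell\in\pi(L)\setminus\pi(L')$ and $[\ell,\bar g_1]\in\pi(L')$ shows it can be strictly larger than $Z\cdot\pi(L')$. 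So after you intersect to force $L\cap C\subseteq K$, the deduction $\bar y\in Z\cdot\pi(L)$ in your penultimate paragraph is no longer available, and the argument collapses.

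The paper sidesteps both problems by refusing to separate the two tasks. It passes to a single explicit intermediate quotient $G/(K_0K)$, where $K_0\le C$ is your $K$ and $K=\langle a_ia_{i+I}^{-1}\mid i\in\ZZ\rangle$ for an integer $I$ chosen divisible by $P$, by $[C:K_0]$, and by parameters recording the support and $t$-exponent of $g_1$. One checks $K\cap C=1$, so $C$ survives as $C/K_0$; and modulo $K$ one has $a_i=a_{i+I}$ and $t^I$ central on $N$, so $G/(K_0K)$ is virtually nilpotent. The paper then verifies \emph{directly}, via the case split on the $t$-exponent that you yourself anticipated and using the explicit description of centralizers in $\ZZ\wr(\ZZ/I\ZZ)$ from \Cref{prop:describeCentralizer}, that $g_1$ and $g_1c$ remain non-conjugate in $G/(K_0K)$. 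Formanek's conjugacy separability of virtually nilpotent groups then produces the finite quotient. In effect the explicit choice of $I$ and the hands-on commutator computations do simultaneously what you tried to do in two independent steps.
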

	\begin{proof}
		Let $g_1,g_2\in G$ be non-conjugate. As $\ZZ\wr\ZZ$ is conjugacy separable, we may assume that $g_2=g_1c$ for some $c\in C$.
		Denote $Z_2<G$ the subgroup of elements $\{g\in G\mid gC\in Z(g_1C)\}$.
		Notice that $g_2\nsim g_1$ precisely when $c\notin \{[g,g_1]\mid g\in Z_2\}$. Notice that this set also forms a subgroup, which we denote $[Z_2,g_1]$.
		As $C$ is finitely generated abelian, it is subgroup-separable, that is we have a subgroup $K_0<C$ of finite index $q$, such that $c\notin [Z_2,g_1]+K_0$.
		
		As $G=\tilde N\rtimes \ZZ$, we might express elements the elements $g_1,g_2$ as $(h_1,n_1)$ and $(h_2,n_2)$. By the previous assumption, $n_1=n_2$ and $h_1C=h_2C$. Let furthermore $\delta$ be minimal such that $$h_1C\in\lelangle\{a_{-\delta},a_{1-\delta},\cdots,a_\delta\}\rerangle.$$
		Let $I=Pq(4\delta+2)\max\{1,\abs{n_t}\}$ and let $K<G$ be generated by $\{a_ia_{i+I}\inv\mid i\in \ZZ\}$. Notice that $K$ is central in $N$ and normal in $G$. We will show that $g_1K_0K$ and $g_2K_0K$ are non-conjugate.
		
		We need a case distinction depending on $n_t$.
		
		First assume $n_t$ is non-trivial. Notice that if $gg_1g\inv\in g_2K_0K$, then must $gg_1g\inv CK=g_2CK$.
		By \Cref{prop:describeCentralizer}, we may thus write $g$ as ${g'}^{j}pc$ where $c\in C, p\in P_{n_1}$ and where $g'$ is such that ${g'}^ig\inv\in P_{\frac{n_1}{i}}$ for some $i$. First notice that as $c$ is central, $cg_1c\inv=g_1$ and we may thus assume that $c=1$. Next we show that $p$ and $g_1$ commute. For this it suffices to show that $p$ commutes with both $(h_1,0)$ and $(1,n_1)$. For the first, we demonstrate that $P_{n_1}$ is central in $\frac{G}{K_0K}$. Indeed, let $a_j\in N$ and $a_ia_{i+n_1}a_{i+2n_1}\cdots a_{i+I-n_1}\in P_n$, then by bi-linearity of the commutator, we have\begin{align*}
			[a_j,a_ia_{i+n}a_{i+2n}\cdots a_{i+I-n}]K_0&=\prod_{k=0}^{Pq(4\delta+2)-1}[a_j,a_{i+kn}]K_0\\
			&=\prod_{l=0}^{P(4\delta+2)-1}\prod_{k=0}^{q-1}[a_j,a_{i+l+Pk(4\delta+2)n}]K_0\\
			&=\prod_{l=0}^{P(4\delta+2)-1}[a_j,a_{i+l}]^qK_0\\
			&=1.
		\end{align*}
		For the second, observe that \begin{align*}&[(1,n_1),a_ia_{i+n_1}a_{i+2n_1}\cdots a_{i+I-n_1}]K_0K\\
			=&a_{i+n_1}a_{i+2n_1}\cdots a_{i+I-n_1}\:a_{i+I}\:a_{i+I-n_1}\inv a_{i+I-2n_1}\inv\cdots a_{i}\inv K_0K\\
			=&[a_{i+n_1}a_{i+2n_1}\cdots a_{i+I-n_1},a_{i+I}]K_0K\\
			=&[a_{i+n_1}a_{i+2n_1}\cdots a_{i+I-n_1}a_{i+I},a_{i+I}]K_0K\\
		\end{align*}
		the last of which we had just shown to be central.
		Now to show that $g'$ and $g_1$ commute, observe that $[g',g_1]K_0K=[g',p(g')^i]K_0K=[g',p]K_0K$ where $p\in P_{n'}$. This commutator vanishes for the same reason as outlined above.
		We have thus shown that if $g$ is as above, then $gK_0K$ commutes with $g_1K_0K$ and thus is $gg_1g\inv\not\in g_2K_0K$, concluding the case where $n_1\neq 0$.
		
		Now for the case where $n_1=0$.
		Let again $g$ be such that $gg_1g\inv K_0K=g_2K_0K$, we may write $g=(h,n')$. We first show that $n'\cong 0\mod I$.
		Indeed, suppose the above equality holds, then surely $gg_1g\inv CK=g_2CK=g_1CK$ and thus by \Cref{prop:describeCentralizer}, we have that $h_1\in P_{n'}$.
		However, suppose that $n'\ncong 0\mod I$, then, as $I>2(2\delta+1)$, we may find some integer $j\in [-\delta,\delta]$ such that $j+I\notin [-\delta,\delta]\mod I$. In particular is $h_1\notin P_{n'}$, leading to the conclusion that $n'\cong 0\mod I$. Notice that $(0,I)$ is central in $\frac{G}{K}$, we might thus assume that $n'=0$. In particular we must have that $[h,h_1]K_0K=cK_0K$. As $[h,h_1],c$ and $K_0$ all lie in $C$, and as $C\cap K=1$, this happens precisely when $[h,h_1]K_0=cK_0$ which does not happen by construction of $K_0$.
		We thus have also in the case where $n_1=0$ that $g_1K_0K\nsim g_2K_0K$. 
		
		The group $\frac{G}{K_0K}$ is virtually nilpotent with $N\lelangle t^I\rerangle$ as finite index nilpotent subgroup. By \cite{formanek1976conjugate}, we know thus that $\frac{G}{K_0K}$ is conjugacy separable and thus that there exists a finite quotient $Q$ of $\frac{G}{K_0K}$ in which the images of $g_1$ and $g_2$ are non-conjugate.		
	\end{proof}
	\begin{remark}
	Notice that any finite quotient of $G_0$ factors over a quotient satisfying property $(P)$. Indeed let $\frac{G_0}{K}$ be finite, then $t^P\in K$ for some positive integer $P$. It follows that $$a_iK=t^{P}a_it^{-P}K=a_{i+P}K$$ and thus that $[a_j,a_i]K=[a_j,a_{i+P}]K$. The quotient $\frac{G_0}{K}$ must thus factor over $$\frac{G_0}{\lelangle\{[a_j,a_i][a_j,a_{i+I}]\inv\mid i\in \ZZ\}\rerangle}.$$
	
	Furthermore, notice that the set of groups satisfying property $(P)$ is closed under taking quotients with kernel in $C$. It follows that if $G'\in\cat H$, then also all finite quotients of $G'$ factor over some quotient with property $(P)$.
	\end{remark}
	
	\section{Cyclic centre}\label{sec:conjugacy}
	For our first results, it suffices to consider groups in $\cat H$ with cyclic centre.
	For this we will take a quotient in which the elements $c_i$ get mapped to powers of $c_1$.
	\begin{definition}\label{def:cyclicCentre}
		Let $d:\ZZ\rightarrow \ZZ$ be an anti-symmetric function such that $d(1)=1$. Define $G_d$ the group with presentation
		$$
		G_d=\lelangle \{t,a_i,c_i\vert i\in\ZZ\}\Bigg\vert \begin{matrix} ta_it\inv=a_{i+1}&i\in\ZZ\\
			tc_it\inv=c_i&i\in\ZZ\\
			[a_i,a_j]=c_{i-j}&i,j\in\ZZ\\ [c_i,a_j]=[c_i,c_j]=1&i,j\in\ZZ\\ c_i=c_1^{d(i)}&i\in\ZZ
		\end{matrix}\rerangle.
		$$
	\end{definition}
	Notice that $G_d$ is indeed a quotient of $G_0$ where the kernel is generated by the central elements $\{c_ic_1^{-d(i)}\mid i\in\ZZ\}$, and as such $G_d\in\cat H$.
	
	The properties $G_d$ attains depend on the function $d$. One first example of this is the following:

	\begin{proposition}\label{prop:whenRf}
		The group $G_d$ is residually finite if and only if $d\mod q$ is periodic for infinitely many positive integers $q$.
	\end{proposition}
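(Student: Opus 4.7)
The plan is to reduce the question to separating non-trivial powers of $c_1$. Since $G_d/\langle c_1\rangle\iso \ZZ\wr\ZZ$ is residually finite, any non-trivial element of $G_d$ outside $\langle c_1\rangle$ can be separated from $1$ by pulling back a finite quotient of $\ZZ\wr\ZZ$. Thus $G_d$ is residually finite if and only if, for every $m\neq 0$, the element $c_1^m$ has non-trivial image in some finite quotient, and I would show that this is equivalent to $S=\{q>0\mid d\bmod q\text{ is periodic}\}$ being infinite. A preliminary step would be to check that $c_1$ has infinite order in $G_d$: the kernel of $G_0\to G_d$ lies inside $C_0$ and by inspection projects trivially onto the $c_1$-coordinate of the free abelian group $C_0$.

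For sufficiency, I would assume $S$ is infinite and pick, for a given $m\neq 0$, some $q\in S$ with $q\nmid m$ (possible because $S$ is not contained in the finite set of divisors of $m$). Next I would observe that $G_d/\langle c_1^q\rangle$ satisfies property $(P)$: the periodicity $d(k+P)\equiv d(k)\bmod q$ translates, via $[a_i,a_j]=c_1^{d(i-j)}$, directly into $[a_i,a_j]=[a_i,a_{j+P}]$. Then \Cref{prop:cSepPeriodicCentre} gives that this quotient is conjugacy separable, hence residually finite. Since $c_1$ has order exactly $q$ in $G_d/\langle c_1^q\rangle$ and $q\nmid m$, the image of $c_1^m$ is non-trivial and survives in some further finite quotient, giving the desired finite quotient of $G_d$.

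For necessity, I would prove the contrapositive: if $S$ is finite and bounded by $N$, then $c_1^{N!}$ lies in the kernel of every finite quotient of $G_d$, contradicting residual finiteness. I would invoke the remark after \Cref{prop:cSepPeriodicCentre}: every finite quotient of $G_d$ factors through some property-$(P)$ quotient $G_d^{(P)}$, in which the imposed $[a_j,a_i]=[a_j,a_{i+P}]$ forces $c_1^{d(k)-d(k+P)}=1$ for all $k$. If $c_1$ had infinite order in $G_d^{(P)}$, then $d(k)=d(k+P)$ identically, so $d$ would be $P$-periodic as a $\ZZ$-valued function and $S=\ZZ_{>0}$, contradicting finiteness. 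Otherwise $c_1$ has finite order $r$ in $G_d^{(P)}$; the above relations force $r\mid d(k)-d(k+P)$ for all $k$, so $d\bmod r$ is $P$-periodic, i.e.\ $r\in S$, and hence $r\leq N$. The order of $c_1$ in any further finite quotient then divides $r$, hence divides $N!$, as claimed.

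The trickiest step will be controlling the order of $c_1$ in the intermediate quotient $G_d^{(P)}$, which is typically infinite and could in principle have hidden relations collapsing $c_1$ further than the stated ones. The argument sidesteps this because we need not compute the order explicitly: whatever the actual order $r$ turns out to be, the defining relations guarantee that $d\bmod r$ is $P$-periodic and thus $r\in S$, which is all that the necessity direction requires.
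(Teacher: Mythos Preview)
Your argument is correct and follows essentially the same route as the paper: both reduce to separating powers of $c_1$ via residual finiteness of $\ZZ\wr\ZZ$, handle sufficiency by noting $G_d/\langle c_1^q\rangle$ has property $(P)$ and invoking \Cref{prop:cSepPeriodicCentre}, and handle necessity by reading off from any finite quotient a modulus $r$ (the order of the image of $c_1$) and a period $P$ with $d\bmod r$ $P$-periodic. One cosmetic slip: the kernel of $G_0\to G_d$ does not literally project trivially onto the $c_1$-coordinate of $C_0$ (the generator $c_ic_1^{-d(i)}$ has $c_1$-coordinate $-d(i)$); the correct statement is simply that the relations $c_i=c_1^{d(i)}$ present $C$ as infinite cyclic on $c_1$, which is what you actually use.
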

	\begin{proof}
		Let $g\in G_d$ be non-trivial.
		As $\ZZ\wr\ZZ$ is residually finite, we may restrict ourselves to the case where $g=c_1^k$. Let $q>\abs{k}$ be such that $d\mod q$ is $I$-periodic, then $g$ is non-trivial in $\frac{G_d}{\lelangle c^q\rerangle}$. The result follows as by \Cref{prop:cSepPeriodicCentre}, this quotient is conjugacy separable and thus residually finite.
		
		On the other hand, let $k$ be a positive integer, and let $\varphi:G_d\rightarrow Q$ be a finite quotient of $G_d$ in which $c_1^k$ does not vanish. Let $q$ be the order of $\varphi(c_1)$ in Q. Notice that $q\nmid k$. Let $I_0$ be minimal such that $\varphi(a_i)=\varphi(a_{i+I})$ for some $i$, after conjugation with $t$, we have that $\varphi(a_ia_{i+I}\inv)$ is trivial for all $i$. Taking the commutator with $\varphi(a_0)$, we obtain that $\varphi[a_ia_{i+I}\inv,a_0]=\varphi(c_1^{d(i)-d(i-I_0)})$ is trivial, and thus that $q\mid d(i)-d(i-I_0)$ for every $i$. In particular is $d\mod q$ $I$-periodic.
		
		Let $k$ be the product of all $q$ for which $d\mod q$ is periodic, then there can be no finite quotient in which $c_1^k$ does not vanish.
	\end{proof}
	A recent preprint \cite{bodart2025finite} considered the class of central $\frac{\ZZ}{2\ZZ}$-by-$\frac{\ZZ}{2\ZZ}\wr H$ extensions. Note that if $H=\ZZ$, then these groups are quotients of $G_d$ for some $d$ with kernel generated by $\{a_i^2,c_1^2\mid i\in\ZZ\}$. Their Proposition 5.1. Then translates to $d\mod 2$ being periodic. Which of course closely resembles our \Cref{prop:whenRf}.
	
	We also formulate a variation of \Cref{prop:whenRf}
	
	\begin{proposition}\label{prop:conjsep}
		$G_d$ is conjugacy separable if and only if $d\mod q$ is periodic for every positive integer $q$.
	\end{proposition}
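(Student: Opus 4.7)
My plan is to compare conjugacy in $G_d$ with that in the quotients $G_q:=G_d/\langle c_1^q\rangle$, which belong to $\cat H$ and satisfy property $(P)$ precisely when $d\bmod q$ is periodic: any period $P$ of $d\bmod q$ gives $[a_i,a_{j+P}]=c_1^{d(i-j-P)}=c_1^{d(i-j)}=[a_i,a_j]$ in $G_q$, so \Cref{prop:cSepPeriodicCentre} makes each such $G_q$ conjugacy separable.

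For the forward direction, assume $d\bmod q$ is periodic for every $q$. Given non-conjugate $g_1,g_2\in G_d$, I first use conjugacy separability of $\ZZ\wr\ZZ=G_d/C$ (as at the start of the proof of \Cref{prop:cSepPeriodicCentre}) to reduce to $g_2=g_1c_1^k$. Then $c_1^k\notin[Z_2,g_1]$, a cyclic subgroup of $C\cong\ZZ$; pick $q$ with $c_1^k\notin[Z_2,g_1]+\langle c_1^q\rangle$ (take $q=d$ when $[Z_2,g_1]=\langle c_1^d\rangle$ with $d>0$, or $q=|k|+1$ otherwise). Then $g_1\not\sim g_2$ already in $G_q$, and the conjugacy separability of $G_q$ supplies a finite quotient separating them.

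For the reverse direction, suppose $d\bmod q_0$ is not periodic. A Chinese Remainder argument (combining periods modulo each prime-power factor of $q_0$) lets me assume $q_0=p^e$ is a prime power. I take $g_1:=a_0^{p^e}$ and $g_2:=g_1\cdot c_1^{p^{e-1}}$. For non-conjugacy in $G_d$: the centralizer of the $\ZZ\wr\ZZ$-image of $g_1$ is just the base group (by torsion-freeness of $\bigoplus_i\ZZ$), so any $h$ with $[h,g_1]\in C$ lies in $N$; writing $h=\prod a_j^{\alpha_j}$ and using bilinearity of the commutator in the $2$-step nilpotent $N$ gives $[h,g_1]=c_1^{p^e\sum_j\alpha_j d(j)}$. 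Using $d(1)=1$, this yields $[Z_2,g_1]=p^e\ZZ$, and since $p^{e-1}\notin p^e\ZZ$ we conclude $g_1\not\sim g_2$ in $G_d$.

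The main obstacle is showing $g_1\sim g_2$ in every finite quotient $Q$. Since every finite $Q$ factors through a quotient of the form $Q_0:=G_d/\langle t^P,a_0^M,c_1^q\rangle^G$, it suffices to verify conjugacy in $Q_0$. By the remark after \Cref{prop:cSepPeriodicCentre}, $d\bmod q$ is periodic with period dividing $P$, so $q\mid\gcd_j(d(j)-d(j-P))$; since $d\bmod p^e$ is not periodic, $p^e$ does not divide this gcd, so $v_p(q)<e$. If $g_1\neq 1$ in $Q_0$ (equivalently $M\nmid p^e$), the condition $p^e(t^n-1)\equiv 0$ in the abelianization $(\ZZ/M\ZZ)[\ZZ/P\ZZ]$ forces $n\equiv 0\bmod P$, so conjugators of $g_1$ still lie in $N$ modulo $K$, and the achievable central shifts form $p^e\ZZ+q\ZZ=\gcd(p^e,q)\ZZ$; since $\gcd(p^e,q)=p^{v_p(q)}\mid p^{e-1}$, the shift $p^{e-1}$ is achievable. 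If $g_1=1$ in $Q_0$ ($M\mid p^e$), then $c_1^{p^e}=[g_1,a_1]=1$ forces $q\mid p^e$, which with $p^e\nmid q$ gives $q\mid p^{e-1}$, so $c_1^{p^{e-1}}=1$ and $g_2=g_1=1$ in $Q_0$, trivially conjugate.
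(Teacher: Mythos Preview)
Your proof is correct. The forward direction is essentially identical to the paper's argument, reducing via conjugacy separability of $\ZZ\wr\ZZ$ to the case $g_2=g_1c_1^k$, separating $k$ from the subgroup of achievable shifts modulo a suitable $q$, and then invoking \Cref{prop:cSepPeriodicCentre} for the quotient $G_d/\langle c_1^q\rangle$.

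For the reverse direction you and the paper both take $g_1=a_0^{p^e}$ and $g_2=g_1c_1^{p^{e-1}}$, and both extract from non-periodicity of $d\bmod p^e$ the key numerical fact $v_p(q)<e$ for the order $q$ of $c_1$ in any finite quotient. The arguments diverge in how this fact is used. The paper gives a short direct witness: in an arbitrary finite quotient $Q$ one has $a_0=a_{I_1}$ for some $I_1$, hence the element $[a_0a_{I_1}^{-1},a_j^{u}]=c_1^{u(d(j)-d(j-I_1))}$ is trivial in $Q$; choosing $j$ with $d(j)\not\equiv d(j-I_1)\pmod{p^e}$ and a suitable $u$ makes this exponent congruent to $p^{e-1}\pmod{p^e}$, so $g_2$ coincides in $Q$ with an element already conjugate to $g_1$ in $G_d$. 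Your route is more structural: you pass to the intermediate quotient $Q_0=G_d/\langle t^P,a_0^M,c_1^q\rangle^{G_d}$, identify $Q_0/C_{Q_0}\cong(\ZZ/M\ZZ)\wr(\ZZ/P\ZZ)$, compute the centralizer of $g_1$ there, and conclude that the achievable central shifts in $Q_0$ form $p^e\ZZ+q\ZZ\ni p^{e-1}$ (with the degenerate case $M\mid p^e$ handled separately). The paper's argument is slicker because it never needs to describe centralizers in $Q_0$ or split into cases; your argument is longer but makes the mechanism---that conjugation can only shift by multiples of $p^e$, while the ambient identification $c_1^q=1$ supplies the missing factor---more transparent. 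One small point: in your Case~2 you write ``forces $q\mid p^e$''; the cleaner statement is that the order of $c_1$ in $Q_0$ divides $\gcd(p^e,q)=p^{v_p(q)}\mid p^{e-1}$, which is what you need. Since the order of $c_1$ is in fact the same in $Q$ and $Q_0$ (it divides $q$ by construction of $Q_0$ and is divisible by $q$ since $Q$ is a quotient of $Q_0$), your phrasing is ultimately fine.
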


	\begin{proof}
		Assume $d\mod q$ is periodic for every $q$.
		Let $g_1,g_2\in G_d$ be non-conjugate. As $\ZZ\wr\ZZ$ is conjugacy separable, we may assume that $g_2=g_1c_1^{i}$ for some $i\neq 0$. Let $J=\{j\in \ZZ\mid g_1c_1^{j}\sim g_1\}$. As $c_1$ is central in $G_d$, we have that $J$ is a subgroup of $\ZZ=\langle c_1\rangle$. Using that $\ZZ$ is subgroup separable, combined with the Chinese remainder theorem, we may find some prime power $q=p^k$ such that $i\notin J\mod q$. It follows that $g_2 \langle c_1^q\rangle\nsim g_1\langle c_1^q\rangle$.
		The result follows from \Cref{prop:cSepPeriodicCentre} as $\frac{G_d}{\langle c_1^q\rangle}$ satisfies property $(P)$.
		
		For the other direction, notice first that if $d\mod q$ is periodic for every prime power $q$, then $d\mod m$ must also be periodic for every positive integer $q$. Assume thus that $q=p^k$ is such that $d\mod q$ is not periodic. Consider the elements $g_1=a_0^{p^k}$ and $g_2=a_0^{p^k}c_1^{p^{k-1}}$. One sees easily that $a_0^{p^k}$ is conjugate to $a_0^{p^k}c_1^{I}$ precisely when $p^k\mid I$.
		We will show that $g_1$ and $g_2$ are conjugate in every finite quotient of $G_d$.
		
		Pick $Q$ a finite quotient of $G_d$ and let $I_0$ and $I_1$ be such that $a_{I_0}$ and $a_{I_0+I_1}$ get identified. After conjugation with $t^{I_0}$ we obtain that $a_0$ and $a_{I_1}$ get identified in $Q$. As $d\mod q$ is not $I_1$ periodic, there exists some $j$ such that $d(j)\neq d(j-I_0)\mod p^k$. In particular, there exists some $u\in \ZZ$ such that $u(d(j)-d(j-I_1))=p^{k-1}\mod p^k$.
		
		As $a_0\inv a_{I_1}$ gets mapped to the identity in $Q$, so must $[a_0\inv a_{I_1},a_j^{u}]$ be mapped to the identity. This commutator is given by $c_1^{u(d(j)-d(j-I_1))}$. We thus also obtain that $g_1$ gets identified with $a_1^{p^k}c_1^{p^{k-1}+u(d(j)-d(j-I_1))}$. This element is conjugate to $g_1$.
		
		\end{proof}

	As periodicity of $d$ is so important for our applications, we hard code it into our construction.
	\begin{definition}\label{def:d}
		Let $f:\NN\rightarrow\NN$ be a sequence such that $f(0)=1$, define then $d_f:\ZZ\rightarrow\ZZ$ as $$
		d_f(i)=\begin{cases}
			\begin{matrix}
				0&i=0\\
				f(j)&i\cong3^j\mod 3^{j+1}\\
				-f(j)&i\cong -3^j\mod 3^{j+1}.
			\end{matrix}
		\end{cases}
		$$
	\end{definition}
	\begin{example}
		If $f:\NN\rightarrow\NN$ where to map $i$ to $i+1$, then $d_f$ would be the odd function given on $\NN$ by
		$$
		0,1,-1,2,1,-1,-2,1,-1,3,1,-1,2,1,-1,-2,1,-1,-3,1,-1,2,1,-1,-2,1,-1,4\cdots.
		$$
	\end{example}
	Clearly, the above procedure is algorithmic:
	\begin{lemma}\label{prop:recursivelyPresented}
		Let $f:\NN\rightarrow\NN$ be a computable function, then $G_{d_f}$ is recursively presented
	\end{lemma}
	\begin{proof}
		First notice that if $f$ is computable, then so is $d$.
		Using the relations of the first type of $G_{d_f}$, we have that $a_i=t^{i}a_0t^{-i}$. Substituting this in the relations gives us $5$ families of relations, the first $4$ of which are obviously computable sets, and the last being computable as $d$ is.
	\end{proof}
	A further advantage of our chosen form of $d$, is that it is often easier to describe commutators in $N_d$, this is exemplified by the following two lemmas:
	
	In the following lemma $\nu_3$ denotes the $3$-adic valuation, that is for $n$ an integer, $\nu_3(n)$ is maximal such that $3^{\nu_3(n)}\mid n$.
	\begin{lemma}\label{prop:comutWithSeries}
		Let $n_0>0$ and for $i\in\NN$, let $$g=a_0a_{n_0}a_{2n_0}\cdots a_{(3^{i}-1)n_0}\in G_{d_f}.$$
		Then for $j\in\ZZ$ such that $\nu_3(n_0)\leq \nu_3(j)$, $[g,a_j]=c^{d_f(j-\tilde j)}$ where $\tilde j$ is the unique integer in $\{0,n_0,2n_0,\cdots,(3^i-1)n_0\}$ that is congruent to $j \mod 3^{i+\nu_3(n_0)}$.
	\end{lemma}
	\begin{proof}
		Notice that $[g,a_j]$ is given by $$c^{\sum_{n=1}^{3^i-1} d(j-nn_0)}.$$
		Notice that for any $n\in[0,3^{i}-1]$ there exists a unique $n'$ in that same interval such that ${nn_0-j\cong-(n'n_0-j)\mod 3^{i+\nu_3(n_0)}}$. In particular, if ${nn_0-j\not\cong 0\mod 3^{i+\nu_3(n_0)}}$, we have that $d(j-nn_0)=-d(j-n'n_0)$.
		Cancelling out these terms we are left with $$
		[g,a_j]=c^{d(j-\tilde j)}.
		$$
		as had to be shown
	\end{proof}
	
	\begin{lemma}\label{prop:SelectorCommutator}
		Let $g_1=\prod_{i=-m}^m a_i^{e_i}$ Let $j_0\in [-m,m]$ and let $j\in\NN$ be such that $3^j>2m$. Then $[g_1,a_{j_0+3^j}\inv a_{j_0}]=c_1^{e_{j_0}f(j)}$.
	\end{lemma}
	\begin{proof}
		By definition, $[g_1,a_{j_0}\inv a_{j_0+3^j}]$ is given by $c_1^{\sum e_id(i-j_0)+d(j_0+3^j-i)}$. When $i\in [-m,m]\backslash \{j_0\}$, then $j_0-i\cong j_0+3^j-i\ncong 0\mod 3^j$. In particular, $d(j_0-i)=d(j_0+3^j-i)$. We are thus left with $c_1^{e_i(d(i-i)+d(i+3^j-i)}$ which is in turn equal to $c_1^{e_{j_0}d(3^j)}$.
	\end{proof}
	\subsection{Separating word and conjugacy problem}\label{sec:function}
	In this subsection, we construct a group that is conjugacy separable with solvable word problem but unsolvable conjugacy problem.
	We start with defining the function $f$ that we wish to plug into $G_{d_f}$.
		
	To specify the function $f$ we will use, first let $\mc P$ be a prime valued function such that $P$ is computable, but such that membership of the image $\mathrm{Im}(P)$ is undecidable.
	\begin{lemma}
		There exists a computable, prime valued function $\mc P:\NN\rightarrow\NN$ such that membership of $\mathrm{Im}\mc P$ is undecidable.
	\end{lemma}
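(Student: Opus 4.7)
The plan is to reduce the statement to the classical fact that $\NN$ contains recursively enumerable subsets with undecidable membership problem, and then transport such a set into the primes via the enumeration $n\mapsto p_n$.

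First I would fix an r.e.\ but undecidable subset $A\subseteq\NN$; the canonical choice is the halting set $A=\{n\mid \varphi_n(n)\text{ halts}\}$. Since $A$ is infinite and r.e., there is a total computable function $f:\NN\rightarrow\NN$ whose image is exactly $A$ (obtained, for instance, by dovetailing the computations of $\varphi_n(n)$ for $n\in\NN$ and outputting $n$ the first time a halt is witnessed).

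Next, let $p_n$ denote the $n$-th prime. The function $n\mapsto p_n$ is computable (by the sieve of Eratosthenes) and is a bijection from $\NN$ onto the set of primes. Define
$$\mc P(n)=p_{f(n)}.$$
This $\mc P$ is a composition of computable functions, hence computable, and by construction is prime-valued. Its image equals $\{p_n\mid n\in A\}$.

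Finally, I verify that membership in $\mathrm{Im}\mc P$ is undecidable. Given an arbitrary input $n\in\NN$, we can effectively compute $p_n$ and then query whether $p_n\in\mathrm{Im}\mc P$; by construction the answer is ``yes'' precisely when $n\in A$. Thus a decision procedure for $\mathrm{Im}\mc P$ would yield one for $A$, contradicting undecidability of the halting problem. The argument is entirely standard computability theory and I expect no genuine obstacle; the only care required is to ensure that the reduction $A\leftrightarrow\mathrm{Im}\mc P$ is effective in both directions, which is guaranteed by the computability of primality testing and of $n\mapsto p_n$.
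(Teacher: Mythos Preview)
Your proof is correct and follows essentially the same approach as the paper: both encode the halting problem into the primes by composing a computable enumeration of the halting set with the computable bijection $n\mapsto p_n$, obtaining $\mc P$ by dovetailing. The paper phrases this as indexing Turing machines directly by primes and listing the halting ones in order of steps-plus-index, which is just a less detailed version of your construction.
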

	\begin{proof}
		For instance, one can index all Turing machines with a prime number, then $\mc P$ lists the primes corresponding to all Turing machines that halt in the order where the number of steps taken plus the index of the machine increases.
	\end{proof}

	Let $f_0:\NN\rightarrow\NN$ be the function that maps $i$ to $\prod_{j<i}p_j$ where $p_*$ is the sequence of primes.
	Let $\mc P':\NN\rightarrow\NN$ be defined case-wise as$$
	\mc P':i\mapsto\begin{cases}
		\begin{matrix}
			1&\mc P(i)\nmid f_0(i)\\
			1&\exists j<i:\mc P(j)=\mc P(i)\\
			\mc P(i)&\text{otherwise}
		\end{matrix}
	\end{cases}
	$$
	and define $f:\NN\rightarrow\NN$ such that $f(i)=\left(\frac{f_0(i)}{\mc P'(i)}\right)^i$.
	\begin{lemma}\label{prop:dComputable}
		$f$ is a computable function.
	\end{lemma}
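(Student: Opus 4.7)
The plan is to unwind the definitions and verify that each ingredient is computable by composition of computable functions and bounded searches, with no unbounded searches anywhere.

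First, I would handle the case analysis defining $d$. Given $i\in\ZZ$, checking whether $i=0$ is trivial. For $i\neq 0$, I would compute the $3$-adic valuation $j=v_3(i)$ by trial division (a bounded loop, since $j\leq\log_3|i|$), then examine $m=i/3^j$, which is coprime to $3$. Since $m\equiv\pm1\pmod 3$, this reduction determines uniquely whether $i\equiv 3^j\pmod{3^{j+1}}$ or $i\equiv-3^j\pmod{3^{j+1}}$, so the case split in the definition is effective and exhaustive, and it remains only to compute $\pm n(j)$.

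Next I would verify that $n$ is computable. The sequence of primes $p_k$ is computable, so $n_0(j)=\prod_{k<j}p_k$ is computable by a finite product. To compute $\mc P'(j)$, one computes $\mc P(j)$ (possible by hypothesis), tests the divisibility $\mc P(j)\mid n_0(j)$, and performs the bounded search for some $k<j$ with $\mc P(k)=\mc P(j)$; all of this terminates. Hence $\mc P'(j)$ is computable, and moreover one checks from the definition that $\mc P'(j)\mid n_0(j)$ always holds (trivially when $\mc P'(j)=1$, and otherwise because the ``otherwise'' branch is only reached when $\mc P(j)\mid n_0(j)$). Therefore the integer division $n_0(j)/\mc P'(j)$ is exact, and $n(j)=\bigl(n_0(j)/\mc P'(j)\bigr)^{j}$ is computable.

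Combining the two previous paragraphs, $d(i)$ is obtained by a terminating procedure for every $i\in\ZZ$, so $d$ is a computable function. The only place where a potential obstacle could have arisen is the ``otherwise'' clause in the definition of $\mc P'$, because undecidability of $\mathrm{Im}(\mc P)$ might suggest a hidden unbounded search; however, the clause only involves $\mc P(j)$ for the single input $j$ together with the finite list $\mc P(0),\dots,\mc P(j-1)$, so no such search is needed. The undecidability of $\mathrm{Im}(\mc P)$ will only show up later, when we try to decide whether $d\bmod q$ is eventually periodic for various $q$.
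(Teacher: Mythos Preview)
Your proof is correct and takes essentially the same approach as the paper, which simply states that the result follows from $\mc P$ being computable and all further manipulations being computable. You have carefully unwound what those ``further manipulations'' are and verified explicitly that each one terminates, including the nice observation that $\mc P'(j)\mid n_0(j)$ so that the division defining $n(j)$ is exact; this is more detailed than the paper but not a different argument.
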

	\begin{proof}
		This follows from $\mc P$ being a computable function, and all further manipulations being computable.
	\end{proof}
	\begin{corollary}
		$G_{d_f}$ is recursively presented finitely generated
	\end{corollary}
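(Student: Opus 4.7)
The plan has two parts: finite generation and recursive presentability.

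For finite generation, I would simply observe that $G_d$ is a quotient of $G_0$, and $G_0$ is already generated by $S = \{t, a_0\}$ (since $a_i = t^i a_0 t^{-i}$ and $c_i = [a_i, a_0] \cdot c_0^{-1}$, etc., as noted after the definition of $G_0$). Hence the images of $t$ and $a_0$ generate $G_d$.

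For recursive presentability, the strategy is to rewrite the given presentation of $G_d$, which uses the infinite generating set $\{t, a_i, c_i \mid i \in \ZZ\}$, as a presentation on $\{t, a_0\}$ alone, and then to exhibit a Turing machine that enumerates its relations. The translation is via Tietze moves: replace every occurrence of $a_i$ by $t^i a_0 t^{-i}$ and every occurrence of $c_i$ by the word $[t a_0 t^{-1}, a_0]^{d(i)}$ (using $c_1 = [a_1,a_0]$ after identifying $c_1$ with its chosen generator and using the relation $c_i = c_1^{d(i)}$). After this rewriting, each of the four infinite families of relations in the presentation of $G_d$, together with the new family $c_i = c_1^{d(i)}$, becomes a family of words in $\{t, a_0\}^{\pm}$ indexed by one or two integer parameters.

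The key observation is then that each such family is recursively enumerable. The first four families only involve the integers $i, j$ as exponents of $t$, and the words produced are uniformly computable from $i, j$. The last family requires, given $i$, the integer $d(i)$ in order to write down the word $[t^i a_0 t^{-i}, a_0] = [t a_0 t^{-1}, a_0]^{d(i)}$; but by \Cref{prop:dComputable}, $d$ is computable, so this word is also uniformly computable from $i$. A single Turing machine can therefore dovetail through all finitely many families and all integer parameters, printing each relation in turn. This exhibits a recursive presentation of $G_d$ on the two generators $t, a_0$.

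There is no real obstacle here; the only thing to be careful about is that the rewriting must be uniformly computable, and this is immediate from the computability of $d$ together with the fact that the parametrizations of the $G_0$-relations by integers are effective.
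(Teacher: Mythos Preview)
Your proposal is correct and follows essentially the same approach as the paper: substitute $a_i = t^i a_0 t^{-i}$ (and hence express $c_i$ in terms of $t,a_0$) to obtain five families of relations on the two generators, observe that the first four are uniformly computable in the integer parameters, and invoke \Cref{prop:dComputable} for the fifth. The paper's proof is just a terser version of exactly this argument.
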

	\begin{proof}
		This follows from the above and \Cref{prop:recursivelyPresented}.
	\end{proof}
		
	\begin{lemma}
		Let $q=p^k$ be a prime power, then $d_f\mod q$ is a periodic function.
	\end{lemma}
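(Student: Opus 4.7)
The plan is to reduce the statement to showing that $n(j) \equiv 0 \pmod{q}$ for all sufficiently large $j$, and then to exploit the $3$-adic structure in the definition of $d$.

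First, I would analyze the $p$-adic valuation of $n(j) = \left(\frac{n_0(j)}{\mc P'(j)}\right)^j$. Since $n_0(j)$ is the product of the first $j$ primes, we have $p \mid n_0(j)$ as soon as $j$ exceeds the index of $p$ in the enumeration of primes. The denominator $\mc P'(j)$ is either $1$ or the prime $\mc P(j)$, and the second clause in the definition of $\mc P'$ (``$\exists j' < j:\mc P(j') = \mc P(j)$ forces $\mc P'(j) = 1$'') ensures that the equality $\mc P'(j) = p$ can occur for at most one index $j$. Hence, for all $j$ beyond these two thresholds, $p$ divides $\frac{n_0(j)}{\mc P'(j)}$, and consequently $p^j \mid n(j)$. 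Picking $J$ large enough that moreover $J \geq k$, this yields $n(j) \equiv 0 \pmod{q}$ for all $j \geq J$.

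Second, with $J$ fixed as above, I would show that $3^J$ is a period of $d \bmod q$ by a case analysis on the $3$-adic valuation $v_3(i)$ of a non-zero integer $i$. If $v_3(i) < J$, then $v_3(i + 3^J) = v_3(i)$ and the residue of $i / 3^{v_3(i)}$ modulo $3$ is unchanged, so in fact $d(i + 3^J) = d(i)$ on the nose. If $v_3(i) \geq J$, then by definition $d(i) = \pm n(j)$ for some $j \geq J$, hence $d(i) \equiv 0 \pmod{q}$; and a short check distinguishing whether $i / 3^J \bmod 3$ equals $1$ or $2$ shows that $v_3(i + 3^J) \geq J$ as well, so $d(i + 3^J) \equiv 0 \pmod{q}$ by the same reason. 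The boundary case $i = 0$ is immediate since $d(3^J) = n(J) \equiv 0 \pmod{q}$, and the anti-symmetry of $d$ handles negative $i$ for free.

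The argument is essentially bookkeeping; the only mild subtlety lies in the first step, where one must unfold the somewhat involved definition of $\mc P'$ to see that the obstruction to $p \mid n_0(j) / \mc P'(j)$ can arise for only finitely many values of $j$. Everything else is a routine $3$-adic case distinction.
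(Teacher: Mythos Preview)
Your proposal is correct and follows essentially the same approach as the paper's proof: both arguments first observe that $\mc P'$ takes the value $p$ at most once so that $q\mid n(j)$ for all sufficiently large $j$, and then use the $3$-adic structure of the definition of $d$ to conclude that $3^J$ (the paper writes $3^I$) is a period of $d\bmod q$. Your case analysis via the $3$-adic valuation $v_3(i)$ is just a rephrasing of the paper's dichotomy ``$j\not\equiv 0\pmod{3^I}$'' versus ``$j\equiv 0\pmod{3^I}$'', and your unpacking of why $p\mid n_0(j)/\mc P'(j)$ eventually makes explicit what the paper leaves implicit.
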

	\begin{proof}
		The function $\mc P'$ obtains $p$ as value at most once. It follows that $f$ takes multiples of $q$ as values except in a finite number of cases. Let $I$ be such that for $i\geq I$, $q\mid f(i)$.
		If $j\not\cong 0\mod 3^{I}$, then $d_f(j)$ is completely determined by this congruence class. Furthermore, if $j\cong 0\mod  3^{I}$, then $d_f(j)$ must take as value a multiple of $q$. It follows that $d_f\mod q$ is $3^{I}$ periodic.
	\end{proof}
	It follows by \Cref{prop:conjsep} that $G_{d_f}$ is conjugacy separable.
	\begin{remark}
		Even though this period always exists, it might be impossible to determine it effectively as for some primes $p$, it is impossible to determine whether or not $n_0$ takes them on as values.
	\end{remark}
	
	Now we proceed by demonstrating that the word problem is solvable in $G_{d_f}$
	\begin{proposition}
		The word problem is solvable in $G_{d_f}$.
	\end{proposition}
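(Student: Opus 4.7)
The plan is to reduce the word problem in $G_d$ to a routine computation by exploiting the central extension $1 \to C \to G_d \to \ZZ\wr\ZZ \to 1$. Given a word $w$ in the generators $t$ and $a_0$, I would first decide triviality of its image in the quotient $G_d/C \cong \ZZ\wr\ZZ$. Since $\ZZ\wr\ZZ$ is finitely presented and linear (hence residually finite), its word problem is solvable, so this step is effective. If the image is non-trivial, then $w \neq 1$ in $G_d$ and we are done.

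If the image is trivial, then $w$ represents an element of $C = \langle c_1 \rangle$, and the remaining task is to compute explicitly the integer $\kappa$ such that $w = c_1^{\kappa}$ in $G_d$. For this I would run the Hall collection process inside the parent group $G_0$: first use the relation $t a_i t\inv = a_{i+1}$ to rewrite $w$ as $t^m \cdot v$ with $v \in N_0$, then collect $v$ into the normal form $\prod_{|i|\le n} a_i^{\alpha_i} \cdot \prod_{0<j\le 2n} c_j^{\beta_j}$ using the commutator relations $[a_i,a_j] = c_{i-j}$. This is effective because, by \Cref{prop:support}, only a finite set of generators appears, and on that set $N_0$ is a finitely generated $2$-step nilpotent group with a consistent polycyclic presentation; the exponents $m, \alpha_i, \beta_j$ are therefore unique and algorithmically computable from $w$. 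Passing to $G_d$ and substituting $c_j = c_1^{d(j)}$ then gives $\kappa = \sum_j \beta_j d(j)$, which is computable in finitely many steps thanks to \Cref{prop:dComputable}.

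Finally, $w = 1$ in $G_d$ iff its image in $\ZZ\wr\ZZ$ is trivial and $\kappa = 0$. The latter equivalence relies on $c_1$ having infinite order in $G_d$, which holds because the kernel in $C_0$ defining $G_d$ is generated by $\{c_i c_1^{-d(i)} \mid i \in \ZZ\}$, and this subgroup intersects $\langle c_1 \rangle$ trivially inside the free abelian group $C_0$. The only nontrivial ingredient in the whole procedure is the classical Hall collection inside the nilpotent group $N_0$, which is standard; the role of the specific function $d$ is confined to the last arithmetic step, so the subtle residual-finiteness behaviour of $G_d$ plays no role here and the conjugacy problem can remain unsolvable without obstructing this argument.
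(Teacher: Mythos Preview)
Your proof is correct and follows essentially the same approach as the paper: both reduce the question to a finitely generated $2$-step nilpotent computation by first peeling off the $t$-exponent, then rewriting in the $a_i$, and finally using computability of $d$ (\Cref{prop:dComputable}) to make the commutator relations explicit. The only cosmetic difference is that the paper solves the word problem directly in the finitely presented nilpotent subgroup $N'=\langle a_i\mid i\in A\rangle$, whereas you first pass through $\ZZ\wr\ZZ$ and then explicitly compute the residual power $\kappa$ of $c_1$; your extra observation that $c_1$ has infinite order is correct and makes the final step clean.
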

	\begin{proof}
		Let $w\in F_{\{\tau,\alpha\}}$. Notice that $w(t,a_0)\in N$ if and only if the sum of all the exponents of $\tau$ in $w$ is $0$. Applying relations of the form $ta_it\inv=a_{i+1}$, we may find some word $w'\in F_{\{\alpha_i\mid i\in \ZZ\}}$ such that $w'((a_i)_i)=w(a_0,t)$. Let $A\subset \ZZ$ be finite such that $w'$ contains letters only in $\{a_i,a_i\inv\mid i\in A\}$. Then obviously $w'((a_i)_i)=w(a_0,t)$ vanishes in $G_{d_f}$ if and only if it vanishes in ${N'=\langle\{a_i,a_i\inv\mid i\in A\}\rangle}$. We can find an explicit presentation for $N'$ by computing $d(j-i)$ for all $i,j\in A$. Finally the result follows as the word problem is solvable for finitely presented $2$-step nilpotent groups.
	\end{proof}

	\begin{lemma}\label{prop:TranslateConjToMembership}
		Let $i,q=p^k$ be such that $q\mid f_0(i)$. Let $$
		g_1=a_0a_1a_2\cdots a_{3^{i}-1}\in G_{d_f}
		$$
		and let $g_2=g_1c_1^{\frac{f(i)}{p^{i}}}$. Then are $g_1$ and $g_2$ conjugate if and only if $\mc P'(j)=p$ for some $j>i$.
	\end{lemma}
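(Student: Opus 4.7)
The plan is to reduce the conjugacy of $g_1$ and $g_2$ to a membership question in $\mathbb{Z}$, then resolve it via $p$-adic valuations. First I would observe that any $g \in G_d$ with $g g_1 g\inv \in g_1 C$ must lie in $N$: writing $g = h t^n$ and reducing modulo $C$, the quotient $N/C$ remains free abelian on $\{a_i\}_{i \in \ZZ}$ (since the extra defining relations of $G_d$ lie in $C_0$), and conjugation by $t^n$ shifts indices, so $g g_1 g\inv \equiv a_n a_{n+1} \cdots a_{n+3^i-1} \pmod{C}$; comparing supports with $g_1 \equiv a_0 a_1 \cdots a_{3^i - 1}$ forces $n = 0$. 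For $h \in N$ the identity $h g_1 h\inv = [h, g_1] g_1$ together with the centrality of $c_1$ then shows that $g_1 \sim g_2$ is equivalent to $c_1^{n(i)/p^i}$ lying in the image of the homomorphism $h \mapsto [h, g_1]\colon N/C \to C$, which is generated by $\{[a_j, g_1] : j \in \ZZ\}$.

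Next I would apply \Cref{prop:comutWithSeries} with $n_0 = 1$: $[g_1, a_j] = c_1^{d(j - \tilde j)}$, where $\tilde j \in \{0, 1, \ldots, 3^i - 1\}$ is the residue of $j$ modulo $3^i$. As $j$ runs over $\ZZ$ the differences $j - \tilde j$ exhaust all multiples of $3^i$, and unpacking the definition of $d$ yields $d(\ell \cdot 3^i) = \pm n(i + \nu_3(\ell))$ for every $\ell \neq 0$. Hence the relevant subgroup of $\ZZ$ is exactly $M = \langle n(m) : m \geq i \rangle$, and $g_1 \sim g_2$ reduces to the divisibility $\gcd_{m \geq i} n(m) \mid n(i)/p^i$.

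The final step is a local computation at $p$. Since $n_0(m)$ is squarefree and $p \mid n_0(i) \mid n_0(m)$ for every $m \geq i$, the valuation $\nu_p(n(m)) = m \cdot \nu_p(n_0(m)/\mc P'(m))$ equals $m$ when $\mc P'(m) \neq p$ and equals $0$ when $\mc P'(m) = p$. Integrality of $n(i)/p^i$ forces $\mc P'(i) \neq p$, so $\nu_p(n(i)/p^i) = 0$; at primes $r \neq p$ the needed divisibility follows automatically from $\gcd_{m\geq i} n(m) \mid n(i)$. Thus the membership holds precisely when some $m \geq i$ satisfies $\mc P'(m) = p$, and since $\mc P'(i) \neq p$ this is exactly the existence of $j > i$ with $\mc P'(j) = p$. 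The main obstacle I anticipate is the opening reduction: one must verify that any conjugator can be taken inside $N$, which depends on the fact that the quotient relations defining $G_d$ leave the free abelian structure of $N/C$ undisturbed.
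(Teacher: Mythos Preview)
Your proposal is correct and follows essentially the same route as the paper: reduce the conjugator to $N$ via the centralizer computation in $\ZZ\wr\ZZ$, use \Cref{prop:comutWithSeries} to identify the image of $h\mapsto[h,g_1]$ as $\langle c_1^{n(m)}:m\geq i\rangle$, and conclude by comparing $p$-adic valuations. Your write-up is in fact slightly more careful than the paper's in two places---you spell out why $\mc P'(i)\neq p$ (needed so that the witness $j$ is strictly greater than $i$) and you check the divisibility at primes $r\neq p$ explicitly---but the argument is the same.
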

	\begin{proof}
		
		By \Cref{prop:conjInSubgroup}, $g_1\sim g_2$ if and only if $c_1^{\frac{f(i)}{p^{i}}}\in [g_1,N]$. Furthermore, by \Cref{prop:comutWithSeries}, $[g_1,N]$ is given by $\langle c_1^{f(j)}\mid j\geq i\rangle$. In particular this subgroup contains $c_1^{f(i)}$. By the Chinese remainder theorem, it follows that it contains  $c_1^{\frac{f(i)}{p^{i}}}$ if and only if it contains some $c_1^k$ where $k$ is not a multiple of $p$. This happens precisely if there exists some $j>i$ such that $p\nmid f(j)$ which happens precisely when $\mc P'(j)=p$.
	\end{proof}
	\begin{corollary}
		The conjugacy problem is not solvable in $G_{d_f}$.
	\end{corollary}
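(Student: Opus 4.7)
The plan is to prove the corollary by reducing the undecidable membership problem for $\mathrm{Im}(\mc P)$ to the conjugacy problem in $G_d$, using \Cref{prop:TranslateConjToMembership} as the engine. Suppose, for contradiction, that the conjugacy problem in $G_d$ were solvable; I will derive a decision procedure for $\mathrm{Im}(\mc P)$.

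Given a prime $p$, I would first compute the smallest $i_0$ with $p \mid n_0(i_0)$; this is effective, being essentially the position of $p$ in the sequence of primes. Since $\mc P$ is computable, so is $\mc P'$, and so we can directly check whether $p \in \{\mc P'(0), \ldots, \mc P'(i_0)\}$. If so, then $p = \mc P(j)$ for some $j \leq i_0$, hence $p \in \mathrm{Im}(\mc P)$ and we return YES. Otherwise, form the elements
$$
g_1 = a_0 a_1 a_2 \cdots a_{3^{i_0}-1}, \qquad g_2 = g_1 c_1^{n(i_0)/p^{i_0}}
$$
in $G_d$ as in the lemma (taking $q = p$, $k = 1$, and noting that $p \mid n_0(i_0)$ together with the definition of $n$ ensures the exponent $n(i_0)/p^{i_0}$ is a positive integer), and submit them to the hypothetical conjugacy algorithm. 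By \Cref{prop:TranslateConjToMembership}, $g_1$ and $g_2$ are conjugate if and only if there exists some $j > i_0$ with $\mc P'(j) = p$; combining this with the direct check for $j \leq i_0$, we have decided whether $p \in \mathrm{Im}(\mc P')$.

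The final step, which is the only genuine obstacle, is to verify that this also decides $\mathrm{Im}(\mc P)$. Because the enumeration $\mc P$ lists each halting Turing machine index exactly once, a prime $p$ belongs to $\mathrm{Im}(\mc P')$ exactly when $p \in \mathrm{Im}(\mc P)$ and the unique $j$ with $\mc P(j) = p$ satisfies $p \mid n_0(j)$. By padding the enumeration of halting machines (emitting each new halting index $p$ only after enough filler positions have passed to guarantee $p \mid n_0(j)$ at the point of its first emission), we may arrange that $\mathrm{Im}(\mc P') \setminus \{1\} = \mathrm{Im}(\mc P)$. The undecidability of the right-hand side then contradicts our assumption, and the corollary follows.
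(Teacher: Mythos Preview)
Your argument follows the same reduction as the paper, but you create unnecessary work for yourself in the final step by checking $\mc P'$ rather than $\mc P$ at the outset. The paper instead checks directly whether $p=\mc P(j)$ for some $j\le i_0$ (recall $\mc P$ is computable); if not, it invokes \Cref{prop:TranslateConjToMembership} exactly as you do. The observation you miss is that your $i_0$, being the least index with $p\mid n_0(i_0)$, automatically satisfies $p\mid n_0(j)$ for \emph{every} $j\ge i_0$. Hence, once we know $\mc P(j)\ne p$ for all $j\le i_0$, the condition ``$\mc P'(j)=p$ for some $j>i_0$'' from the lemma is equivalent to ``$\mc P(j)=p$ for some $j>i_0$'': the first clause in the definition of $\mc P'$ is vacuous for such $j$, and the second clause is vacuous at the first occurrence of $p$. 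Thus the conjugacy test decides $p\in\mathrm{Im}(\mc P)$ directly, and no modification of $\mc P$ is needed.

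Your padding fix, by contrast, changes $\mc P$ and hence $d$ and the group $G_d$ itself, so strictly speaking you are proving the corollary for a different group than the one fixed in \cref{sec:function}. The idea can be made rigorous, but you would still have to specify what the filler values are (they must be primes, so they land in $\mathrm{Im}(\mc P)$), check that $\mathrm{Im}(\mc P)$ remains undecidable after this alteration, and verify that the two defining clauses of $\mc P'$ behave as intended on the filler; as written this step is hand-wavy. The paper's one-line observation sidesteps all of it.
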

	\begin{proof}
		Assume from contradiction that the conjugacy problem is solvable in $G_{d_f}$, we will show that the membership problem in $\mathrm{Im}(\mc P)$ is decidable for primes.
		Let $p$ be the $i$-th prime. As $\mc P$ is a computable function, we may compute $\mc P(j)$ for $j\leq i$ explicitly. If $p=\mc P(j)$ for any $j\leq i$, then we are done. We may thus assume that  $p\neq\mc P(j)$ for any $j\leq i$.
		
		As the conjugacy problem is solvable, we may determine if $$
		g_1=a_0a_1a_2\cdots a_{3^{i}-1}
		$$ and $$g_2=g_1c_1^{\frac{f(i)}{p^{i}}}$$
		are conjugate in $G_{d_f}$. By \Cref{prop:TranslateConjToMembership}, this allows us to determine if $\mc P'(j)=p$ for some $j>i$. But, as $p\neq\mc P(j)$ for any $j\leq i$,  this happens precisely when $p\in \mathrm{Im}(\mc P)$. It would thus follow that the membership problem for $\mathrm{Im}(\mc P)$ is decidable for primes. This is not the case by definition of $\mc P$.
		The result follows from contradiction.
	\end{proof}
	Combining the results of this section we obtain 
	\setcounter{result}{0}
	\begin{result}\label{prop:ExistsConjNotSolvable}
		There exists a finitely generated recursively presented conjugacy separable $3$-step solvable group with solvable word problem, but unsolvable conjugacy problem.
	\end{result}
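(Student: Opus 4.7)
The plan is simply to assemble the pieces built up throughout \cref{sec:conjugacy}: the group $G_d$ with the specific function $d$ constructed in \cref{sec:function} will witness all five requested properties simultaneously.

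First I would fix $G:=G_d$ for the $d$ defined in \cref{sec:function} and verify the structural properties. Finite generation by $S=\{t,a_0\}$ is immediate from the presentation (the relations $ta_it\inv=a_{i+1}$ let us eliminate every $a_i$ and every $c_i=c_1^{d(i)}$). Recursive presentability is the corollary to \Cref{prop:dComputable}, which relies on $\mc P$ (and hence $d$) being computable. For $3$-step solvability, I would point out that $G_d$ is a quotient of $G_0=N_0\rtimes\ZZ$, so its derived series satisfies $G_d^{(1)}\subseteq N$, $G_d^{(2)}\subseteq C$, and $G_d^{(3)}=1$ because $C$ is central, hence abelian.

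Next I would collect the algorithmic properties. Solvability of the word problem is exactly the \namecref{prop:conjsep} proved at the start of \cref{sec:conjugacy}, and unsolvability of the conjugacy problem is the corollary to \Cref{prop:TranslateConjToMembership} (which reduces membership in $\mathrm{Im}(\mc P)$ to the conjugacy problem for $G_d$). For conjugacy separability, the lemma at the end of \cref{sec:function} shows that $d\bmod q$ is periodic for every prime power $q=p^k$, so \Cref{prop:conjsep} applies and yields that $G_d$ is conjugacy separable.

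There is no real obstacle here since every ingredient has already been established; the only thing to check is that no two of the properties interfere with each other, which they do not because they are all proved about the same group $G_d$. The proof is therefore one paragraph assembling references to the earlier results in \cref{sec:conjugacy}.
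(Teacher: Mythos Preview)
Your proposal is correct and matches the paper's approach exactly: the paper simply states ``Combining the results of this section we obtain'' before the result, and your assembly of references (finite generation, recursive presentability via \Cref{prop:dComputable}, $3$-step solvability from the structure of $G_0$, conjugacy separability via \Cref{prop:conjsep} and the periodicity lemma, solvable word problem, and unsolvable conjugacy problem via the corollary to \Cref{prop:TranslateConjToMembership}) is precisely the intended argument, only spelled out in more detail than the paper bothers to.
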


	\subsection{Conjugator length and conjugacy separability}
	As another application of our construction, in this subsection we construct a group with large conjugacy separability growth, but small residual finiteness growth, efficient conjugacy problem and small conjugator length. These concepts are all related as becomes apparent when studying the time complexity of McKinseys algorithm for the conjugacy problem. Morally the statement gives that the length of conjugators in finite quotients are longer then what would be expected from the length of conjugators in the original group.
	
	We again start by defining a sequence $f:\NN\rightarrow\NN$ as in \cref{sec:function}.
	Let $p_i$ be a computable enumeration of the primes such that $p_2=3$ (for instance the usual enumeration)\begin{itemize}
	\item The sequence $f_1$ is valued in the powers of $p_1$ and is weakly increasing, but slowly. The growth rate of this function will control the conjugacy separability growth. Morally, $f_1$ should increase as slowly as possible, but for now we only lay the constraint that $f_1(i)\leq \exp(i)$.
	Furthermore, $f_1$ should be computable in polynomial time.
	
	\item The sequence $f_2$ is exponential and valued within the powers of $p_2$, that is $f_2(i)=p_2^{i}$. 
	This second sequence will bound the residual finiteness growth.
	
	\item The sequence $f_3$ is weakly increasing such that $p_1,p_2$ do not divide any entry of $f_3$, such that $f_3(i)\leq \exp(i)$, such that $f_3(i)\mid f_3(i+1)$, and such that for any integer $q$ with $p_1,p_2\nmid q$, there exists some integer $i_q$ such that $q\mid f_3(i_q)$.
	This final sequence will guarantee that the group we end up with is conjugacy separable.
	\end{itemize}
	\begin{lemma}
		Such a sequence $f_3$ exists and can be chosen to be computable.
	\end{lemma}
	\begin{proof}
		We construct the sequence $f_3$ inductively, for this we construct a sequence of triples $(m,q,l)_i$ where $m_i$ is an initial segment of a sequence of length $l_i$ such that $m_i(l_i)=q_i$. We start with 
		$m_0=(1), q_0=1$ and $l_0=1$. Now we proceed inductively: 
		Let $l_{i}=2l_{i-1}+\lceil\log(p_{i+2})\rceil$, let $q_i=q_{i-1}^2p_{i+2}$ and let $m_i$ be the finite sequence obtained by concatenating $m_{i-1}$, a sequence of $l_{i-1}+\lceil\log(p_{i+2})\rceil-1$ entries of $q_{i-1}$'s and a single $q_i$.
		Notice for $i<j$, $m_i$ is an initial segment of $m_j$. We can thus define $f_3$ to be the limit sequence of $m_i$, or thus the sequence such that for $i,j\geq 0$ such that $l_j\geq i$, $f_3(i)=m_j(i)$.
		We show that $f_3$ has the desired properties.
		
		Weakly increasing follows as each of the sequences $m_i$ is weakly increasing. The primes $p_1$ and $p_2$ do not divide $q_0$ and by induction they divide none of the $q_i$, if follows that if $m_i$ contains no multiples of $p_1,p_2$.
		
		Clearly $q_0=1\leq \exp(l_0)$ and by induction, if $q_{i-1}\leq \exp(l_{i-1})$, then $$q_i^2p_{i+2}\leq \exp(2l_{i-1}+\lceil \log(p_{i+2})\rceil)=\exp(l_i).$$
		
		Finally, let $p_i^{2^j}$ be some prime power, then $p_i$ divides $q_{i-2}$, and thus is $p_i^{2^j}\mid q_{i+j-2}$.
	\end{proof}
	We now define $f(i)=f_1(i)f_2(i)f_3(i)$, and we define $d=d_f:\ZZ\rightarrow\ZZ$ as in \cref{def:d} by 
	$$
	d(i)=\begin{cases}
		\begin{matrix}
			0&i=0\\
			f(j)&i\cong3^j\mod 3^{j+1}\\
			-f(j)&i\cong -3^j\mod 3^{j+1}.
		\end{matrix}
	\end{cases}
	$$
	Notice that if $f_1,f_2$ and $f_3$ are computable, then so is $d$. Furthermore, by construction of $f$, we have by \Cref{prop:conjsep} that $G_d$ is conjugacy separable.
	
	We begin by demonstrating that the conjugacy separability growth is large.
	Denote $f_1\inv$ the right semi-inverse of $f_1$, that is the function $f_1\inv:\NN\rightarrow\NN$ such that 
	$$f_1\inv(i)=\min\{j\in\NN\mid f_1(j)\geq i\}.$$
	\begin{proposition}
		Let $d$ be as above then $\conj_{G_d}\succ 3^{f_1\inv}$.
	\end{proposition}
	\begin{proof}
		Let $q$ be some power of $p_1$, consider then $g_1=a_0^q$ and $g_2=a_0^qc_1^{\frac{q}{p_1}}$.
		Notice that $g_1$ and $g_2$ are non-conjugate. 
		Conjugating with powers of $a_1$ shows that $g_1$ is conjugate to every element of the form $a_0^qc_1^{kq}$.
		Let $\varphi:G_d\rightarrow Q$ be a finite quotient, and suppose the order of $Q$ is less than $3^{f_1\inv(q)}$. In particular, the order of $\varphi(t)$ is less than $3^{f_1\inv(q)}$, call this order $I$. As $\varphi(t^I)=1$, it follows that $\varphi([t^Ia_0t^{-I},a_0])=1$ and thus that $\varphi(c_1^{d(I)})=1$. As $\abs{I}\leq 3^{f_1\inv(q)}$, it follows that $d(I)$ is not a multiple of $q$. In particular is $\frac{q}{p_1}=kq\mod d(I)$ for some $k\in\ZZ$ or thus is $\varphi(g_1)$ conjugate to $\varphi(g_2)$.
		As $g_1$ and $g_2$ have word norm at most $q+6$ with respect to $S$, it follows that $\conj_{G_d,S}(p_1i+6)\geq 3^{f_1\inv(i)}$.
	\end{proof}
	Contrary to the conjugacy separability growth, the residual finiteness growth remains tame. To demonstrate this, we first show that $\abs{d(i)}$ is not too large compared to $i$.
	\begin{lemma}\label{prop:dBound}
		There exists some integer $\delta$ such that $\abs{d(i)}^\delta\leq \abs{i}$.
	\end{lemma} 
	\begin{proof}
		First notice that $f(i)\leq \exp(ci)$ for some constant $c$, this follows immediately from the conditions on $f_1,f_2$ and $f_3$.
		Notice furthermore that for $i$ with $\abs{i}<3^{j_0}$, $\abs{d(i)}=f(j)$ for some $j<j_0$. It thus follows that $\abs{d(i)}\leq f(\lfloor\log_3{i}\rfloor)\leq \exp(c\lfloor\log_3{i}\rfloor).$
		The result follows.
	\end{proof}
	\begin{remark}
		The constant $\delta$ can be lowered by tweaking the definitions of $f_1,f_3$.  If one maintains $p_2=3$, then any value of $\delta>1$ can be obtained for some sequence $f_1,f_2$.
	\end{remark}
	\begin{corollary}\label{prop:centerSizeBound}
		Let $c_1^k\in G_d$ be of norm $\norm{c_1^k}_S\leq m$, then for $\delta$ as above we have $\abs{k}\leq cm^{2+\delta}$. Where $c$ is some constant independent of $k$.
	\end{corollary}
	\begin{proof}
		 This is immediate as by $\Cref{prop:supportC}$, $c_1^k$ is of norm at most $m^2$ with respect to the generating set
		 $\{c_1^{\pm f(i)}\mid 3^i\leq m\}$.		 
		 
	\end{proof}
	\begin{proposition}
		Let $\delta$ be as above, then $\rf_G\prec \left(n^{2+\delta}\right)^{n^{2+\delta}}$.
	\end{proposition}
	\begin{proof}
		Let $g\in G_d$ be non-trivial and suppose $\norm{g}_S\leq m$.
		By \Cref{prop:rflamplighter}, we know that $\rf_{\ZZ\wr\ZZ}$ is at most polynomial, so we might restrict ourselves to the case where $g=c_1^k$. Furthermore by \Cref{prop:centerSizeBound} we know that $\abs{k}\leq cm^{2+\delta}$. Let $q$ be some power of $p_2$ such that $k<q\leq kp_2$. Let $I$ be a period of $d\mod q$, by construction of $f_2$, such a period is given by $q$ itself (where we use that $p_2=3$).
		
		Let $K$ be the normal subgroup of $G_d$ given by $\llangle \{t^{q}, a_i^q,a_ia_{i+q}\inv,c_1^q\mid i\in \ZZ\}\inv \rrangle$. This subgroup intersects $C$ in $C^q$ and thus is $g=p_1^k$ not an element of $K$.
		The order of $\frac{G_d}{K}$ is given by $q^{2+q}$ which is bounded by $$
		\left(3cm^{2+\delta}\right)^{2+3Cm^{2+\delta}}.
		$$
		The result follows.
	\end{proof}
	
	Now what is left is to show that the conjugacy problem on $G_d$ is efficient, and that the conjugator length function is small.
	For both of these, given two elements $g_1,g_2\in G_d$, we first reduce to the case where $g_1C$ and $g_2C$.
	\begin{lemma}\label{prop:conjugatorZwrZ}
		The conjugacy problem of $\ZZ\wr\ZZ$ is polynomial time solvable, furthermore if $g_1,g_2\in\ZZ\wr\ZZ$ are conjugate, then one finds in polynomial time an element $h\in\ZZ\wr\ZZ$ of norm at most linear in $\norm{g_1},\norm{g_2}$ such that $hg_1h\inv=g_2$.
	\end{lemma}
	\begin{proof}
		Polynomial time solvability of the conjugacy problem follows from \cite{vassileva2011polynomial}, the length of $h$ follows from \cite{sale2016conjugacy}. They also give an explicit construction of $h$ which is clearly computable in polynomial time.
	\end{proof}

	\begin{lemma}\label{prop:narrowConjugator}
		Let $f':\NN\rightarrow\NN$ be such that $f'(i)\mid f'(i+1)$
		Let $g_1,g_2\in G_{d_{f'}}$, such that $g_1C=g_2C$ with $\norm{g_1}_S,\norm{g_2}_S\leq m$. Suppose that $g_1,g_2\in N$ and suppose that $g_1\sim g_2$, then there exists some $h\in \langle\{a_i\mid \abs{i}\leq \tilde m\}\rangle$ such that $hg_1h\inv=g_2$ where $\tilde m$ is the least power of $3$ such that $\tilde m>2m+1$.
	\end{lemma}
	\begin{proof}
		By \Cref{prop:conjInSubgroup}, $g_1\sim g_2$ if and only if $g_2g_1\inv\in [g_1,N]$.
		A set of generators of $N$ can be given by $$\{a_i\mid \abs{i}\leq \tilde m \}\cup \{a_{i'}a_i\inv\mid \abs{i}\leq \tilde m, \abs{i'}\geq m, i\cong i'\mod \tilde m\}$$ and thus is a set of generators of $[g_1,N]$ given by $$\{[g_1,a_i]\mid \abs{i}\leq \tilde m \}\cup \{[g_1,a_{i'}a_i]\inv\mid\abs{i}\leq \tilde m, \abs{i'}\geq m, i\cong i'\mod \tilde m\}.$$
		
		If we write $g_1$ as $a_{-m}^{e_{-m}}a_{-m+1}^{e_{-m+1}}\cdots a_{m}^{e_{m}}c^k$, then by \Cref{prop:SelectorCommutator}, the second of these sets is given by$$
		\{
		c_1^{e_if'(\log_3(i'-i))}\mid \abs{i}\leq m, i\cong i'\mod \tilde m 
		\}
		$$
		As $f'(j)\mid f'(j+1)$, we have in particular that the above subgroup is also generated by the smaller set$$
		\{
		c_1^{e_if'(\log_3(\tilde m))}\mid \abs{i}\leq m, \abs{j}\geq m
		\}
		$$
		which is also given by $\{[g_1,a_{i'}a_i\inv]\mid i\in [-m,m], i'\in[-\tilde m,\tilde m]\}$, where we use that for any $i\in [-m,m]$, there exists some $i'\in [-\tilde m,\tilde m]\backslash[-m,m]$ such that $\abs{i-i'}=\tilde m$.  It thus follows that $[g_1,N]$ is contained in (equal to) $[g_1,\langle{S_{\tilde m}}\rangle]$.
		In particular, if there exists some $h$ such that $g_2g_1\inv=[g_1,h]$, then it can be chosen in $\langle{S_{\tilde m}}\rangle$.

	\end{proof}
	From the above lemma, it follows quite straightforwardly that both the conjugacy problem is efficient, and that conjugators are short.
	For the latter, we use an effective version of Bézout's identity.
	\begin{lemma}\label{prop:gcdintegers}\change{lemma got adapted}
		Let $e_0,e_1,\cdots e_m\in \ZZ$, with greatest common divisor $e$. Let $e'$ be some multiple of $e$ such that $\abs{e'}\leq\min(\abs{e_i})$, then there exists $k_0,k_1,\cdots k_m$ in $\ZZ$ such that $\sum\abs{k_i}\leq \sum\abs{e_i}$ and such that $\sum{e_ik_i}=e'$.
	\end{lemma}
	\begin{remark}\change{This is a new remark}
		In particular the above holds for $e'=e$.
	\end{remark}
	\begin{proof}\change{proof got adapted}
		Without loss of generality, we may assume that the $e_i$ are distinct and in ascending order. By Bézout's identity, there exist coefficients $k_i$ such that $\sum{e_ik_i}=e'$. If we add $e_i$ to $k_0$ and subtract $e_0$ from $k_i$, then we again maintain $\sum{e_ik_i}=e'$.
		Pick $\tilde k_i$ for $i\geq 1$ such that $\tilde k_i\cong k_i\mod e_0$, $\abs{\tilde k_i}\leq\abs{e_0}$ and such that $\sum_{i=1}^je_i\tilde k_i-e'\leq\abs{e_0e_j}$ for all $j$. One shows inductively that such $\tilde k_i$ exist. As $\tilde k_i\cong k_i\mod e_0$ for all $i>0$, we have that $\sum_{i=1}^m\tilde k_ie_i-e'$ is a multiple of $e_0$. Furthermore, by construction of $\tilde k_i$, we have have that $\frac{\sum_{i=1}^m\tilde k_ie_i-e'}{e_0}$ is bounded in absolute value by $e_m$.
		The result follows.
	\end{proof}
	This gives us the following:
	\begin{proposition}\label{prop:ConjugatorShort}
		Let $d$ be as before, then $G_d$ has polynomial conjugator length.
	\end{proposition}
	\begin{proof}
		Let $g_1,g_2\in G_d$ be conjugate and both of length at most $m$. By \Cref{prop:conjugatorZwrZ}, we may assume that $g_1C=g_2C$, replacing $g_2$ by some conjugate and $m$ by some multiple of $m$ if necessary.
		
		Suppose that $g_1\notin N$, then it follows by \Cref{prop:conjInSubgroup} that $g_1\sim g_2$ if and only if $g_1=g_2$. In this case we are thus done.
		
		We now assume that $g_1,g_2\in \langle S_m\rangle$. By \Cref{prop:narrowConjugator}, it follows that a conjugator can be found in $N_{\tilde m}=\langle S_{\tilde m}\rangle$. It thus also follows that $g_2g_1\inv\in [g_1,N_{\tilde m}]$. By \Cref{prop:centerSizeBound}, we have that $g_2g_1\inv = c_1^k$ where $k\leq cm^{2+\delta}$ for $c$ some uniform constant. If we denote $c_1^{k_i}=[g_1,v_i]$, then we have by the same \namecref{prop:centerSizeBound} that $k_i\leq c \tilde m^{2+\delta}$. 
		By \Cref{prop:gcdintegers}, we can find coefficients $x_i$ such that $\sum x_ik_i=\gcd\{k_i\mid i\in [-\tilde m,\tilde m]\}$ and such that $\sum \abs{x_i}\leq \sum \abs{k_i}$. A conjugator $h$ is then given by $$
		h=\prod_{i\in [-\tilde m,\tilde m]} a_i^{\frac{x_ik}{\gcd\{k_j\mid j\in [-\tilde m,\tilde m]\}}}
		$$
		which is by the above considerations of polynomial length in $m$.
	\end{proof}
	\begin{proposition}
		Let $d$ be as before, then $G_d$ has polynomial time solvable conjugacy problem.
	\end{proposition}
	\begin{proof}
		Let $g_1,g_2\in G_d$ of length $m$. As the conjugacy problem is polynomial time solvable in $\ZZ\wr\ZZ$, we might assume that $g_1\sim g_2$, furthermore, using \Cref{prop:conjugatorZwrZ}, we find in polynomial time some conjugator $h$ of length linear in $m$ such that $hg_1hC\inv=g_2C$. An explicit formulation of this conjugator is given and it can be computed in polynomial time.
		
		We might thus, as in the proof of \Cref{prop:ConjugatorShort}, assume that $g_1C=g_2C$. Again as in that proof we might assume that $g_1,g_2\in \langle S_m\rangle$.
		In this case it remains to check that given $k$ such that $c_1^k=g_2g_1\inv$ and $k_i$ such that $c_1^{k_i}=[g_1,v_i]$ for $i\in \tilde m$ that $\gcd\{k_i\mid i\in [-\tilde m,\tilde m]\}\mid k$. 
		Notice first that $k$ and $k_i$ can be computed exactly in polynomial time, and that their size is at most polynomial in $m$. It follows that $\gcd\{k_i\}\mid k$ can also be checked in polynomial time.
	\end{proof}
	Combining the results from this sections, we obtain the following
	\begin{result}\label{prop:cLengthCojSep}
		There exists a family of finitely generated recursively presented torsion-free 3-step solvable conjugacy separable groups $G_d$ such that:\begin{itemize}
		\item$\rf_{G_d}\prec \exp(n^\delta)$ for some integer $\delta$; 
		\item$\cl_{G_d}$ is at most polynomial; \item the conjugacy problem is polynomial time solvable; \item for any computable function $\Phi$, there exists some $d$ such that $\conj_{G_d}\succ \Phi$.
		\end{itemize}
	\end{result}
	\begin{proof}
		The only thing that is left to show is that for every computable function $\Phi$ that goes to $\infty$, there exists a smaller function $\Phi'$ that also goes to infinity and that is computable in polynomial time. For this take $\Phi'(n)=\Phi(i)$ where $i$ is maximal such that $\Phi(j)$ can be computed for every $j\leq i$ in time at most $n$. 
	\end{proof}
	\subsection{Arbitrary conjugator lengths}\label{sec:conjugatorLength}
	In this section we specify our groups from \ref{def:cyclicCentre} to obtain arbitrary functions as conjugator length functions.
	\begin{definition}\label{def:dArbitraryConjugatorLength}
		Let $\Phi:\NN\rightarrow\NN$ be a non decreasing function such that $\Phi(i)\geq i^3$.
		Let $p_i$ be the sequence of primes in usual order. Define then $f:\NN\rightarrow\NN$ as $$
		f(j)=\prod\{p_i\mid 3^i\leq 3^j\leq f(3^i) \}
		$$	
	\end{definition}
	We then define $d_f:\ZZ\rightarrow\ZZ$ as in \Cref{def:d}, we will consider the groups $G_{d_f}$
	We will show that $\cl_{G_{d_f}}\simeq \Phi$. when $\Phi$ is sufficiently large.
	Our result will hold for $\Phi$ one of the following: 
	\begin{enumerate}[label=(\alph*)]
		\item $\forall i\in \NN, \Phi(i)\geq i^3$ and $\Phi(3i)\geq 3\Phi(i)$;\label{cond:LocalLarge}\\
		\item $\Phi(i)\geq (i)^{1+\log_3(2)+\log_3\log_3(i)}$.\label{cond:GlobalLarge}\\
	\end{enumerate}
	Through most of our strategy, there is no need to distinguish between the $2$ cases.
	Let $\tilde \Phi$ be the unique function such that $\tilde \Phi$ is always a power of $3$ and such that $\Phi(i)\leq \tilde \Phi(i)< 3\Phi(i)$ for all $i$. Clearly $\Phi\simeq \tilde \Phi$. Thus it suffices to demonstrate $\cl_{G_{d_f}}\simeq \tilde \Phi$. Notice furthermore that $d$ does not change if we replace $\Phi$ by $\tilde \Phi$.
	For notational convenience, define furthermore $\Phi_0$ such that $3^{\Phi_0(i)}=\tilde\Phi(3^i)$.
	
	We have some bounds on $d$.
	\begin{lemma}\label{prop:nUpperBound}
		Let $i\in \ZZ$ and let $j\in\ZZ$ be maximal such that $3^j\leq \abs{i}$, then $\abs{d(i)}\leq p_j^j$.
	\end{lemma}
	\begin{proof}
		This is immediate as $f(j)\leq \prod_{i=1}^jp_i$.
	\end{proof}
	\begin{lemma}\label{prop:dLowerBound}
		Let $j\in\NN_{>9}$, then $d(\tilde \Phi(3^j))\geq p_j^j3^{2j}$.
	\end{lemma}
	\begin{proof}
		By construction, $d(\tilde \Phi(3^j))$ is at least $\prod\{p_i\mid j\leq i\leq \Phi_0(j)\}$. Notice that $p_i\geq p_j$. Notice furthermore that $\tilde \Phi(3^j)\geq 3^{3j} $. In particular, we have that $\{p_i\mid j\leq i\leq \Phi_0(j)\}$ contains at least $2j$ elements.
		It follows that $d(\tilde \Phi(3^j))\geq p_j^{2j}\geq p_j^{j}3^{2j}$.
	\end{proof}
	\begin{lemma}\label{prop:gcd1}
		$\gcd\{f(j),f(1+\Phi_0(j)))\}=1$
	\end{lemma}
	\begin{proof}
		All prime factors of $f(j)$ are at most $p(j)$ and the prime factors of $f(1+\Phi_0(j))$ are at least $p_{j+1}$.
	\end{proof}
	We begin by demonstrating a lower bound to the conjugator length function.
	\begin{lemma}\label{prop:conjWidthLowerBound}
		Let $g_1=a_0a_1a_2\cdot a_{3^j-1}$ and let $g_2=g_1c_1$. Let $N_j=\langle S_{\tilde \Phi(3^j)}\rangle$ and $N_j^+=\langle S_{3\tilde \Phi(3^{j})}\rangle$, then $g_2\notin g_1^{N_j}$ but $g_2\in g_1^{N_j^+}$. 
	\end{lemma}
	\begin{proof}
		By \Cref{prop:comutWithSeries}, $[g_1\inv,D]$ is given by $\langle \set{c_1^{f(i)}\mid j\leq i\leq\Phi_0(j)}\rangle$. By definition of $f$, all numbers $f(i)$ for $3^j\leq 3^i\leq \tilde \Phi(3^j)$ are multiples of $p_j$, the first result follows. On the other hand, by \Cref{prop:gcd1}, $\gcd\{f(j),f(1+\log_3(\tilde \Phi(3^j)))\}=1$. It follows that $\langle[g_1,a_{3^j}],[g_1,a_{3\tilde \Phi(3^j)}]\rangle=c_1^\ZZ$ and thus that $g_2g_1\inv\in [g_1,D_j^+]$ as had to be shown.
	\end{proof}
	\begin{corollary}\label{prop:clLowerBound}
		Let $d$ be as above, then $\cl_{G,S}\succ \Phi$.
	\end{corollary}
	\begin{proof}
		Let $i$ be an integer, and assume $i=3^j$ is a power of $3$, we will show that $\cl_{G,S}(3i+8)> \tilde \Phi(i)$.
		Indeed, Let $g_1,g_2$ be as in \Cref{prop:conjWidthLowerBound}, notice that $g_1=(a_0t)^jt^{-j}$ is of word norm at most $3j$ and that $g_2=g_1[a_0,t\inv a_0t]$ is thus of word norm at most $3^{j+1}+8$. Furthermore by the second part of \Cref{prop:conjWidthLowerBound}, $g_1$ and $g_2$ are conjugate. Let $h\in G_{d_f}$ be such that $hg_1h\inv=g_2$. By \Cref{prop:Centraliser}, $h\in D$. Assume from contradiction that $\norm{h}_S\leq \tilde \Phi(3^j)$, then by \Cref{prop:support} we have that $h\in \langle S_\Phi(3^j)\rangle$. By the first part of \Cref{prop:conjWidthLowerBound}, such an $h$ does not exist.
	\end{proof}
	\begin{remark}
		Notice that the above lower bound also works in more generality if we replace $\Phi$ with a smaller function, however in that case the upper bound will no longer hold.
	\end{remark}
	To proceed with the upper bound, we first characterise when two elements of $D$ are conjugate.
	\begin{lemma}
		Let $g_1=c_1^{e_c}\prod a_i^{e_i}$ be some finite product and $g_2=g_1c_1^e$, then $g_1$ and $g_2$ are conjugate in $G_{d_f}$ if and only if $\lcm\{e_i\}\mid e$.
	\end{lemma}
	\begin{remark}Notice that every element of $N$ can be brought into the form of $g_1$ above.\end{remark}
	\begin{proof}
		As $c_1$ is central, we might assume that $e_c=0$.
		Let $h$ such that $hg_1h\inv=g_2$, by \Cref{prop:Centraliser}, $h\in N$. It thus suffices to show that $g_2g_1\inv\notin[g_1,N]$.
		
		First assume that $\lcm\{e_i\}\nmid e$.
		Let $k_j$ such that $c_1^{k_j}=[g_1,a_j]$. This commutator can be expanded as\begin{align*}
			c_1^{k_j}&=[\prod a_i^{e_i},a_j]\\
				 &=\prod [a_i,a_j]^{e_i}\\
				 &=c_1^{\sum e_i d(i-j)}\\
		\end{align*}
		This last exponent is a multiple of $\lcm\{e_i\}$. By bi-linearity, it follows that $[g_1,N]\subset c_1^{\ZZ\gcd\{e_i\}}$. As $c_1^{e}$ is not an element of this subgroup, it follows that $g_1\nsim g_2$.
		On the other hand, suppose that $\lcm\{e_i\}\mid e$, then there exist coefficients $k_i\in\ZZ$ such that $\sum k_i e_i=e$. On the other hand, Let $m$ be such that $g_1\in \langle S_m\rangle$, let $j$ be such that $3^j> 2m$, by \Cref{prop:gcd1}, there exist coefficients $l_1,l_2\in \ZZ$ such that $l_1f(j)+l_2f(1+\Phi_0(j))=1$. Define then $$
		h=\prod a_i^{k_i(l_1+l_2)} a_{i+3^j}^{-k_il_1} a_{i+3\tilde \Phi(3^j)}^{-k_il_2}.
		$$
		We claim $[g_1,h]=c_1^{e}$.
		Indeed by \Cref{prop:SelectorCommutator}, we might rewrite\begin{align*}
			&[\prod a_i^{e_i},\prod a_i^{k_i(l_1+l_2)} a_{i+3^j}^{-k_il_1} a_{i+3\tilde \Phi(3^j)}^{k_il_2} ]\\
			=&\prod_{i=-m}^m c_1^{e_ik_il_1f(j)+e_ik_il_2f(1+\Phi_0(j))}\\
			=&c_1^{\left(\sum e_ik_i\right)\left(l_1f(j)+l_2f(1+\Phi_0(j))\right)}\\
			=&c_1^{e}.
		\end{align*}
		which we had to demonstrate.
	\end{proof}
	To prove the upper bound, we must estimate the coefficients $k_i$ and $l_i$ from the above proof.
	The coefficients $k_i$ are already bounded by \Cref{prop:gcdintegers}. For the other coefficients $l_i$, we provide $2$ lemmas to distinguish between the $2$ cases \ref{cond:GlobalLarge} and \ref{cond:LocalLarge}: The first lemma works best when $\Phi$ satisfies \ref{cond:GlobalLarge}
	\begin{lemma}\label{prop:gcdSequenceLarge}
		Let $f$ be as in \Cref{def:dArbitraryConjugatorLength} and let $e<p_{j_0}^{j_0}3^{2j_0}$ for $j_0\in\NN$. Let $j=1+\Phi_0(j_0)$. Then there exist $l_{j_0},l_{j_0+1},\cdots,l_j$ such that $\sum \abs{l_i}\leq f(j_0)+2p_j(j-j_0)$ and such that $\sum l_if(i)=e$. 
	\end{lemma}
	\begin{proof}
		\sloppy
		By construction of $f$, There exists some finite sequence $x_i$ such that $x_{i+1}\mid x_i$ and such that $f(i)=x_i\prod_{k=j}^{i}p_k$ for every $i\in[j_0,j)$.
		By \Cref{prop:dLowerBound}, $f(j)\geq \abs{e}$. As by \Cref{prop:gcd1}, $\gcd\{f(j_0),f(j)\}=1$, there exists some constant $l_j$ such that $\abs{l_j}\leq f(j_0)$ and such that ${e-l_jf(j)\cong 0\mod f(j_0)}$.
		notice that $\abs{e-l_jf(j)}\leq 2f(j)f(j_0)$. Consider the sequence $y(i)=\frac{f(i)x(j_0)}{x(i)}$. Notice that $\frac{y(i)}{y(i-1)}=p_i$. Notice that $2p_jy(j-1)\geq f(j_0)f(j)$. We can thus find integers $l'_i\in\ZZ$ such that $\abs{l'_i}\leq p_{i+1}$ for $i<j$ and $\abs{l'_{j-1}}\leq 2p_j$ and such that $\sum_{i=j_0}^{j-1} l'_iy(i)=e-l_jf(j)$. If we then define $l_i=l'_i\frac{x(j_0)}{x(i)}$, then we have $\sum_{i=j_0}^j l_if(i)=e$. Furthermore, from each of the individual bounds on $l_i$, we obtain that $\sum \abs{l_i}\leq f(j_0)+2p_j(j-j_0)$.
	\end{proof}
	We change our setup for the case when $\Phi$ satisfies \ref{cond:LocalLarge}.
	\begin{lemma}\label{prop:nIncreasing}
		Let $\Phi$ satisfy \ref{cond:LocalLarge}, then $f$ is increasing.
	\end{lemma}
	\begin{proof}
		Let $p_i$ divide $f(j)$, then must $3^i\leq3^j\leq \Phi(3^i)$. It follows that $3^{i+1}\leq 3^{j+1}\leq \Phi(3^{i+1})$, or thus must $p_{i+1}$ still divide $f(j+1)$. In particular, $f(j+1)$ might lose at most one prime factor compared to $f(j)$, and this prime factor has index $i<j+1$. It follows that $$f(j+1)\geq f(j)\frac{p_j}{p_i}\geq f(j).$$
	\end{proof}
	\begin{lemma}\label{prop:gcdSequenceSmall}
		Let $f$ be as before, let $\Phi$ satisfy \ref{cond:LocalLarge} and let $j_0\in \NN_{>2}$. Let $e<p_{j_0}^{j_0}3^{2j_0}$ and let $j=1+\Phi_0(i)$. Then there exist $l_{j_0},l_{j_0+1},\cdots,l_j$ such that $\sum \abs{l_i}\leq jp_j$ and such that $\sum l_if(i)=e$. 
	\end{lemma}
	\begin{proof}
		\sloppy
		\newcommand{\emax}{{e_{\mathrm{max}}}}
		Let $\emax$ denote $p_{j_0}^{j_0}3^{2j_0}$
		Notice that by \Cref{prop:nIncreasing}, that $f$ is increasing.
		We first prove inductively that for any $j_0\leq j'<j$ there exist constants $l_{i}$ for $j_0\leq i< j'$ such that $l_i\leq p_{i+1}$, $$\prod_{i=j_0+1}^{j'}p_i\mid e+\sum _{i=j_0}^{j'-1}l_{i} f(i)$$ and $$
		\abs{e+\sum _{i=j_0}^{j'-1}l_{i} f(i)}\leq \max\{p_{j'}f(j'-1),\emax\}.
		$$
		For the base step, when $j'=j_0$ the result holds by the assumptions. Suppose the result holds for $j'$, let then $l_i$ be such that the result holds and assume without loss of generality that ${e_{j'}=e+\sum _{i=j_0}^{j'-1}l_{i} f(i)}$ is positive. Let now $l_{j'}\in\ZZ_{\leq 0}$ be maximal (minimal in absolute value)  such that ${e_{j'}+l_{j'}f(j')\cong 0\mod p_{j'+1}}$. Such an integer always exists as $p_{j'+1}$ does not divide $f(j')$, and it can be chosen such that $\abs{l_{j'}}\leq p_{j'+1}$. By construction of $l_{j'}$, we have that $p_{j'+1}$ divides $e+\sum _{i=j_0}^{j'}l_{i} f(i)=e_{j'+1}$ and as $\prod_{i=j_0+1}^{j'}p$ divides both $e_{j'}$ and $f(j')$, it must also divide $e_{j'+1}$. Finally we have $\max\{p_{j'}f(j'-1),\emax\}\geq e_{j'+1}\geq -p_{j'+1}f(j')$, implying that $\abs{e_{j'+1}}\leq\max\{p_{j'+1}f(j'),\emax\}$. This completes the induction argument.
		
		We thus have that $e_j$ is a multiple of $\prod_{i=j_0+1}^{j}p_i$ that is at most $\max\{p_{j}f(j-1),\emax\}$. By \Cref{prop:dLowerBound}, this is less then $p_jf(j)$.
		We can thus find $l_j$ with $\abs{l_j}\leq p_j$ such that ${e+\sum _{i=j_0}^{j-1}l_{i} f(i)+l_j f(j)=0}$.
		The result follows as $\sum \abs{l_j}\leq jp_j.$		
	\end{proof}
	
	We now conclude our proof on the conjugator growth:
	\begin{proposition}
		Let $f$ be as before, then $\cl_{G_{d_f}}\prec \Phi$.
	\end{proposition}
	\begin{proof}
		Let $g_1,g_2\in G_{d_f}$.
		By \Cref{prop:conjugatorZwrZ}, we might assume that $g_1C=g_2C$. If $g_1,g_2\notin N$, then by \Cref{prop:Centraliser} we are done. We might thus assume $g_1,g_2\in N$. Suppose that $\norm{g_1}_S,\norm{g_2}_S\leq 3^{j_0-1}$. Then $g_2g_1\inv$ is of norm at most $2\cdot 3^{j_0-1}$. In particular by \Cref{prop:supportC}, $g_2g_1\inv=c_1^{e}$ where $e\leq 4\cdot 3^{2j_0-2}f(j_0)$.
		We might write $g_1=c_1^{e_c}\prod a_i^{e_i}$. By \Cref{prop:support}, the product runs over indices $-m\leq i\leq m$ and $\sum \abs{e_i}\leq m$. 
		Let $\tilde e$ be the greatest common divisor or the exponents $e_i$. By \Cref{prop:gcdintegers}, there exist coefficients $k_i$ such that $\sum{e_ik_i}=\tilde e$. Furthermore, let $j=1+\Phi_0(j_0)$, then by \Cref{prop:gcdSequenceLarge} or \Cref{prop:gcdSequenceSmall}, there exist integers $l_i$ where the indices run from $j_0$ to $j$ such that $\sum \abs{l_i}\leq f(j_0)+2p_j(j-j_0)$ in case of \ref{cond:GlobalLarge} or $\sum \abs{l_i}\leq jp_j$ in case of \ref{cond:LocalLarge}.
		
		Let now $h$ be given by $$
		h=\prod_{i'=j_0}^j\prod_{i=-m}^m a_{i}^{l_{i'}k_i}a_{i+3^{i'}}^{-l_{i'}k_i}.
		$$
		First we demonstrate that $[g_1,h]=c_1^{e}$. Indeed, by \Cref{prop:SelectorCommutator}, we have\begin{align*}
			&[\prod a_i^{e_i},\prod_{i'=j_0}^j\prod_{i=-m}^m a_{i}^{l_{i'}k_i}a_{i+3^{i'}}^{-l_{i'}k_i}]\\
			=&\prod_{i'=j_0}^j\prod_{i=-m}^m c_1^{e_{i}k_{i}(l_{i'} f(i'))}\\
			=&c_1^{\sum_{i'=j_0}^j l_{i'} f(i')\sum_{i=-m}^m(e_ik_i)}\\
			=&c_1^{\frac{e}{\tilde e}\tilde e}
		\end{align*}
		With this we have shown that $h\inv g_1h=g_2$. Now it is left to demonstrate that there exists some $c\in C$ such that $\norm{ch}_S$ is not too large compared to $\Phi(3^{j_0})$. 
		For this, we first compute $\norm{h}_{S_{3^j}}$. This norm is bounded above by $2\sum \abs{l_i}\sum\abs{k_i}$. The second of these factors is bounded by $3^{j_0-1}$.
		
		The computation of the first depends on the $2$ cases for $\Phi$.
		In case \ref{cond:LocalLarge}, we have an upper bound of $\sum \abs{l_i}\leq jp_j$. Notice that $p_j$ is at most $2j^2<C3^{\frac{j}{2}}<c\frac{\Phi(3^{j_0})}{3^{j_0}}$.
		In the second case, this factor is bounded above by $f(j_0)+2p_j(j-j_0)$. $p_j$ is again at most $2j^2$. On the other hand $f(j_0)$ is by \Cref{prop:nUpperBound} at most $p_{j_0}^{j_0}$ which can be bounded by $(3^{j_0})^{\log_3(2)+\log_3\log_3(3^{j_0})}$. By the assumption on $\Phi$, this is less then $\frac{\Phi(3^{j_0})}{3^{j_0}}$.
		
		In both of these cases, we obtain that $2\sum \abs{l_i}\sum\abs{k_i}$ is bounded by $2c\Phi(3^{j_0})$. We now use \Cref{prop:supportInverse} to find some $h'$ such that $hC=h'C$ and such that $\norm{h'}\leq (2c+4)\Phi_0(3^{j_0})$.
		
		The result follows as $[g_1,h']=[g_1,h]$.
	\end{proof}
	With the above proposition, and with \Cref{prop:clLowerBound}, we obtain the main result of our section:
	\setcounter{result}{2}
	\begin{result}\label{prop:clSpectrum}
		Let $\Phi:\NN\rightarrow\NN$ be a function satisfying \ref{cond:LocalLarge} or \ref{cond:GlobalLarge}. Then there exists some torsion-free, finitely generated, $3$-step solvable group such that $\cl_G\simeq \Phi$. Furthermore if $\Phi$ is computable, then $G$ can be chosen to be recursively presented.
	\end{result}
	\change{question 1 was moved to new subsection}
	
	\subsection{Superlinear and subquadratic conjugator length}\label{sec:Linear/Quadratic}\change{new subsection}
	In the previous \namecref{sec:conjugatorLength}, the focus lied on obtaining large functions as $\cl_G$. 
	In this \namecref{sec:Linear/Quadratic}, we complement it by giving a group $G$ where the conjugator length function is superlinear but subquadratic. In particular, we construct a group where $n^{\frac{4}{3}}\prec\cl_{G}(n)\prec n^{\frac{5}{3}}$.

	Let $m_i$ be the sequence, defined inductively by $m_0=6$ and $m_{i+1}=\lfloor(m_i)^{\frac{3}{2}}\rfloor$. We define the second sequence $\tilde m_i=\lfloor\frac{m_i}{2}\rfloor$. Notice that these two sequences have disjoint images.\footnote{6 could be replaced by a larger integer.}
	
	We now define the function $d:\ZZ\rightarrow\ZZ$ by $$
	d(n)=\begin{cases}
		\begin{matrix}
			n&\exists i:\abs{n}=m_i\\
			\mathrm{sign}(n)&\exists i:\abs{n}=\tilde m_i \mathrm{or} \abs{n}=1\\
			0&\text{otherwise}
		\end{matrix}
	\end{cases}
	$$
	In the previous \namecref{sec:conjugatorLength}, the arithmetic behaviour of the functions $d_f$ played an important role, here however only the growth characteristics of $d$ are of importance.
	\begin{proposition}\label{prop:clIntermediateLower}
		$\cl_{G_d}(n)\succ n^{\frac{4}{3}}$
	\end{proposition}
	\begin{proof}
		Let $n>0$ be sufficiently large, and let $I$ be maximal such that $m_I\leq n$. Then the element $g_2=a_0c_1^{n^2m_I}=a_0[a_0^n,a_{m_I}^n]$ is of word norm at most $4m_I+4n+1\leq 8n+1$ with respect to $S$. 
		Furthermore this element is conjugate to $g_1=a_0$ as is exemplified by $g_2=a_{m_I}^{-n^2}g_1a_{m_I}^{n^2}$. Suppose that $h$ with $\norm{h}_S\leq \frac{n^{\frac{4}{3}}}{2}$ is such that $hg_1h\inv=g_2$. First because of \Cref{prop:Centraliser}, $h\in N$.
		Then, applying \Cref{prop:support}, we have that $h$ must be given, up to some element of $C$ by $$
		\prod_{i=-\lfloor\frac{n^{\frac{4}{3}}}{2}\rfloor}^{\lfloor\frac{n^{\frac{4}{3}}}{2}\rfloor} a_i^{e_i}
		$$
		where $\sum\abs{e_i}\leq \frac{n^{\frac{4}{3}}}{2}$. We may now estimate the exponent of $[g_1,h]=c_1^{e}$ as $e=\sum d(i)e_i\leq\frac{n^{\frac{8}{3}}}{4}$. As $n^2m_I$ is at least $n^2(n^{2/3}-1)$, it follows that $hg_1h\inv\neq g_2$ and thus from contraposition it follows that $\cl_{G_d,S}(8n+1)> \frac{n^{\frac{4}{3}}}{2}\succ n^{\frac{4}{3}}$.
	\end{proof}
	For the upper bound we first demonstrate the following \nameCref{prop:spacingOfSequence} on the sparsity of $d$.
	\begin{lemma}\label{prop:spacingOfSequence}
		For all $\epsilon>0$ there exists some $N_0>0$ such that if $n>N_0$ and $m_I>n$, then $\tilde m_{I+1}-m_{I}>m_{I}- \tilde m_{I}>\tilde m_{I}- m_{I-1}>\frac{n}{2+\epsilon}$
	\end{lemma}
	\begin{proof}
		As soon as $m_{I-1}>4$, all the inequalities except potentially the last hold automatically. Asymptotically, $\tilde m_{I}- m_{I-1}$ grows like $m_I(\frac{1}{2}-m_I^{-\frac{2}{3}})$. It follows that for $m_I$ sufficiently large that $\tilde m_{I}- m_{I-1}>m_I\frac{1}{2+\epsilon}$ implying the result.
		\end{proof}
	\begin{proposition}\label{prop:clIntermediateUpper}
		$\cl_{G_d}(n)\prec n^{\frac{5}{3}}$
	\end{proposition}
	\begin{proof}
		Let $n$ be arbitrary and let $g_1,g_2$ be such that $\norm{g_1}_S,\norm{g_2}_S\leq n$. First, by \cref{prop:conjugatorZwrZ}, we may assume that $g_1C=g_2C$, increasing $n$ by at most a constant factor in the process.
		Now by \Cref{prop:conjInSubgroup}, we may assume that $g_1,g_2\in N$. Furthermore, by the same \namecref{prop:conjInSubgroup}, it suffices to find some $h$ of sufficiently short word norm such that $g_2g_1\inv=[h,g_1]$
		
		Let $I$ be maximal such that $m_I\leq 5n$. By \Cref{prop:support}, we can rewrite $g_1C$ as $$
		\prod_{i=-n}^na_i^{e_i}C
		$$
		where $\sum \abs{e_i}\leq n$ and by \Cref{prop:supportC} we may rewrite $g_2g_1\inv$ as $c_1^e$ where $e\leq 4n^2m_I$.
		Let $\tilde e$ be the greatest common divisor of the integers $e_i$. Clearly, $[N,g_1]$ is contained in $\langle c_1^{\tilde e}\rangle$, thus by \Cref{prop:conjInSubgroup}, $\tilde e$ must divide $e$. By \cref{prop:spacingOfSequence} we have for $n$ sufficiently large that $\tilde m_{I+2}-m_{I+1}>m_{I+1}- \tilde m_{I+1}>\tilde m_{I+1}- m_{I}>2n+1$.
		In particular, we have for $j\in[-n,n]$ that $[g_1,a_{j+\tilde m_{I+1}}]=c_1^{e_j}$ and $[g_1,a_{j+{m_{I+1}}}]=c_1^{e_jm_{I+1}}$. Let $j_0$ be such that $e_{j_0}$ is minimal among the non-$0$ entries, we can now find $\abs{\lambda_1}\leq\frac{e}{e_{j_0}m_{I+1}}\leq 4n^2\frac{m_I}{m_{I+1}}$ and $\abs{\lambda_2}<m_{I+1}$ such that $e'=e-\lambda_1e_{j_0}m_{I+1}-\lambda_2e_{j_0}$ satisfies $ e'\leq e_{j_0}$.
		Notice that $\tilde e\mid e'$. By \Cref{prop:gcdintegers}, there exist integers $k_{-n},\cdots k_n$ such that $\sum \abs{k_i}\leq \sum\abs{e_i}$ and such that $\sum k_ie_i=e'$.
		Let $h$ now be given by $$h= a_{j_0+ m_{I+1}}^{-\lambda_1}a_{j_0+\tilde m_{I+1}}^{-\lambda_2}\prod_{i=-n}^n a_{i+\tilde m_{I+1}}^{-k_i}.$$
		We compute the commutator $[h,g_1]=c_1^{m_{I+1}\lambda_1+\lambda_2 +\sum k_ie_i}=c_1^e$.
		By \Cref{prop:supportInverse} we may now find $h'$ with $h'C=hC$ such that 
		$$\norm{h'}\leq 4(n+m_{I+1}) + n+4n^2\frac{m_I}{m_{I+1}}+m_{I+1}.$$
		As $N$ is $2$-step nilpotent, we thus also have $[h',g_1]=g_2g_1\inv$.
		The result now follows as $m_{I+1}\leq (6n)^{\frac{3}{2}}$ and $\frac{m_I}{m_{I+1}}\leq (6n)^{\frac{-1}{3}}$.
	\end{proof}
	
	\begin{question}
		Which other functions appear as conjugator length functions?
		Can $\cl_G$ be sublinear?
	\end{question}
	\section{Intermediate growth}\label{sec:intermediate}
	In this section we demonstrate that there exists a group with intermediate residual finiteness growth. We will need some sequences of specific growth types, for this we first recall compositional roots.
		\subsection{Compositional roots}
	In what follows we make use of \textbf{compositional $n$-th roots} of the exponential function. That is a function $\mf f:\RR\rightarrow \RR$ such that $\mf f^n(x)=\exp(x)$. Here $f^n(x)$ denotes $f\comp f^{n-1}$ where $f^1(x)=f(x)$. For (multiplicative) powers of a function $f$, we will include the argument and denote $f(x)^n$.
	Such roots exist and can even assumed to be monotone. 
	We give a classical way of constructing such $n$-th roots of $\exp$:
	
	Let $0=c_0<c_1<c_2<\cdots<c_{n-1}<c_n=1$ be constants. For $0\leq i\leq n-2$, define $\mf f_0:[c_i,c_{i+1}]\rightarrow\RR$ as the linear function $$x\mapsto c_{i+1}+\frac{c_{i+2}-c_{i+1}}{c_{i+1}-c_i}(x-c_i).$$
	And on $[c_{n-1},c_n]$ define $\mf f_0$ as $$x\mapsto \exp(c_{1}+\frac{c_{1}-c_{0}}{c_{n}-c_{n-1}}(x-c_{n-1})).$$
	
	Now on any interval $[\exp^k(0),\exp^k(1)]$, define $\mf f$ as $$\exp^k\comp \mf f_0\comp \log^k.$$ One checks that $\mf f$ is continuous. As both $\mf f_0,\exp$ and $\log$ are strictly increasing, $\mf f$ is also strictly increasing. One checks that indeed $\mf f^n(x)=\exp(n)$ for any $x$.
	
	The functions constructed above, are of intermediate type:
	\begin{lemma}\label{prop:fIsIntermediate}
		Let $\mf f$ be a monotone $n$-th compositional root of $\exp$ for $n>1$, then for any polynomial $p(x)$, there exists some $x_0$ such that $\forall x\geq x_0:\:\mf f(x)>p(x)$. And for every $\epsilon>0$, there exists some $x_0$ such that $\forall x\geq x_0:\:\mf f^{n-1}(x)<e^{x^\epsilon}$.
	\end{lemma}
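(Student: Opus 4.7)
The starting observation is that $\mf f$ commutes with $\exp=\mf f^n$: by associativity of composition, $\mf f\comp\exp=\mf f\comp\mf f^n=\mf f^{n+1}=\mf f^n\comp\mf f=\exp\comp\mf f$. I will use this repeatedly, together with the fact that $\mf f$ (and hence every iterate of it) is strictly increasing and tends to $+\infty$ (since $\mf f^n=\exp$ does).

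For the first claim, I would argue by contraposition. Suppose there were a polynomial $p$ with $\mf f(x)\leq p(x)$ for all sufficiently large $x$; after enlarging $p$ I may take $p$ monotone on $[x_0,\infty)$ and $p(x)\geq x$. Applying $\mf f$ to this inequality and using monotonicity gives $\mf f^2(x)\leq \mf f(p(x))\leq p(p(x))$ for $x$ large, and iterating $n$ times yields $\exp(x)=\mf f^n(x)\leq p^n(x)$, with $p^n$ still a polynomial. This contradicts the super-polynomial growth of $\exp$, so for every polynomial $p$ the inequality $\mf f(x)>p(x)$ must hold for all sufficiently large $x$.

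For the second claim, fix $\epsilon>0$. Since $\mf f$ is strictly increasing, the inequality $\mf f^{n-1}(x)<e^{x^\epsilon}$ is equivalent, after applying $\mf f$ to both sides and using $\mf f\comp\exp=\exp\comp\mf f$, to
\[
e^x=\mf f^n(x)<\mf f(e^{x^\epsilon})=\exp\bigl(\mf f(x^\epsilon)\bigr),
\]
i.e.\ to $\mf f(x^\epsilon)>x$. Writing this as $\mf f(y)>y^{1/\epsilon}$ with $y=x^\epsilon$, it follows from the first claim (applied to the polynomial $p(y)=y^{\lceil 1/\epsilon\rceil}+1$) that the inequality holds for all sufficiently large $y$, and hence for all sufficiently large $x$.

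The only conceptual step is the commutation $\mf f\comp\exp=\exp\comp\mf f$; once that is in hand, part (1) is a standard iteration argument and part (2) is a one-line reduction to (1). I do not foresee any substantive obstacle.
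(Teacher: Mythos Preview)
Your reduction in Part~2 is correct and is essentially the paper's argument: both use the commutation $\mf f\circ\exp=\exp\circ\mf f$ to rewrite $\mf f^{n-1}(x)<e^{x^\epsilon}$ as $\mf f(y)>y^{1/\epsilon}$ and then invoke Part~1.

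Part~1, however, has a genuine gap: your contraposition is not the correct one. The negation of ``there exists $x_0$ with $\mf f(x)>p(x)$ for all $x\geq x_0$'' is only that $\mf f(x)\leq p(x)$ for \emph{arbitrarily large} $x$, i.e.\ along some unbounded sequence $x_k$ --- not for all sufficiently large $x$ as you assume. With only a sequence your iteration breaks at the second step: from $\mf f(x_k)\leq p(x_k)$ monotonicity gives $\mf f^2(x_k)\leq \mf f(p(x_k))$, but you cannot conclude $\mf f(p(x_k))\leq p(p(x_k))$, since $p(x_k)$ need not belong to the sequence on which the bound is assumed. Thus your argument only shows that $\mf f(x)>p(x)$ infinitely often, which is strictly weaker than the claim. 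The paper proceeds instead by a direct argument: from $\mf f(x)>x$ and continuity, compactness on $[0,1]$ yields a uniform bound $\mf f(x)\geq x+\epsilon$, which the relation $\mf f=\exp\circ\mf f\circ\log$ then propagates to all $x\geq 0$; one further application of this relation gives $\mf f(x)\geq x^{(\log x)^{c}}$ for large $x$, dominating every polynomial. That uniform additive lower bound on $\mf f(x)-x$ is exactly what is needed to upgrade ``infinitely often'' to ``eventually'', and it is the ingredient your sketch is missing.
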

	\begin{proof}
		As $\exp(x)>x$ and as $\mf f$ is monotone, we have that $\mf f(x)>x$. It follows that on $[0,1]$, $\mf f(x)>x+\epsilon$ for some $\epsilon>0$. Suppose $\mf f(x)>x+\epsilon$ holds on some interval $[0,y]$ with $y\geq 0$, we show that it then also must hold on $[0,\exp(y)]$. This is indeed the case as for $x\in [y,\exp(y)]$, $\log(x)\in [0,y]$ and thus $$\mf f(x)=(\exp\comp \mf f\comp\log)(x)\geq\exp(\log(x)+\epsilon)=x\exp(\epsilon)>x(1+\epsilon)\geq x+\epsilon.$$ By induction, $\mf f(x)>x+\epsilon$ for all positive $x$. Now for $x>\exp(e)$ we find that $$\mf f(x)=(\exp^3\comp \mf f\comp \log^3)(x)\geq x^{\log(x)^\epsilon}$$ which grows faster than any arbitrary polynomial.
		
		For the second bound, let $m$ be such that $\frac{1}{m}<\epsilon$, we have for $x$ sufficiently large that $$
		\mf f(x)>x^m
		$$
		and thus by applying $\mf f$ $n-1$ times that$$
		\mf f^{n-1}(x)>\exp(x^m).
		$$
		The result follows for substituting $y=\sqrt[m]{x}$.
	\end{proof}

	\subsection{Constructing the group}
	As a notational shorthand, henceforth intervals are to be interpreted as subsets of integers\footnote{i.e. whenever $a<b$, $[a,b]=\{i\in\ZZ\mid a\leq i\leq b\}.$}
	
	To construct our exact group, we need some auxiliary sequences.
	First fix $\mf f$ a monotonously-increasing compositional $10$-th root of $\exp$, let $\tilde d_n$ denote the sequence such that $\tilde d_0=2\cdot 3\cdot 5$ and $\tilde d_n=e^{\tilde d_{n-1}}$. And denote $\tilde P_n$ the sequence such that $\tilde P_0=\mf f(d_0)$ and such that $\tilde P_n=\mf f^2(\tilde P_{n-1})$. In particular we have $\mf f(\tilde d_n)=\tilde P_{5n}$ and $\mf f(\tilde P_{5n-1})=d_n$.
		
		We approximate each of these sequences with integer sequences with desired numerical properties. 
		Let $P_n$ be prime valued such that $\tilde P_n\leq P_n<2 \tilde P_n$. We define $d_n$ inductively with $d_0=2\cdot 3\cdot 5$ such that for all $n\geq 1$, $\tilde d_n\leq d_n<\tilde d_n^{\frac{4}{3}}$, and such that $\frac{d_n}{d_{n-1}}$ is the product of the primes up to $m_n$ for some integer $m_n$.
		\begin{lemma}
			Such a sequence $d_n$ exists.
		\end{lemma}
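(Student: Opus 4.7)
The plan is to construct the sequence by induction, making a greedy choice at each step. The base case is trivial since $d_0 = \tilde d_0 = 30$ already satisfies $\tilde d_0 \leq d_0 < \tilde d_0^{4/3}$. For the inductive step, assume $d_{n-1}$ has been chosen with $\tilde d_{n-1} \leq d_{n-1} < \tilde d_{n-1}^{4/3}$, and let $m_n$ be the smallest positive integer such that $d_{n-1}\cdot\#m_n \geq \tilde d_n$, where $\#m := \prod_{p \leq m,\,p\text{ prime}} p$ denotes the primorial. Setting $d_n = d_{n-1}\cdot\#m_n$ makes the lower bound $\tilde d_n \leq d_n$ hold by construction. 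Since $\#m$ only increases at primes, the minimality of $m_n$ forces $m_n$ itself to be prime, and in particular $d_n / d_{n-1} = \#m_n$ has the required form.

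For the upper bound, minimality also gives $d_{n-1}\cdot\#(m_n - 1) < \tilde d_n$, so
\[
d_n \;=\; m_n\cdot d_{n-1}\cdot\#(m_n - 1) \;<\; m_n \cdot \tilde d_n.
\]
It therefore suffices to show $m_n < \tilde d_n^{1/3}$, since this yields $d_n < \tilde d_n^{4/3}$.

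The only real content is bounding $m_n$, and Chebyshev's estimate $\log(\#m) \geq c\cdot m$ (valid for all $m \geq 2$ with some absolute constant $c > 0$) handles this at once: from $\#(m_n - 1) < \tilde d_n / d_{n-1} \leq \tilde d_n$ one deduces $c(m_n - 1) \leq \log\#(m_n - 1) < \log \tilde d_n = \tilde d_{n-1}$, so $m_n = O(\tilde d_{n-1})$. Since $\tilde d_n^{1/3} = \exp(\tilde d_{n-1}/3)$ is doubly exponential in $\tilde d_{n-1}$, the inequality $m_n < \tilde d_n^{1/3}$ holds for all sufficiently large $n$, and the finitely many small cases can be verified by direct computation. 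I do not anticipate a real obstacle: the slack factor $\tilde d_n^{1/3}$ is vastly larger than the multiplicative step introduced by adjoining a single prime, and in fact the exponent $4/3$ could be replaced by any constant strictly greater than $1$ with essentially the same argument.
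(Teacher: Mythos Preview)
Your proof is correct and follows essentially the same greedy inductive construction as the paper: both start from $d_{n-1}$, multiply by successive primes until the threshold $\tilde d_n$ is crossed, and then bound the last prime adjoined to get the upper bound. The only difference is cosmetic---you invoke Chebyshev's estimate $\theta(m)\geq cm$ to bound $m_n$, while the paper uses the weaker Bertrand's postulate to locate enough primes below $e^{\tilde d_n/3}$; your hedge about ``finitely many small cases'' is in fact unnecessary, since $\tilde d_{n-1}\geq 30$ already makes $m_n=O(\tilde d_{n-1})\ll e^{\tilde d_{n-1}/3}$ hold from $n=1$ onward.
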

		\begin{proof}
			First notice that $\tilde d_n>25$ for all $n\geq 0$. We show the product of all primes $p\leq e^{\frac{d_n}{3}}$ is at least $e^{d_n}$. Note that $e^{\frac{d_n}{6}}\geq 64$. By Bertrands postulate we find at least $6$ distinct primes in the interval $[e^{\frac{d_n}{6}},e^{\frac{d_n}{3}}]$, the product of which is thus at least $e^{d_n}$. We find $d_{n+1}$ by starting with $d=d_n$ and successively multiplying it with larger and larger primes $p$. If $p<e^{\frac{d_n}{3}}$ and $d<e^{d_n}$, then $pd<e^{\frac{4d_n}{3}}$. By the above computation, we eventually have for some $p\leq e^{\frac{d_n}{3}}$ that $d<e^{d_n}\leq pd<e^{\frac{4d_n}{3}}$. We continue inductively.
		\end{proof}
		
		\begin{lemma}
			For $n\neq m$ sufficiently large, $P_n\neq P_m$
		\end{lemma}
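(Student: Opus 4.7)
The plan is to observe that $\mf f^2$ is itself a compositional $5$-th root of $\exp$ (since $\mf f^{10}=\exp$), and then to invoke \Cref{prop:fIsIntermediate} applied to $\mf f^2$ with $n=5>1$. That lemma gives that $\mf f^2(x)$ eventually dominates every polynomial, so in particular $\mf f^2(x) > 2x$ for all $x$ sufficiently large.

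Since $\tilde P_{n+1}=\mf f^2(\tilde P_n)$ and the sequence $\tilde P_n$ is strictly increasing (as $\mf f^2(x)>x$), once we pass an index $n_0$ large enough that $\tilde P_{n_0}$ lies in the region where $\mf f^2(x)>2x$, we obtain $\tilde P_{n+1} > 2\tilde P_n$ for every $n\geq n_0$. Combining this with the defining bracket $\tilde P_n \leq P_n < 2\tilde P_n$ yields
\[
P_n < 2\tilde P_n < \tilde P_{n+1}\leq P_{n+1}
\]
for every $n\geq n_0$. Hence the sequence $(P_n)_{n\geq n_0}$ is strictly increasing, and in particular its terms are pairwise distinct.

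The argument is essentially a one-liner once \Cref{prop:fIsIntermediate} is available; there is no real obstacle. The only point that requires a moment's thought is recognising that $\mf f^2$ qualifies as an $n$-th compositional root of $\exp$ with $n>1$, so that the hypothesis of the cited lemma is satisfied and we may use the super-polynomial lower bound. The factor $2$ in the definition of $P_n$ is harmless because $\mf f^2$ outgrows any linear function, so no delicate estimate on the prime in $[\tilde P_n,2\tilde P_n)$ is needed.
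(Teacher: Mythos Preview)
Your proof is correct and follows essentially the same line as the paper's. The paper invokes \Cref{prop:fIsIntermediate} for $\mf f$ itself (rather than $\mf f^2$) to obtain $2\tilde P_i<\tilde P_{i+1}$ eventually, and then remarks that the result follows by Bertrand's postulate; your chain $P_n<2\tilde P_n<\tilde P_{n+1}\leq P_{n+1}$ makes the final step explicit and shows that Bertrand is not actually needed for distinctness (only for the existence of the $P_n$ in the first place).
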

		\begin{proof}
			As by \Cref{prop:fIsIntermediate}, $\mf f$ outgrows every polynomial, we eventually have that $2\tilde P_i<\tilde P_{i+1}$. The result follows by Bertrands postulate.
		\end{proof}
		We define now $$
		q_n=\prod_{k=0}^8 P_{5n-5+2k}
		$$
		where we take $P_i=3$ whenever $i<0$.
		
		For sufficiently large $n$, $q_n$ is thus composed of those primes $P_j$ appearing between $d_{n-1}$ and $\mf f^4(d_{n+2})$ where $j$ has different parity then $n$.

		Define $G_{\textrm{Int}}$ as the group with presentation
			$$
			G_{\textrm{Int}}=\lelangle \{t,a_i,c_i\vert i\in\ZZ\}\Bigg\vert \begin{matrix} ta_it\inv=a_{i+1}&i\in\ZZ\\
				tc_it\inv=c_i&i\in\ZZ\\
				[a_i,a_j]=c_{i-j}&i,j\in\ZZ\\ [c_i,a_j]=[c_i,c_j]=1&i,j\in\ZZ\\ c_{d_i}^{q_i}=1&i\in\ZZ_{\geq 0}
			\end{matrix}\rerangle.
			$$
		From the presentation it is clear that $G_{\textrm{Int}}\in\cat H$
		Notice that the centre of $G_{\textrm{Int}}$ is in a way larger than the centre of $G_d$ from the previous section as this centre contains a subgroup isomorphic to $\bigoplus_\NN\ZZ$.
		\begin{proposition}
			$\mathrm{Rf}_{G_{\textrm{Int}}}(n)\succ {\mf f}^2(n)$
		\end{proposition}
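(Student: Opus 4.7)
The plan is to take $g=c_{d_i}$ (for an appropriate $i$ depending on $n$), whose word norm is $O(d_i)$ by \Cref{prop:supportC} and the identity $c_{d_i}=[a_{d_i},a_0]$, and to show that any finite quotient $\varphi\colon G_{\mathrm{Int}}\to Q$ with $\varphi(c_{d_i})\neq 1$ must satisfy $|Q|\geq C^{-1}\mathfrak{f}^2(d_i)$ for some absolute constant $C$. Coupled with $d_i\leq\tilde d_i^{4/3}$ and the monotonicity of $\mathfrak{f}^2$, this yields the desired $\mathrm{Rf}_{G_{\mathrm{Int}}}(n)\succ \mathfrak{f}^2(n)$.

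The first step is to extract the combinatorial constraints on the order $I$ of $\varphi(t)$. Because $\varphi(a_{l+I})=\varphi(a_l)$ we have $\varphi(c_k)=\varphi(c_{k\bmod I})$, and the anti-symmetry $c_{-k}=c_k^{-1}$ forces $\varphi(c_{I/2})^2=1$ whenever $I$ is even. Hence, writing $r=d_i\bmod I$, the element $\varphi(c_{d_i})=\varphi(c_r)$ has order dividing
\[\gcd\bigl\{q_j : j\geq 0,\ d_j\equiv\pm d_i\pmod I\bigr\},\]
and being non-trivial requires in addition $r\notin\{0,I/2\}$. The explicit form $q_n=\prod_{k=0}^{8}P_{5n-5+2k}$ ensures that $q_j$ and $q_{j'}$ share a prime only when $|j-j'|\leq 2$ and $j\equiv j'\pmod 2$, so the set $J=\{j:d_j\equiv \pm d_i\pmod I\}$ must lie inside $\{i-2,i,i+2\}$.

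Next I would turn these combinatorial conditions into a lower bound on $I$ using the multiplicative structure of $d_n=d_{n-1}\prod_{p\leq m_n}p$. From $\nu_2(d_n)=n+1$ (or more generally the formula $\nu_p(d_n)=n-k^*(p)+1$) one sees that if every prime divisor of $I$ lies below $m_{i+2}\approx\tilde d_{i+1}$, then $I$ eventually divides all sufficiently large $d_j$, forcing $J$ to be infinite; so $I$ must either contain a prime factor $>m_{i+2}$ (in which case $I\gtrsim\tilde d_{i+1}\geq e^{\tilde d_i}\gg\mathfrak{f}^2(d_i)$ and we are done), or carry a high 2-adic multiplicity making $I\geq 2^{i+3}$ through the combination of $r\notin\{0,I/2\}$ and $J\subseteq\{i-2,i,i+2\}$.

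The main obstacle, and the technical heart of the proof, is closing the gap in the residual case where $I$ is near its minimum and the prime $p\mid\mathrm{ord}(\varphi(c_{d_i}))$ is one of the small primes of $q_i$ (e.g.\ $P_{5i-5}\approx\mathfrak{f}(\tilde d_{i-1})$, which is far below $\mathfrak{f}^2(d_i)\approx\mathfrak{f}^3(\tilde d_i)$). The plan here is to use the 2-step nilpotent structure of $Q^0=\varphi(N)$: the commutator pairing on $A=Q^0/[Q^0,Q^0]$ is alternating and $\varphi(t)$-equivariant, so $\varphi(c_{d_i})\neq 1$ forces $A$ to be non-cyclic as an abelian group and to admit a $\varphi(t)$-orbit of length at least $2^{i+3}$ on $\varphi(a_0)$. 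Realising such an orbit inside a faithful $\mathbb{F}_p$-representation of $\mathbb{Z}/I$ forces $|A|\geq p^{d(p,I)}$ where $d(p,I)$ is the order of $p$ in $(\mathbb{Z}/I)^\times$, and combining this with $|Q|=I\cdot|A|\cdot|\varphi(C)|$ and the specific scales $P_{5i+2k+1}\approx\mathfrak{f}^{2k+3}(\tilde d_i)$ prescribed by the construction of $\mathfrak{f}$ and $q_i$ is what pushes $|Q|$ above $\mathfrak{f}^2(d_i)$. Carrying out this case analysis cleanly—deciding in each combinatorial case for $J$ which of the nine primes of $q_i$ may occur and how the orbit-size constraint interacts with the dimension of $A$—is the step I expect to require the most care.
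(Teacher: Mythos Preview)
Your approach is very different from the paper's and leaves the hardest step unfinished, whereas the paper's argument is a two-line order computation.

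The paper does not take $g=c_{d_i}$. Instead, for a given $n$ it picks $i$ maximal with $P_i\le n$, then a suitable $j$ with $d_j\le n$ and $P_i\mid q_j$, and sets
\[
x \;=\; c_{d_j}^{\,I},\qquad I \;=\; P_{5j-5}P_{5j-3}\cdots P_i.
\]
A short estimate (using that $P_kP_{k+1}\cdots P_{l-1}\le P_l$ for large $k$, since $\mathfrak f^2(X/2)$ eventually dominates $X^2$) gives $I<n^2$, so $x$ still has word norm $\le 10n+2$. The whole point of raising to the $I$-th power is that the order of $x$ in $G_{\mathrm{Int}}$ becomes $P_{i+2}P_{i+4}\cdots P_{5j+11}$: the small prime factors of $q_j$ have been stripped away. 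In any finite quotient with $\varphi(x)\ne 1$, the order of $\varphi(x)$ is a nontrivial divisor of this number and hence at least its smallest prime factor $P_{i+2}\ge \mathfrak f^2(\tilde P_{i+1})\ge \mathfrak f^2(n/2)$. That is the entire proof: $|Q|\ge\mathrm{ord}(\varphi(x))\ge P_{i+2}$, with no analysis of $\varphi(t)$, no congruences among the $d_j$, and no representation theory of the abelianisation.

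Your plan instead takes the raw element $c_{d_i}$ and tries to bound $|Q|$ through the order of $\varphi(t)$ and the $\mathbb F_p$-dimension of $A$. The obstacle you yourself flag is real and not resolved: when only the smallest prime $P_{5i-5}\approx\mathfrak f(\tilde d_{i-1})$ survives in $\mathrm{ord}(\varphi(c_{d_i}))$, you are roughly eleven iterates of $\mathfrak f$ short of the target $\mathfrak f^2(d_i)\approx\mathfrak f^{12}(\tilde d_{i-1})$, and the sketched bound $|A|\ge p^{d(p,I)}$ is not visibly enough to close that gap (nor is it argued). This case analysis might be completable, but it is precisely what the power trick renders unnecessary.
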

		\begin{proof}
			First notice that for $i\geq I_0$, some uniform constant, $P_i P_{i+1} P_{i+2}\cdots P_{j-1}\leq P_{j}$. Indeed on the one hand $P_i\leq P_{i+1}$, by induction it thus suffices to show that $P_i^2<P_{i+1}$ this is the case as by \Cref{prop:fIsIntermediate}, ${\mf f}^2(\frac{X}{2})$ eventually outgrows $X^2$.
			
			Let $n$ be a sufficiently large natural number, let $i$ be maximal such that $P_i\leq n$ and let $d_j\leq n$ be maximal such that $P_i\mid q_j$. Notice that $j$ is thus the either the maximal integer such that $5j\leq i$ or one less, whichever has the opposite parity of $i$.
			
			Let $I=P_{5j-5}P_{5j-3}\cdots P_i$, by the above $I<P_i^2\leq n^2$. Notice that $x=c_{d_j}^{I}\in{G_{\textrm{Int}}}$ is of word norm no more then $10n+2$ as it can be written as $$
			[a_0^{m_0},a_j^n][a_0^{m_1},a_j]
			$$
			for some $\abs{m_0},\abs{m_1}\leq n$. The order of $c_j$ is $\prod_{k=0}^8 P_{5j-5+2k}$ thus the order of $x$ is $$
			P_{i+2}P_{i+4}\cdots P_{5j+11}.
			$$
			Let $\varphi:{G_{\textrm{Int}}}\rightarrow Q$ be a finite quotient in which $\varphi(x)$ is non-trivial. The order of $\varphi(x)$ must be a divisor of the order of $x$, so it must be at least $P_{i+2}$, furthermore $P_{i+2}\geq {\mf f}^2(\tilde P_{i+1})\geq {\mf f}^2(\frac{n}{2})$ from which the result follows.
		\end{proof}
		\begin{lemma}\label{prop:centerreduct}
			Fix $\epsilon>0$ some constant.
			Let $n$ be an integer.
			Let ${0\leq l_1<l_2<\cdots<l_k\leq n}$ and $m_0$ be integers such that $l_i\neq l_j\mod m_0$.
			Let $f\in \ZZ[X]$ be of degree $\leq n$ such that ${f\notin M=\ZZ X^{l_1}+\ZZ X^{l_2}+\cdots+ \ZZ X^{l_k}}$.
			Then there exists some odd prime $q<cn^{\frac{1}{2}+\epsilon}$ for some uniform constant $c$ such that for the ideal $I_q=\langle X^{\mathrm{lcm}(q,m_0)}-1\rangle$, $f\notin M+I_q$.
		\end{lemma}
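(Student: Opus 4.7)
The plan is to reduce $f \notin M + I_q$ to a cyclotomic divisibility statement for a single integer polynomial of degree at most $n/m_0$, and then combine root-counting with the prime number theorem.

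First I would subtract $\sum_i a_{l_i} X^{l_i} \in M$ from $f$ to assume, without loss of generality, that the support of $f$ is disjoint from $\{l_1, \ldots, l_k\}$ while $f \neq 0$. I would dispense with the trivial case $m_0 > n$ directly: whenever $\mathrm{lcm}(q, m_0) > n$, reducing $f$ modulo $X^{\mathrm{lcm}(q, m_0)} - 1$ does not combine any of its coefficients, and the $l_i$ remain distinct residues modulo $\mathrm{lcm}(q, m_0)$, so $f \notin M + I_q$ is immediate for any odd prime $q \leq c n^{1/2 + \epsilon}$ coprime to $m_0$ (which clearly exists for $n$ large). From here on I would restrict to $m_0 \leq n$ and to odd primes $q$ coprime to $m_0$, so that $L_q := \mathrm{lcm}(q, m_0) = m_0 q$.

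Next I would partition $\mathrm{supp}(f)$ by residue class modulo $m_0$, pick a nonempty piece $J_\alpha$, and form the auxiliary polynomial
$$g(X) \;=\; \sum_{r \in J_\alpha} a_r\, X^{(r - \alpha)/m_0} \;\in\; \ZZ[X], \qquad \deg g \leq n/m_0.$$
A Chinese Remainder Theorem computation then shows that $f \in M + I_q$ forces one of the following two alternatives on $g$: if $\alpha \not\equiv l_i \pmod{m_0}$ for every $i$, then $g \equiv 0 \pmod{X^q - 1}$; and if $\alpha \equiv l_j \pmod{m_0}$ for the unique such $j$, then $g \bmod (X^q - 1) \in \ZZ \cdot X^{s^* \bmod q}$ where $s^* = (l_j - \alpha)/m_0$.

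The crucial step is to bound how many odd primes $q$ can satisfy either alternative. In the first case, $X^q - 1 \mid g$ in $\QQ[X]$ and each such $q$ consumes $q$ distinct roots of $g$, so $\sum_q q \leq \deg g \leq n/m_0$, giving at most $C\sqrt{n/m_0} \leq C\sqrt{n}$ bad primes. In the second case I would introduce $h(X) := g(X) - g(1)\, X^{s^*}$: evaluating "$g \equiv c X^{s^*} \pmod{X^q - 1}$" at $X = 1$ forces $c = g(1)$, and then the condition becomes $X^q - 1 \mid h$. This $h$ is nonzero since $g$ has no $X^{s^*}$-coefficient (because $l_j \notin \mathrm{supp}(f)$ by the initial reduction), and $\deg h \leq n/m_0$, so again at most $C\sqrt n$ primes are bad. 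Together with the at most $\log_2 m_0 \leq \log_2 n$ primes dividing $m_0$ and the prime $2$, the total number of forbidden primes is at most $C\sqrt n + \log_2 n + 1$, while the prime number theorem gives $\pi(c n^{1/2+\epsilon}) \geq c n^{1/2+\epsilon}/(2 \log n)$ for $n$ large; this exceeds the forbidden count once $c$ is chosen sufficiently large depending on $\epsilon$, so a good $q$ in the claimed range exists. The principal obstacle is the careful translation of $f \in M + I_q$ into the cyclotomic divisibility conditions, especially the identification of $h$ in the shifted second case; once that algebraic reduction is in place, the prime-counting conclusion is routine.
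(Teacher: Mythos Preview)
Your argument is correct, and it takes a genuinely different route from the paper's. The paper argues by contradiction: it fixes the whole set $\mathcal P$ of odd primes below $cn^{1/2+\epsilon}$, assumes every $q\in\mathcal P$ is bad, writes $f\equiv\sum_i c_{i,q}X^{l_i}\pmod{I_q}$, and then uses reduction modulo $I_{q_0}=\langle X^{m_0}-1\rangle$ (for any prime $q_0\mid m_0$) together with the hypothesis that the $l_i$ are distinct modulo $m_0$ to show that the coefficients $c_{i,q}$ are in fact independent of $q$; a single CRT step then places $f-\sum c_i X^{l_i}$ in $\bigcap_{q\in\mathcal P}I_q\subset\langle\prod_{q\in\mathcal P}\phi_q\rangle$, which has degree $>n$, forcing $f\in M$. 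Your approach instead isolates one residue class modulo $m_0$, builds the auxiliary polynomial $g$ of degree at most $n/m_0$, and for each bad $q$ deduces $X^q-1\mid g$ (or $X^q-1\mid h$ in the shifted case), so that the bad primes are directly counted by a degree bound. Two remarks: your ``$\sum_q q\le\deg g$'' should strictly be $\sum_q(q-1)\le\deg g$ because the bad $X^q-1$ all share the root $1$, but this is harmless for the $O(\sqrt n)$ conclusion; and your evaluation-at-$1$ trick to pin down $c=g(1)$ plays exactly the role that the paper's reduction modulo $I_{q_0}$ plays, namely forcing the $M$-part to be the same for every bad $q$. The paper's version is a bit slicker (no case split, no auxiliary $h$, and it never needs to separate out $m_0>n$ or primes dividing $m_0$), while yours yields the extra information that only $O(\sqrt{n/m_0})$ primes can be bad, which is a sharper quantitative statement than the lemma actually requires.
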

		\begin{proof}
			By the prime density theorem, we have that $\sum_{\substack{q<n\\q\textrm{ prime}}}q\succ \frac{n^2}{\log(n)^2}$. We might thus find some constant $c$ be such that $$\sum_{\substack{q<cn^{\frac{1}{2}+\epsilon}\\q\textrm{ prime}\\q>2}}q-1>n.$$ 
			Denote $\mc P=\{q\textrm{ prime}\mid q>2, q<cn^{\frac{1}{2}+\epsilon}\}$.
			
			Suppose that for every $q\in\mc P$, $f\in M+I_q$. Then there exist constants $c_{i,q}$ such that $$f=c_{1,q}X^{l_1}+c_{2,q}X^{l_2}+\cdots+c_{k,q}X^{l_k}\mod I_q.$$
			Let $q_0$ be a prime divisor of $m_0$, notice that $I_{q_0}\supset I_{q}$ for every $q$, in particular we have for any $q$ that
			$$f=c_{1,q}X^{l_1}+c_{2,q}X^{l_2}+\cdots+c_{k,q}X^{l_k}\mod I_{q_0}.$$
			
			Notice that the natural projection $\ZZ[X]\rightarrow \frac{\ZZ[X]}{I_{q_0}}$ is injective on $M$. It follows that if \begin{align*}
				&c_{1,q}X^{l_1}+c_{2,q}X^{l_2}+\cdots+c_{k,q}X^{l_k}\\
				&=c_{1,q'}X^{l_1}+c_{2,q'}X^{l_2}+\cdots+c_{k,q'}X^{l_k}\mod I_{p_0},
			\end{align*}
			then $c_{i,q}=c_{i,q'}$ for any choice of $i$, we may thus denote $c_i=c_{i,q}$.
			We find by the Chinese remainder theorem that $$
			f=c_{1}X^{l_1}+c_{2}X^{l_2}+\cdots+c_{k}X^{l_k}\mod\bigcap_{q\in\mc P}I_{q}.
			$$
			This ideal $$\bigcap_{q\in\mc P}I_{q}$$ is contained in the principle ideal generated by the product of cyclotomic polynomials $$\prod_{q\in\mc P} \phi_q(X)$$ of degree $$\sum_{q\in\mc P}q-1.$$
			As both $f$ and $c_{1}X^{l_1}+c_{2}X^{l_2}+\cdots+c_{k}X^{l_k}$ are of degree at most $$n<\sum_{\substack{2<q<Cn^{\frac{1}{2}+\epsilon}\\q\textrm{ prime}}}q-1,$$ we have that they must thus be equal, contradicting that $f\notin M$.
		\end{proof}
		We can translate \Cref{prop:centerreduct} to a version of Laurent polynomials:
		
		\begin{lemma}\label{prop:laurentCenterReduct}
			Fix $\epsilon>0$ some constant.
			Let $n>0$ be an integer.
			Let ${-n\leq l_1<l_2<\cdots<l_k\leq n}$ and $m_0$ be integers such that $l_i\neq l_j\mod m_0$.
			Let $f\in \ZZ[X,X\inv]$ be a Laurent-polynomial where each term has degree in $[-n,n]$ such that ${f\notin M=\ZZ X^{l_1}+\ZZ X^{l_2}+\cdots+ \ZZ X^{l_k}}$.
			Then there exists some odd prime $q<cn^{\frac{1}{2}+\epsilon}$ for some uniform constant $c$ such that for the ideal $I_q=\langle X^{\mathrm{lcm}(q,m_0)}-1\rangle$, $f\notin M+I_q$.
		\end{lemma}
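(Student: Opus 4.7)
The plan is to reduce directly to the polynomial version \Cref{prop:centerreduct} by multiplying everything by $X^n$ so as to shift the exponents into the non-negative range.

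Set $\tilde f = X^n f \in \ZZ[X]$, which is an honest polynomial of degree at most $2n$, and set $\tilde l_i = l_i + n$, so that $0 \leq \tilde l_1 < \tilde l_2 < \cdots < \tilde l_k \leq 2n$ and $\tilde M = \ZZ X^{\tilde l_1} + \cdots + \ZZ X^{\tilde l_k}$. Translations preserve residues, so the $\tilde l_i$ remain pairwise distinct modulo $m_0$. Since multiplication by the unit $X^n$ on $\ZZ[X, X\inv]$ restricts to a bijection $M \isomap \tilde M$, the assumption $f \notin M$ gives $\tilde f \notin \tilde M$.

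Now apply \Cref{prop:centerreduct} to $\tilde f$ with degree bound $2n$: there is an odd prime $q < c'(2n)^{\frac{1}{2}+\epsilon}$ (absorbing $2^{\frac{1}{2}+\epsilon}$ into a new constant $c$) such that $\tilde f \notin \tilde M + I_q$, where $I_q = \langle X^{\lcm(q,m_0)} - 1\rangle$ is now understood as an ideal of $\ZZ[X]$. The remaining task is to lift this non-containment back to the Laurent setting, that is, to show $f \notin M + I_q \cdot \ZZ[X, X\inv]$.

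For this, suppose for contradiction that $f = g + (X^N - 1)p$ with $g \in M$ and $p \in \ZZ[X, X\inv]$, where $N = \lcm(q, m_0)$. Multiplying by $X^n$ yields $\tilde f = X^n g + (X^N - 1) X^n p$, where $X^n g = \sum c_i X^{\tilde l_i} \in \tilde M$. Since $X$ is a unit in $\ZZ[X]/(X^N-1)$, the natural map $\ZZ[X]/(X^N-1) \to \ZZ[X, X\inv]/(X^N-1)$ is an isomorphism, so $X^n p$ agrees with some $p' \in \ZZ[X]$ modulo $X^N - 1$. Then $\tilde f \equiv X^n g \mod (X^N - 1)$ in $\ZZ[X]$, placing $\tilde f \in \tilde M + I_q \subset \ZZ[X]$, which contradicts the conclusion from \Cref{prop:centerreduct}.

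The only subtlety is this bookkeeping around the isomorphism between $\ZZ[X]/(X^N-1)$ and $\ZZ[X,X\inv]/(X^N-1)$; once that is spelled out, the proof is a short reduction and the bound $cn^{\frac{1}{2}+\epsilon}$ survives unchanged up to the constant $c$.
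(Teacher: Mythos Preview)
Your proof is correct and follows essentially the same route as the paper: shift by $X^n$ to land in $\ZZ[X]$, apply \Cref{prop:centerreduct} with degree bound $2n$, and then transfer the non-containment back using that $X$ is a unit modulo $X^N-1$. The paper phrases the last step as $\tilde I_q = I_q \cap \ZZ[X]$ (since $X \nmid X^N-1$), which is exactly the injectivity of $\ZZ[X]/(X^N-1)\to\ZZ[X,X^{-1}]/(X^N-1)$ that you invoke.
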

		\begin{proof}
			Let $\tilde f=X^nf\in\ZZ[X]$, let $\tilde l_i=n+l_i\in[0,2n]$ and define $\tilde M=X^nM\subset\ZZ[X]$. By \Cref{prop:centerreduct} we find $q<c(2n)^{\frac{1}{2}+\epsilon}$ such that for the ideal $\langle X^{\mathrm{lcm}(q,m_0)}-1\rangle\tilde I_q<\ZZ[x]$, we have $\tilde f\notin \tilde M+\tilde I_q$. As in $\ZZ[X]$, $x\nmid X^{\mathrm{lcm}(q,m_0)}-1 $, we have that $\tilde I_q=I_q\cap\ZZ[X]$. We thus have that $X^nf\notin X^nM+I_q$ of thus as $I_q$ is an ideal and $X$ a unit that $f\notin M+I_q$.
		\end{proof}

		\begin{lemma}\label{prop:dManyFactors}
			Let $c$ be some fixed constant, then for $n$ sufficiently large, any prime $p\leq c\sqrt{d_n}$ divides $d_{n+1}$.
		\end{lemma}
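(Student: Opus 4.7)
The plan is to read off a lower bound on $m_{n+1}$, the parameter defined by the construction so that $d_{n+1}/d_{n}=\prod_{p\leq m_{n+1}}p$, and compare it with $c\sqrt{d_n}$. Once we show $m_{n+1}\geq c\sqrt{d_n}$ for $n$ sufficiently large, any prime $p\leq c\sqrt{d_n}$ automatically appears in the primorial product defining $d_{n+1}/d_n$, and so divides $d_{n+1}$.

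First I would recall the two facts from the construction: $d_n\leq \tilde d_n^{4/3}$ and $d_{n+1}\geq \tilde d_{n+1}=e^{\tilde d_n}$, together with $d_{n+1}=d_n\cdot\prod_{p\leq m_{n+1}}p$. Taking logarithms gives
\[
\log d_n+\theta(m_{n+1})\geq \tilde d_n,
\]
where $\theta(m)=\sum_{p\leq m}\log p$ is the Chebyshev function. Using the elementary Chebyshev bound $\theta(m)\leq 2m$ for all $m$ (no prime-number-theorem strength is needed), I get
\[
m_{n+1}\geq \tfrac{1}{2}\bigl(\tilde d_n-\log d_n\bigr)\geq \tfrac{1}{2}\bigl(\tilde d_n-\tfrac{4}{3}\log \tilde d_n\bigr),
\]
which is asymptotic to $\tilde d_n/2$.

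Next I would compare this with $c\sqrt{d_n}$. Since $d_n\leq \tilde d_n^{4/3}$, we have $c\sqrt{d_n}\leq c\,\tilde d_n^{2/3}$, and because $\tilde d_n\to\infty$ super-exponentially, $c\,\tilde d_n^{2/3}=o(\tilde d_n/2)$. Consequently there exists $N$ (depending only on $c$) such that for all $n\geq N$,
\[
c\sqrt{d_n}\leq c\,\tilde d_n^{2/3}\leq \tfrac{1}{2}\bigl(\tilde d_n-\tfrac{4}{3}\log \tilde d_n\bigr)\leq m_{n+1}.
\]
Thus any prime $p\leq c\sqrt{d_n}$ satisfies $p\leq m_{n+1}$, and hence $p\mid d_{n+1}/d_n\mid d_{n+1}$, as required.

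The argument is essentially a bookkeeping exercise once the correct bounds on $d_n$ and $\tilde d_n$ are isolated; the only place where one could slip is forgetting the factor $\tilde d_n^{4/3}$ in the upper bound on $d_n$, but even this weak bound comfortably beats $\sqrt{d_n}$ against the dominant scale $\tilde d_n$ of $m_{n+1}$, so no delicate analytic input is required.
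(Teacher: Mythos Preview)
Your proof is correct and follows essentially the same route as the paper: both arguments reduce to showing that the primorial threshold $m_{n+1}$ exceeds $c\sqrt{d_n}$ by comparing the scale $c\sqrt{d_n}\leq c\,\tilde d_n^{2/3}$ against $\tilde d_n$. The only cosmetic difference is that the paper bounds $d_n\prod_{p<c\sqrt{d_n}}p\leq \tilde d_n^{4/3}(c\tilde d_n^{2/3})^{c\tilde d_n^{2/3}}<e^{\tilde d_n}\leq d_{n+1}$ directly (using the crude estimate that a product of at most $X$ primes each at most $X$ is at most $X^X$), whereas you pass through the Chebyshev bound $\theta(m)\leq 2m$ to extract $m_{n+1}\gtrsim \tilde d_n/2$ explicitly; the underlying comparison is identical.
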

	 	\begin{proof}
	 		By construction of $d_i$, we have to demonstrate that $$
	 		d_n\prod_{p<c\sqrt{d_n}}p\leq d_{n+1}.
	 		$$
	 		This is the case as the former can be bounded above by 
	 		$$
	 		\tilde d_n^{\frac{4}{3}} (c\tilde d_n^{\frac{2}{3}})^{c\tilde d_n^{\frac{2}{3}}}
	 		$$
	 		which is less than $e^{\tilde d_n}$ for $n$ sufficiently large.
	 	\end{proof}
		\begin{lemma}\label{prop:bound2power}
			Let $n,k>2$, if $2^{k}\mid d_{n}$, then $2^k<2\log(\log(d_n))$. 
		\end{lemma}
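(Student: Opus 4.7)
The plan is to trace the $2$-adic valuation of $d_n$ explicitly. By construction $d_0=2\cdot 3\cdot 5$, so $v_2(d_0)=1$. Each successive ratio $d_n/d_{n-1}$ is a primorial $\prod_{p\leq m_n}p$, and the only even prime in this product contributes exponent exactly one (note that $m_n\geq 2$ since the required lower bound $\tilde d_n\leq d_n$ forces $m_n\to\infty$). Consequently $v_2(d_n)=v_2(d_{n-1})+1$, giving $v_2(d_n)=n+1$ by induction. Hence the hypothesis $2^k\mid d_n$ already forces $k\leq n+1$.

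Next I would bound $\log\log(d_n)$ from below using the telescoping recursion $\tilde d_n=\exp(\tilde d_{n-1})$. Since $d_n\geq \tilde d_n$ and $n\geq 3$, both logarithms are well defined and
\[
\log\log(d_n)\;\geq\;\log\log(\tilde d_n)\;=\;\log(\tilde d_{n-1})\;=\;\tilde d_{n-2}.
\]

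It therefore suffices to verify $2^{n+1}<2\,\tilde d_{n-2}$, i.e.\ $2^n<\tilde d_{n-2}$, for all $n>2$. The base case $n=3$ is immediate: $\tilde d_1=e^{30}>8=2^3$. The inductive step is trivial because $\tilde d_{j+1}=e^{\tilde d_j}$ dwarfs $2^{j+1}=2\cdot 2^j$ as soon as $\tilde d_j\geq 2$. Chaining the three estimates gives $2^k\leq 2^{n+1}<2\tilde d_{n-2}\leq 2\log\log(d_n)$, as required.

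There is no real obstacle here: the argument is pure bookkeeping once one notices the crucial fact that each primorial factor $d_n/d_{n-1}$ contributes exactly one power of $2$, which is what keeps the $2$-part of $d_n$ growing only linearly in $n$ while $\log\log(d_n)$ grows tower-exponentially.
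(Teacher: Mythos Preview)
Your argument is correct and rests on the same observation as the paper: each ratio $d_n/d_{n-1}$ is a primorial and hence contributes exactly one factor of $2$, so the $2$-adic valuation of $d_n$ grows linearly in $n$ while $\log\log(d_n)\geq\tilde d_{n-2}$ grows tower-exponentially.

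Your presentation is in fact tighter than the paper's. The paper argues only that $2^k\mid d_n\Rightarrow 2^{k-2}\mid d_{n-2}$ and then writes the chain $2^k\leq 3\cdot 5\cdot 2^{k-2}\leq d_{n-2}\leq 2\log\log(\tilde d_n)$; but the last inequality is not justified, since $d_{n-2}$ may be as large as $\tilde d_{n-2}^{4/3}$ while $2\log\log(\tilde d_n)=2\tilde d_{n-2}$. By computing $v_2(d_n)=n+1$ exactly and comparing $2^{n+1}$ directly with $\tilde d_{n-2}$, you sidestep this issue entirely.
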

		\begin{proof}
			Notice that by construction, if $2^k\mid d_{n}$, then $2^{k-2}\mid d_{n-2}$. As ${3\cdot 5\mid d_0}$, we have ${3\cdot 5\cdot 2^{k-2}\mid d_{n-2}}$. And thus $2^k\leq 3\cdot 5\cdot 2^{k-2}\leq d_{n-2}\leq 2\log (\log(\tilde d_n))$.
		\end{proof}
		To provide an upper bound to to the residual finiteness growth, we use quotients of the following form
		\begin{definition}
			Let $q>1$ and $p$ an odd prime, define then $G_{p,q}$ the group with presentation
			$$
			G_{p,q}=\lelangle \{t,a_i,c_i\vert i\in\frac{\ZZ}{2q\ZZ}\}\Bigg\vert \begin{matrix} ta_it\inv=a_{i+1}&i\in\frac{\ZZ}{2q\ZZ}\\
				tc_it\inv=c_i&i\in\frac{\ZZ}{2q\ZZ}\\
				[a_i,a_j]=c_{i-j}&i,j\in\frac{\ZZ}{2q\ZZ}\\ [c_i,a_j]=[c_i,c_j]=1&i,j\in\frac{\ZZ}{2q\ZZ}\\ c_{d_i}^{q_i}=1&i\in\ZZ_{\geq 0}\\
				t^{2q}=a_i^p=c_i^p=1&i\in\frac{\ZZ}{2q\ZZ}
			\end{matrix}\rerangle
			$$
			and $N_{p,q}$ with presentation
			$$
			N_{p,q}=\lelangle \{a_i,c_i\vert i\in\frac{\ZZ}{2q\ZZ}\}\Bigg\vert \begin{matrix} 
				[a_i,a_j]=c_{i-j}&i,j\in\frac{\ZZ}{2q\ZZ}\\ [c_i,a_j]=[c_i,c_j]=1&i,j\in\frac{\ZZ}{2q\ZZ}\\ c_{d_i}^{q_i}=1&i\in\ZZ_{\geq 0}\\
				a_i^p=c_i^p=1&i\in\frac{\ZZ}{2q\ZZ}
			\end{matrix}\rerangle.
			$$
		\end{definition}
		Notice that just as was the case with $G_0$, $c_i$ and $c_{-i}\inv$ get identified.
		Clearly $G_{p,q}$ is a quotient of ${G_{\textrm{Int}}}$ where $t,a_i,c_i$ get mapped to $t,a_i,c_i$ respectively, denote $\varphi_{p,q}$ the quotient map.
		Notice $G_{p,q}$ is a semi-direct product $N_{p,q}\rtimes\frac{\ZZ}{2q\ZZ}$ where the second group acts trivially on the generators $c_i$, and by index shift on the generators $a_i$.
		Notice furthermore that $N_{p,q}$ is $2$-step nilpotent where the derived subgroup $C_{p,q}$ is generated by $\{c_i\mid i\in\frac{\ZZ}{2q\ZZ}\}$.
		\begin{lemma}\label{prop:order}
			The order of $G_{p,q}$ is less than $2qp^{4q}$.
		\end{lemma}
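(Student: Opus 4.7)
The plan is to exploit the semi-direct product decomposition $G_{p,q}=N_{p,q}\rtimes\frac{\ZZ}{2q\ZZ}$, which reduces the bound to showing $\abs{N_{p,q}}\leq p^{4q}$ (with plenty of room to spare). Since $N_{p,q}$ is $2$-step nilpotent, I would proceed by exhibiting a normal form: every element can be written as a product of the form
\[
a_0^{e_0}a_1^{e_1}\cdots a_{2q-1}^{e_{2q-1}}\cdot z,
\]
where $z\in C_{p,q}$, the centre (and derived subgroup) of $N_{p,q}$. This follows by the standard procedure of collecting generators from left to right, using the commutator relations $[a_i,a_j]=c_{i-j}$ to push all commutators into $C_{p,q}$ on the right. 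Since each $a_i$ satisfies $a_i^p=1$, each exponent $e_i$ may be taken in $[0,p-1]$, contributing at most $p^{2q}$ factors.

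The next step is to bound $\abs{C_{p,q}}$. The central subgroup is generated by the $c_i$ for $i\in\frac{\ZZ}{2q\ZZ}$; however, the relations $c_0=1$ and $c_{-i}=c_i\inv$ (obtained, as in $G_0$, from $[a_i,a_i]=c_0$ and swapping indices in $[a_i,a_j]=c_{i-j}$) reduce the generating set to $\{c_1,c_2,\ldots,c_q\}$. Moreover, since $-q\equiv q\bmod 2q$, the relation $c_{-q}=c_q\inv$ forces $c_q^2=1$, and combined with $c_q^p=1$ for odd $p$ this yields $c_q=1$. Hence $C_{p,q}$ is generated by $\{c_1,\ldots,c_{q-1}\}$, each of order dividing $p$, so $\abs{C_{p,q}}\leq p^{q-1}$.

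Putting this together gives
\[
\abs{N_{p,q}}\leq p^{2q}\cdot p^{q-1}=p^{3q-1},
\]
so $\abs{G_{p,q}}\leq 2q\cdot p^{3q-1}<2qp^{4q}$, as desired. The only step requiring any care is the claim that the normal form genuinely enumerates $N_{p,q}$ (rather than merely giving an upper bound through a surjection); but for an upper bound on the order, it suffices to show every element admits such a representation, which is immediate by induction on the length of a word in the $a_i$. The extra relations $c_{d_i}^{q_i}=1$ from the presentation of $G_{\textrm{Int}}$ can only further collapse the group, so they do not affect the inequality.
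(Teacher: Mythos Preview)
Your proof is correct and follows essentially the same approach as the paper: use the semi-direct product $G_{p,q}=N_{p,q}\rtimes\frac{\ZZ}{2q\ZZ}$ and bound $\abs{N_{p,q}}$ via a Mal'cev basis in the $2$-step nilpotent group. The paper is slightly coarser, simply taking all of $\{a_i,c_i\mid i\in\frac{\ZZ}{2q\ZZ}\}$ as a Mal'cev basis of size $\leq 4q$ with each element of order $\leq p$, whereas your extra observations ($c_{-i}=c_i\inv$, $c_q=1$) sharpen the bound to $p^{3q-1}$, but this refinement is not needed for the lemma.
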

		\begin{proof}
			A Malcev basis for $N_{p,q}$ is given by (some subset of the) set $\{a_i,c_i\mid i\in \frac{\ZZ}{2q\ZZ}\}$. All these elements are of order at most $p$ so $N_{p,q}$ is of order at most $p^{4q}$. The result follows as $$G_{p,q}=N_{p,q}\rtimes\frac{\ZZ}{2q\ZZ}.$$
		\end{proof}
		
		We are mainly interested in elements of $C<{G_{\textrm{Int}}}$, we thus describe $C_{p,q}$.
		\begin{lemma}\label{prop:QuotientOfC}
			The group $C_{p,q}$ is given by the free $\frac{\ZZ}{p\ZZ}$-module with basis$$
			\{c_i\mid i\in [1,q-1] \forall j\in\ZZ_{\geq 0}:p\nmid q_j\Rightarrow i\ncong \pm d_j\mod 2q\}.
			$$
		\end{lemma}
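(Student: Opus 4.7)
The plan is to establish this in two steps: an upper bound showing $C_{p,q}$ is a quotient of the claimed free $\frac{\ZZ}{p\ZZ}$-module, and a matching lower bound via an explicit construction of a $2$-step nilpotent group realizing all the defining relations.

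For the upper bound, I will analyse the relations defining $C_{p,q}$. This subgroup is central and abelian, generated by $\{c_i \mid i \in \frac{\ZZ}{2q\ZZ}\}$. The identity $c_{i-j} = [a_i, a_j] = [a_j, a_i]\inv = c_{j-i}\inv$ gives skew-symmetry $c_k = c_{-k}\inv$; in particular $c_0 = 1$ and $c_q^2 = 1$, which combined with $c_q^p = 1$ and $p$ odd yields $c_q = 1$. Hence $C_{p,q}$ is generated by $c_1, \ldots, c_{q-1}$, each of order dividing $p$. For each $j \geq 0$, the relation $c_{d_j}^{q_j} = 1$ is redundant when $p \mid q_j$, but when $p \nmid q_j$, combining it with $c_{d_j}^p = 1$ via Bezout forces $c_{d_j} = 1$; by skew-symmetry this kills every $c_i$ with $i \equiv \pm d_j \mod 2q$. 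This presents $C_{p,q}$ as a quotient of the claimed free $\frac{\ZZ}{p\ZZ}$-module.

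For the matching lower bound, I will construct a $2$-step nilpotent group $\tilde N$ satisfying all the relations of $N_{p,q}$ in which the image of $C_{p,q}$ is the full claimed module. Let $V = (\frac{\ZZ}{p\ZZ})^{2q}$ with basis $e_0, \ldots, e_{2q-1}$, let $\overline C$ be the free $\frac{\ZZ}{p\ZZ}$-module on the claimed basis $S$, and for $k \in \frac{\ZZ}{2q\ZZ}$ define $\bar c_k \in \overline C$ as the basis element $c_k$ when $k \in [1, q-1]$ and $c_k \in S$; as $-\bar c_{2q-k}$ for $k \in [q+1, 2q-1]$; and as $0$ otherwise. Then $\omega(e_i, e_j) := \bar c_{(i-j) \mod 2q}$, extended bilinearly, is a skew-symmetric form $V \times V \to \overline C$. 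Pick any bilinear $\beta: V \times V \to \overline C$ with $\beta(v, v') - \beta(v', v) = \omega(v, v')$ and $\beta(e_i, e_i) = 0$, for instance $\beta(e_i, e_j) = \bar c_{(i-j) \mod 2q}$ for $i > j$ and $0$ otherwise. Let $\tilde N = V \times \overline C$ with product $(v, w)(v', w') = (v + v', w + w' + \beta(v, v'))$; bilinearity of $\beta$ gives associativity, and a direct computation shows $[(v, 0), (v', 0)] = (0, \omega(v, v'))$ and $(e_i, 0)^p = (0, 0)$.

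Sending $a_i \mapsto (e_i, 0)$ and $c_k \mapsto (0, \bar c_k)$ respects every defining relation of $N_{p,q}$: the commutator formula is exactly the computation of $\omega$; $a_i^p = c_i^p = 1$ follow immediately; and $c_{d_j}^{q_j}$ maps to $(0, q_j \bar c_{d_j \mod 2q}) = (0,0)$ because either $p \mid q_j$ (so $q_j \bar c_{d_j \mod 2q} = 0$ by $p$-torsion of $\overline C$), or $p \nmid q_j$ (so $\bar c_{d_j \mod 2q} = 0$ directly by definition of $S$). Hence there is a surjection $N_{p,q} \twoheadrightarrow \tilde N$ restricting to a surjection $C_{p,q} \twoheadrightarrow \overline C$, which combined with the upper bound must be an isomorphism. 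The main obstacle is the lower bound: one must rule out hidden collapses in $C_{p,q}$ arising from the interplay between $a_i^p = 1$ and the commutator relations in the $2$-step nilpotent setting, and the explicit construction of $\tilde N$ is precisely what certifies that no such collapse has occurred.
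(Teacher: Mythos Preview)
Your upper-bound argument is exactly the paper's proof: identify $c_i$ with $c_{2q-i}^{-1}$ for $i\in[q+1,2q-1]$, kill $c_0$ and $c_q$, and observe that the relation $c_{d_j}^{q_j}=1$ is redundant when $p\mid q_j$ and forces $c_{d_j}=1$ (via $\gcd(p,q_j)=1$) otherwise. The paper stops there, tacitly taking for granted that in a $2$-step nilpotent presentation of this shape no further relations among the $c_i$ can arise from the relations $a_i^p=1$.

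You go further and actually justify this tacit step by building the Heisenberg-type central extension $\tilde N=V\times\overline C$ with cocycle $\beta$ and exhibiting the surjection $N_{p,q}\twoheadrightarrow\tilde N$. That construction is correct: bilinearity of $\beta$ gives associativity, $\beta(e_i,e_i)=0$ ensures $(e_i,0)^p=(pe_i,0)=0$, and your case analysis for $c_{d_j}^{q_j}\mapsto 0$ is right. So your proof is both correct and strictly more complete than the paper's; the extra work buys you an honest lower bound that the paper leaves to the reader's familiarity with free $2$-step nilpotent quotients.
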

		\begin{proof}
			Notice that all generators $c_i$ with $i\in[q+1,2q-1]$ can be identified with $-c_{2q-i}$.
			We use multiplicative notation in this proof.
			Suppose that $p\mid q_j$, then the relation $c_{d_j}^{q_j}$ becomes redundant. Suppose on the other hand that $p\nmid q_j$, then we have $c_{d_j}^p=1=c_{d_j}^{q_j}$ for $p$ and $q_j$ coprime. It follows that $c_{d_j}=1$. 
		\end{proof}
		We define a closely related module.
		\begin{definition}
			Denote $\tilde C_{p,q}$ the free $\frac{\ZZ}{p\ZZ}$-module with basis $$
			\{c_i\mid i\in [-q+1,-1]\cup[1,q-1] \forall j\in\ZZ_{\geq 0}:p\nmid q_j\Rightarrow i\ncong \pm d_j\mod 2q\}.
			$$
			Denote $\delta:C_{p,q}\rightarrow\tilde C_{p,q}$ the morphism mapping $c_i$ to $c_i-c_{-i}$.
		\end{definition}
		For notational convenience, if $c_i$ is not in the above basis, then we take $c_i=0$.
		
		\begin{proposition}The group ${G_{\textrm{Int}}}$ is residually finite and for any $\epsilon>0$, $\mathrm{Rf}_{{G_{\textrm{Int}}},S}(n)\prec e^{x^{\frac{1}{2}+\epsilon}}$.
		\end{proposition}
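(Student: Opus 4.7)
The plan is, given a nontrivial $g\in G_{\textrm{Int}}$ of word norm at most $n$, to exhibit a quotient $\varphi_{p,q'}:G_{\textrm{Int}}\to G_{p,q'}$ with $\varphi_{p,q'}(g)\neq 1$ and $\log|G_{p,q'}|\leq n^{\frac12+\epsilon}$. First I would reduce to the case $g\in C$: if the image of $g$ in $\ZZ\wr\ZZ$ is nontrivial, then \Cref{prop:rflamplighter} supplies a finite quotient of only polynomial size, well within the required bound. Assuming $g\in C$, \Cref{prop:supportC} lets us write $g=\sum_{i=1}^{n}\gamma_i c_i$ with $\sum_i|\gamma_i|<n^2$, and I would encode this by the antisymmetric Laurent polynomial $f(X)=\sum_i\gamma_i(X^i-X^{-i})\in\ZZ[X,X^{-1}]$ of degree in $[-n,n]$. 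The description of $\tilde C_{p,q'}$ from \Cref{prop:QuotientOfC} supplies a dictionary: $\delta\varphi_{p,q'}(g)=0$ exactly when $f$, reduced modulo $p$ and modulo $X^{2q'}-1$, lies in the $\ZZ$-span of the monomials $X^{\pm d_j}$ for those $j\geq 0$ with $p\nmid q_j$.

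I would then choose the prime $p$ so that $\log p$ is much smaller than $n^{1/2}$. If some $\gamma_i\neq 0$ with $i\notin\{d_j\}_{j\geq 0}$, pick $p$ minimal with $p\nmid\gamma_i$, giving $p=O(\log n)$. Otherwise $g$ is concentrated on the torsion elements $c_{d_j}$ with $j\leq J:=\max\{k:d_k\leq n\}$, and $g\neq 0$ in $C$ forces $q_j\nmid\gamma_{d_j}$ for some such $j$; at least one of the nine primes $P_{5j-5},\ldots,P_{5j+11}$ dividing $q_j$ then fails to divide $\gamma_{d_j}$, and I take $p$ to be such a prime. This gives $p\leq P_{5J+11}\leq \mf f^{23}(\tilde d_J)\leq\mf f^{23}(n)$, and by the second half of \Cref{prop:fIsIntermediate}, $\log p\leq n^{\epsilon/3}$ eventually.

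Next, the set of forbidden indices contains at most $2(J+1)=O(\log^{*}n)$ values $\pm d_j$ in $[-n,n]$. I would select a positive even integer $m_0$ of size at most $n^{o(1)}$ such that these forbidden indices are pairwise distinct modulo $m_0$: each pair $(d_j,d_k)$ collides modulo at most $\tau(|d_j-d_k|)=n^{o(1)}$ values, so a counting argument over a slightly larger range produces such an $m_0$. Applying \Cref{prop:laurentCenterReduct} to $f$ with this $m_0$ then yields an odd prime $q\leq cn^{\frac12+\epsilon/3}$ with $f\notin M+I_q$. Setting $q':=\lcm(q,m_0)/2$, one has $\varphi_{p,q'}(g)\neq 1$ in $G_{p,q'}$ with $q'\leq qm_0\leq n^{\frac12+\epsilon/2}$.

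Finally \Cref{prop:order} bounds $|G_{p,q'}|\leq 2q'p^{4q'}$, so $\log|G_{p,q'}|\leq 4q'\log p+\log(2q')\leq n^{\frac12+\epsilon}$ for $n$ sufficiently large; residual finiteness follows since a finite quotient is always produced. The main obstacle I anticipate is the careful matching between the algebraic reduction in $C_{p,q'}$ and the Laurent-polynomial hypothesis of \Cref{prop:laurentCenterReduct}: because of the identifications $c_{-i}=-c_i$ and $c_0=c_{q'}=0$, one must work inside the antisymmetrised module $\tilde C_{p,q'}$ via the morphism $\delta$, and verify that antisymmetrising $f$ does not introduce spurious cancellations that would force $f\in M+I_q$ despite $\varphi_{p,q'}(g)\neq 0$.
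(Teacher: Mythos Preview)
Your overall plan mirrors the paper's, but there is a genuine gap in the order of operations, and it is not the antisymmetrisation issue you flag at the end.

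You fix the prime $p$ first (from an unfolded coefficient $\gamma_i$ or $\gamma_{d_j}$) and only afterwards invoke \Cref{prop:laurentCenterReduct} to obtain $q$. That lemma, however, works over $\ZZ$: it guarantees $f\notin M+I_q$, not $f\notin M+I_q+p\ZZ[X,X^{-1}]$. The coefficient that witnesses $f\notin M+I_q$ is some $\overline\gamma_m=\sum_{j\equiv m\,(2q')}\gamma_j$, a sum depending on $q$, and there is no reason it should be prime to the $p$ you chose in advance. Concretely, in your first sub-case you pick $p\nmid\gamma_i$ for one index $i$, but after folding modulo $X^{2q'}-1$ the witness index $m$ need not be $i$, and even if it is, $\overline\gamma_m$ may acquire extra summands divisible by $p$. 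The paper breaks this circularity by reversing the order: it fixes $M$ to be spanned by \emph{all} $X^{\pm d_j}$ with $d_j<n$ (independently of $p$), applies \Cref{prop:laurentCenterReduct} to get $q$ and the witness $\overline\gamma_m$, and only then selects $p$ from among the nine primes dividing $q_k$ so that $p>|\overline\gamma_m|$. This is exactly why $q_k$ was engineered to contain primes ranging up past $d_k^3$; your argument never uses that feature.

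There is a second, related gap. Your $M$ contains only those $\pm d_j$ with $d_j\le n$, but \Cref{prop:QuotientOfC} kills $c_i$ whenever $i\equiv\pm d_{j'}\pmod{2q'}$ for \emph{any} $j'$ with $p\nmid q_{j'}$, including arbitrarily large $j'$. You give no reason why your witness $m$ avoids these residues. The paper handles this by taking $m_0=2^{k+1}$ rather than a generic modulus found by divisor-counting: since $q\le c\sqrt{d_k}$ is odd, \Cref{prop:dManyFactors} forces $2^{k+2}q\mid d_{k+1}\mid d_{j'}$ for every $j'>k$, so all large $d_{j'}$ collapse to $0\pmod{2q'}$ and hence already lie in $M+I_q$. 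Your counting argument for $m_0$ discards this arithmetic structure and leaves the large-$d_{j'}$ collisions uncontrolled.
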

		\begin{proof}
			Let $g\in {G_{\textrm{Int}}}\without{1}$ be such that $\norm{g}_S\leq n$.
			We assume throughout the proof that $n$ is large, that is in the proof we make a finite number of estimates that each hold for all $n\geq N_0$ for some fixed integer $N_0$. This is sufficient as if $\norm{g}<N_0$, then we still obtain that $\rf_{{G_{\textrm{Int}}},S}(\norm{g})\leq \rf_{{G_{\textrm{Int}}},S}(N_0)$.

			By \Cref{prop:rflamplighter} we may assume that $g\in C$ and thus by $\Cref{prop:supportC}$ we may find $\gamma_i$ for $i\in[1,n]$ with $\abs{\gamma_i}\leq n^2$ such that $g=\sum \gamma_ic_i$ (where we switch to additive notation in the abelian group $C$).
			Let $S(g)=\{i\in[1,n]\mid \gamma_i\neq 0\}$ we distinguish $2$ cases.
			
			First assume that $S(g)\subset\{d_i\mid i\in\ZZ_{\geq1}\}$.
			Let then $d_k$ be minimal such that $\gamma_{d_k}\nmid q_k$. Such $k$ always exists as otherwise $g$ would be trivial. Let now $p\mid q_k$ be minimal such that $p\nmid \gamma_{d_k}$. Such a prime exists as $q_k$ is square free. Either this prime is given by $P_{5k-5}<q_k<n$, the smallest prime divisor of $q_k$ or there exists some maximal $j$ such that $P_j\mid q_k,P_j\mid \gamma_{d_k}$. In this case $P_{j+2}$, which for large $n$ is bounded by $2{\mf f}^4(P_j)\leq 2{\mf f}^4(n^2)$, is not a divisor of $\gamma_{d_k}$. In both cases, $p$ is less than ${\mf f}^5(n)$.
			We claim in this case that $\varphi_{p,2^{k+1}}(g)\neq 0$. Indeed notice $2^{i+1}\mid d_{j}$ for all $j>k$. In particular, $\varphi(\sum \gamma_ic_i)=\gamma_{d_k}c_{d_k}$ which by \Cref{prop:QuotientOfC} is non-trivial.
			We thus have in this case by \Cref{prop:order} that $g$ does not vanish in some quotient of size at most ${2^{k+2}}p^{2^{k+3}}$, which by \Cref{prop:bound2power} is at most $4\log(n){\mf f}^5(n)^{8\log(n)}$ for $n$ sufficiently large.
			In this case, the bound follows.
						
			Now assume that $S(g)\not\subset\{d_i\mid i\in\ZZ_{\geq1}\}$.
			Let $f\in \ZZ[X]$ be the polynomial $$f(X)=\sum_{i\in[1,n]}\gamma_i X^{i}$$
			and let $\tilde f\in\ZZ[X,X\inv]$ be the Laurent-polynomial $$
			\tilde f(X)=\sum_{i\in[1,n]}\gamma_i (X^{i}-X^{-i}).
			$$
			Let $k$ be minimal such that $n<d_k$ and let $l_0,l_1,l_{-1},\cdots l_{k-1},l_{-k+1}$ be given respectively by
			${0,d_1,-d_1,d_2,-d_2,\cdots d_{k-1},-d_{k-1}}$. By looking at the number of factors $2$, notice that all the $l_i$ are distinct $\mod 2^{k+1}$.
			By \Cref{prop:laurentCenterReduct}, for $n$ sufficiently large, we find some odd integer $q\leq c(2n+1)^{\frac{1+\epsilon}{2}}$ such that $\tilde f\notin I_q+M$ for some uniform constant $c$ where 
			$$M=\ZZ X^{0}+\ZZ X^{d_1}+\ZZ X^{-d_{1}}+\cdots+ \ZZ X^{d_{k-1}}+ \ZZ X^{-d_{k-1}}$$ and $I_q$ is the ideal $\langle X^{2^{k+1}q}-1\rangle$. 
			We rewrite $\tilde f+I_q$ as $$
			\sum_{i\in\frac{\ZZ}{2^{k+1}q\ZZ}}\overline\gamma_iX^{i}\in \frac{\ZZ[X,X\inv]}{I_q}.
			$$
			We find some non-zero $\overline\gamma_m$ with $m\ncong 0,\pm d_1,\cdots \pm d_{k-1}\mod 2^{k+1}q$. Notice that $\overline\gamma_m$ is given by $$
			\sum_{\substack{j=m+i2^{k+1}q,i\in\ZZ\\ j\in[-n,n]}}\gamma_j
			$$
			where we use the convention that $\gamma_{-j}=-\gamma_j$.
			We thus have that $\overline\gamma_m$ is bounded above by $2n^3$.
			Let $P_j$ be the least divisor of $q_k$ such that $P_j>\abs{\overline\gamma_m}$.
			Notice that $P_{5k+11}>2d_k^3$ is a divisor of $q_k$ thus such a divisor exists. Furthermore, if $n$ is large, then this divisor is bounded above by $2{\mf f}^4(2n^3)$, indeed $P_{5k-5}$, the least prime divisor of $q_k$, is bounded above by  $2{\mf f}(d_{k-1})\leq 2\mf f(2n)$ and the prime divisors of $q_k$ lie at most $2{\mf f}^4$ apart.
			As $P_j>\abs{\overline\gamma_m}$, we have that $\tilde f\notin M+I_q+P_j\ZZ[X,X\inv]$.
			
			Consider the following morphisms of abelian groups: $$\pi:\ZZ[x]\rightarrow C:X^{i}\mapsto c_i,$$ $$\delta:\ZZ[x]\rightarrow\ZZ[X,X\inv]:X^{i}\mapsto X^{i}-X^{-i}$$and $$
			\pi_{P_j,2^{k+1}q}:\ZZ[X,X\inv]\rightarrow \tilde C_{p,2^{k+1}q}:X^{i}\mapsto c_{\overline i}
			$$
			where $\overline i\in[-2^{k+1}q,2^{k+1}q]$ is such that $i\cong\overline i\mod2^{k+2}q$
			Notice that given these morphisms, the following diagram commutes.
			
			\[\begin{tikzcd}
				{\ZZ[X]} && {\ZZ[X,X\inv]} \\
				C & {C_{p,2^{k+1}q}} & {\tilde C_{p,2^{k+1}q}}
				\arrow["\delta", from=1-1, to=1-3]
				\arrow["\pi"', from=1-1, to=2-1]
				\arrow["{\pi_{P_j,2^{k+1}q}}"', from=1-3, to=2-3]
				\arrow["{\varphi_{P_j,2^{k+1}q}}"', from=2-1, to=2-2]
				\arrow["\delta"', from=2-2, to=2-3]
			\end{tikzcd}\]
			
			Notice that the kernel of $\pi_{P_j,2^{k+1}q}$ is contained in $M+I_q+P_j\ZZ[X]$. In particular we have $\pi_{P_j,2^{k+1}q}(\tilde f)\neq 0$ or thus $(\pi_{P_j,2^{k+1}q}\comp\delta)(f)\neq 0$. By commutativity, we have that ${(\delta\comp\varphi_{P_j,2^{k+1}q}\comp\pi)(f)}$ is non-trivial and thus is $\varphi_{P_j,2^{k+1}q}(\pi(f))=\varphi_{P_j,2^{k+1}q}(g)$ non-trivial.
			We thus have that $g$ does not vanish in the quotient $G_{p,2^{k+1}q}$ which by \Cref{prop:order} is of order at most $2^{k+2}q(P_j)^{2^{k+3}q}$. By the above, and by \Cref{prop:bound2power}, this can be bounded from above for large $n$ by$$
						4c\log(n)(2n+1)^{\frac{1+\epsilon}{2}}(2{\mf f}^4(2n^3))^{8c\log(n)(2n+1)^{\frac{1+\epsilon}{2}}}.
			$$
			Again the upper bound follows.

		\end{proof}
		Combining the results of this section we obtain the following for ${G_{\textrm{Int}}}$
		\setcounter{result}{3}
		\begin{result}\label{prop:ExistsIntermediate}
			There exists a finitely generated recursively presented $3$-step solvable group with intermediate residual finiteness growth.
		\end{result}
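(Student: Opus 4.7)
The plan is to assemble \Cref{prop:ExistsIntermediate} as a direct corollary of the two propositions just proved for $G_{\textrm{Int}}$, together with a few routine verifications of the remaining properties in the statement.

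First I would verify the structural properties of $G_{\textrm{Int}}$. Finite generation by $S=\{t,a_0\}$ follows exactly as for $G_0$, since the relations $ta_it\inv=a_{i+1}$ express every $a_i$ as $t^ia_0t^{-i}$ and then $c_{i-j}=[a_i,a_j]$. Three-step solvability is inherited from $G_0$: since $G_{\textrm{Int}}\in\cat H$ is a quotient of $G_0=N_0\rtimes\ZZ$ with $N_0$ two-step nilpotent (hence two-step solvable), $G_0$ and all its quotients are three-step solvable. For the recursive presentation, I would check that the sequences $d_n$ and $q_n$ appearing in the relations $c_{d_i}^{q_i}=1$ are computable: $d_n$ is built inductively by multiplying $d_{n-1}$ by small primes until landing in the prescribed window (an effective search using Bertrand's postulate), and $P_n$ is an integer approximation of $\mf f^{2n}(\mf f(d_0))$, which is computable to arbitrary precision so long as the chosen compositional root $\mf f$ is computable. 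A computable $\mf f$ is supplied by the explicit piecewise construction at the start of Section~\ref{sec:intermediate}, so the defining set of relations is recursive.

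Next I would combine the two quantitative estimates to conclude intermediate residual finiteness growth. The lower bound $\rf_{G_{\textrm{Int}}}(n)\succ \mf f^2(n)$ is already established, and by \Cref{prop:fIsIntermediate} the function $\mf f^2$ eventually dominates every polynomial (since even $\mf f$ itself does). Hence $\rf_{G_{\textrm{Int}}}$ is strictly super-polynomial. The upper bound $\rf_{G_{\textrm{Int}},S}(n)\prec e^{n^{\frac{1}{2}+\epsilon}}$, valid for every $\epsilon>0$, is in particular strictly sub-exponential, so $\rf_{G_{\textrm{Int}}}$ lies strictly between the polynomials and the exponentials.

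There is no real obstacle here beyond bookkeeping: the hard analytic work has already been carried out in the preceding two propositions, and the remaining task is only to observe that the constructed $G_{\textrm{Int}}$ satisfies the qualitative hypotheses (finite generation, recursive presentation, three-step solvability) and to cite \Cref{prop:fIsIntermediate} to compare $\mf f^2$ with the classes of polynomial and exponential functions. If there is a subtle point, it is ensuring that the specific compositional root $\mf f$ chosen in the construction is genuinely computable so that the relations $c_{d_i}^{q_i}=1$ form a recursive set; this is handled by fixing once and for all the explicit $\mf f$ built from the piecewise linear-plus-exponential template in the Compositional roots subsection.
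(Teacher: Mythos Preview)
Your proposal is correct and follows exactly the paper's approach: the paper simply states that \Cref{prop:ExistsIntermediate} is obtained by ``combining the results of this section'' for $G_{\textrm{Int}}$, and you have spelled out precisely those routine verifications (finite generation, $3$-step solvability, recursive presentation via computability of $\mf f$, $d_n$, $q_n$) together with the appeal to \Cref{prop:fIsIntermediate} to place $\mf f^2$ and $e^{n^{1/2+\epsilon}}$ strictly between the polynomial and exponential regimes. If anything, your write-up is more explicit than the paper's one-line assembly.
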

		One wonders if the same holds for the conjugacy separability, that is
		\begin{question}
			Does there exist a finitely generated recursively presented group with intermediate conjugacy separability growth?
		\end{question}
		One might expect that finding such a group, if it exists, is significantly more difficult. As far as the author is aware, all groups for which the conjugacy separability growth is known to be sub-exponential are virtually nilpotent, and by \cite{dere2019effective} these groups have polynomial conjugacy separability growth.

		\bibliographystyle{plain}
		\bibliography{bibliography}
\end{document}